\documentclass[11pt]{article}

\usepackage[margin=1.05in]{geometry}
\usepackage{amsmath,amssymb,amsthm,mathtools}
\usepackage{mathrsfs}
\usepackage{bm}
\usepackage{enumitem}
\usepackage[colorlinks=true,linkcolor=blue,citecolor=blue,urlcolor=blue]{hyperref}

\numberwithin{equation}{section}

\newcommand{\R}{\mathbb{R}}
\newcommand{\N}{\mathbb{N}}

\newcommand{\Hs}{\mathcal{H}_{p,\mu}}
\newcommand{\Ls}{\mathcal{L}_{s}}

\newcommand{\supp}{\operatorname{supp}}
\newcommand{\loc}{\mathrm{loc}}
\newcommand{\PV}{\mathrm{P.V.}}

\newtheorem{theorem}{Theorem}[section]
\newtheorem{lemma}[theorem]{Lemma}
\newtheorem{proposition}[theorem]{Proposition}

\theoremstyle{definition}

\theoremstyle{remark}
\newtheorem{remark}[theorem]{Remark}

\title{Isolated Singularities for Fractional Hartree Equations}

\author{Guangze Gu, Aleks Jevnikar, Zhipeng Yang\thanks{Corresponding author: yangzhipeng326@163.com}}

\date{}

\hypersetup{
	pdftitle={Isolated Singularities for Fractional Hartree Equations},
	pdfauthor={Guangze Gu, Aleks Jevnikar, Zhipeng Yang}
}

\AtEndDocument{%
	\par
	\bigskip
	\bigskip
	
	\noindent
	\textbf{Guangze Gu}\\[0.2em]
	\textsc{Department of Mathematics, Yunnan Normal University, Kunming 650500, China}\\
	\textsc{Yunnan Key Laboratory of Modern Analytical Mathematics and Applications, Kunming 650500, China}\\[0.3em]
	\textit{E-mail address}: \texttt{guangzegu@163.com}
	
	\bigskip
	
	\noindent
	\textbf{Aleks Jevnikar}\\[0.2em]
	\textsc{Department of Mathematics, Computer Science and Physics, University of Udine, Udine 33100, Italy}\\[0.3em]
	\textit{E-mail address}: \texttt{aleks.jevnikar@uniud.it}
	
	\bigskip
	
	\noindent
	\textbf{Zhipeng Yang}\\[0.2em]
	\textsc{Department of Mathematics, Yunnan Normal University, Kunming 650500, China}\\
	\textsc{Yunnan Key Laboratory of Modern Analytical Mathematics and Applications, Kunming 650500, China}\\[0.3em]
	\textit{E-mail address}: \texttt{yangzhipeng326@163.com}
}

\begin{document}
	
	\maketitle
	
	\begin{abstract}
		We study isolated singularities of positive solutions to a fractional Hartree equation with Riesz interaction,
		\[
		(-\Delta)^s u
		=
		\left(
		\int_{\mathbb{R}^N\setminus\{0\}}
		\frac{u^p(y)}{|x-y|^\mu}\,dy
		\right)u^q
		\quad \text{in } \mathbb{R}^N\setminus\{0\}.
		\]
		The puncture changes the passage from the differential equation to its integral form: a fractional fundamental-solution term may occur at the singular point. For non-removable blow-up singularities satisfying a fundamental-order upper bound and a weighted source condition, we derive the corresponding Riesz decomposition and retain its nonnegative singular term in an off-center Kelvin moving-spheres argument. In the range determined by two nonnegative Kelvin weights, this yields radial symmetry and strict radial monotonicity. We also construct and classify positive radial homogeneous singular solutions in the corresponding convergence regime. If a nonzero positive radial homogeneous scaling limit is independently known to exist and to solve the limiting equation, then its coefficient is uniquely determined.
	\end{abstract}
	
	\noindent\textbf{Keywords:}
	Fractional Hartree equation; isolated singularity; moving spheres.
	
	\noindent\textbf{Mathematics Subject Classification 2020:}
	35R11; 35B40; 45G05.
	
	\section{Introduction and main results}
	\label{sec:intro}
	
	We study isolated singularities of positive solutions to the fractional Hartree equation
	\begin{equation}
		\label{eq1.1}
		(-\Delta)^s u
		=
		\left(
		\int_{\R^N\setminus\{0\}}
		\frac{u^p(y)}{|x-y|^\mu}\,dy
		\right)u^q
		\quad \text{in } \R^N\setminus\{0\},
	\end{equation}
	where
	\[
	0<s<1,\qquad N>2s,\qquad 0<\mu<N,\qquad p,q>0.
	\]
	Throughout the paper, we write
	\[
	\Hs[u](x)
	=
	\int_{\R^N\setminus\{0\}}
	\frac{u^p(y)}{|x-y|^\mu}\,dy
	\]
	for the Hartree potential generated by \(u\). Equation~\eqref{eq1.1} then takes the compact form
	\[
	(-\Delta)^s u=\Hs[u]u^q
	\quad \text{in } \R^N\setminus\{0\}.
	\]
	We use the term Hartree equation for the general interaction \(\Hs[u]u^q\), in which the power inside the potential and the exterior power are allowed to be independent. The standard Choquard equation is recovered, up to lower-order linear terms, in the variational subcase.
	
	To relate \eqref{eq1.1} to the usual Riesz-potential notation, we use the convention
	\[
	I_\eta(x)=a_{N,\eta}|x|^{\eta-N},
	\qquad 0<\eta<N.
	\]
	When the parameter denotes the denominator exponent, we write
	\[
	K_\mu(x)=|x|^{-\mu}
	=a_{N,N-\mu}^{-1}I_{N-\mu}(x).
	\]
	Thus the interaction in \eqref{eq1.1} is generated by \(K_\mu\). Hartree and Choquard equations form a standard class of nonlinear nonlocal elliptic problems. In the local-diffusion setting, the classical variational Choquard equation
	\[
	-\Delta u+u
	=
	\left(I_\eta*|u|^p\right)|u|^{p-2}u
	\quad \text{in }\R^N
	\]
	goes back to the work of Lieb and Lions. Its qualitative theory includes existence, positivity, regularity, symmetry, and decay of ground states; see \cite{Lieb1977,Lions1980,MorozVanSchaftingen}. For fractional diffusion, d'Avenia, Siciliano and Squassina \cite{dAveniaSicilianoSquassina} established foundational existence, regularity, symmetry, and decay results. Whole-space Liouville and classification results for fractional Choquard and static Hartree equations were obtained in \cite{Le2019,DaiFangQin,DaiHuangQinWangFang,DaiLiuQin}.
	
	The presence of a puncture introduces a different set of questions. In the local semilinear theory, point-mass decompositions, removability, asymptotic symmetry, and singular-profile classification were developed in \cite{BrezisLions,BrezisVeron,CaffarelliGidasSpruck,VazquezVeron}. For fractional semilinear equations, Caffarelli, Jin, Sire and Xiong \cite{CaffarelliJinSireXiong} proved local asymptotic symmetry at an isolated singularity. Montoro, Punzo and Sciunzi \cite{MontoroPunzoSciunzi2018} treated qualitative properties in domains with zero-capacity singular sets, whereas Le \cite{Le2020} studied radial symmetry for a weighted singular Choquard equation driven by the fractional \(p\)-Laplacian under a different operator, weight, and asymptotic framework.
	
	B\^ocher and Liouville mechanisms for fractional equations provide the natural background for identifying a point defect. Li, Wu and Xu \cite{LiWuXu2018} and Li, Liu, Wu and Xu \cite{LiLiuWuXu2020} proved B\^ocher-type results for nonnegative fractional superharmonic or fractional-harmonic solutions near an isolated point. The global theorem of Chen, D'Ambrosio and Li \cite[Theorem~1.3]{ChenDAmbrosioLi2015} identifies a distributional \(2s\)-harmonic function in the corresponding weighted \(L^1\) class as an affine function, with only a constant allowed when \(2s\le1\). Kim and Lee \cite{KimLee2025} obtained more recent B\^ocher and Liouville theorems under explicit one-sided assumptions. These results describe the general mechanism, but they do not apply directly to the sign-changing remainder that arises below. We therefore include the required point-supported-distribution and Fourier arguments in Section~\ref{sec:prelim}.
	
	Singular fundamental components are also part of the established theory for local Choquard and Hartree equations. Chen and Zhou \cite{ChenZhou2016,ChenZhou2023} classified isolated singularities and studied qualitative properties of singular solutions. Ghergu and Taliaferro \cite{GherguTaliaferro2016} obtained sharp pointwise bounds for Choquard--Pekar inequalities, while Wang \cite{Wang2017} treated the defocusing Hartree equation through a distributional point-mass formulation. Cai, Gu and Jevnikar \cite{CaiGuJevnikar2025} considered a local Hartree equation with a compact zero-capacity singular set and proved symmetry and monotonicity by moving-plane methods. Ghergu and Yu \cite{GherguYu2026} recently studied convolution equations in a punctured ball and obtained sharp fundamental-order behavior in appropriate parameter ranges.
	
	For the critical local Hartree equation, Andrade, Feng, Piccione and Yang \cite[Theorems~1.1--1.2]{AndradeFengPiccioneYang2025} proved radiality and monotonicity in the entire punctured space and sharp local asymptotics in a punctured ball, the latter through genuine blow-up analysis and asymptotic integral moving spheres. Feng, Yang and Zhou \cite{FengYangZhou2026} obtained related asymptotic results for a planar exponential Hartree equation. From a different viewpoint, Boni, Noja and Scandone \cite{BoniNojaScandone2026} related isolated singular solutions of local semilinear equations in dimensions two and three to nonlinear Schr\"odinger operators with point interactions.
	
	Among the whole-space fractional results, the closest algebraic antecedent is the work of Dai, Liu and Qin \cite{DaiLiuQin}. They treated the same independent-power structure and the same two Kelvin weights through a pure integral representation, established the correspondence between the differential and integral formulations, and classified regular entire solutions. Their setting contains neither a puncture nor a point-mass term. In the present problem, the integral representation must instead be derived from the equation on the punctured space, and the resulting fundamental-solution component must be retained throughout an off-center Kelvin comparison.
	
	The specific issue addressed here is that the Hartree interaction produces a nonlocal defect involving a second convolution after reflection. We show that its positive part is generated only by the set on which the Kelvin comparison fails. This localization yields a small-set estimate and a narrow-region principle compatible with the nonnegative fundamental-solution term.
	
	Our results proceed at three levels. First, the punctured equation determines a Riesz decomposition with a nonnegative point mass, without any asymptotic condition on the Riesz remainder at infinity. Second, this representation can be incorporated into an off-center moving-spheres argument to obtain radial symmetry and strict radial decrease. Third, the equation can be classified explicitly within the positive radial homogeneous class. The last result is confined to the prescribed homogeneous ansatz and does not constitute a general asymptotic classification of isolated singularities.
	
	Before stating the main results, we record the scaling and Kelvin parameters associated with \eqref{eq1.1}. When \(p+q\ne1\), its natural scaling is
	\[
	u_\rho(x)=\rho^\alpha u(\rho x),
	\]
	where
	\begin{equation}
		\label{eq1.2}
		\alpha
		=
		\frac{N-\mu+2s}{p+q-1}.
	\end{equation}
	The fundamental-solution balance \(\alpha=N-2s\) is equivalent to
	\[
	p+q=\frac{2N-\mu}{N-2s}.
	\]
	This balance should not be confused with exact Kelvin covariance, which requires the simultaneous vanishing of
	\[
	\sigma_p=2N-\mu-(N-2s)p,
	\qquad
	\sigma_q=N+2s-\mu-(N-2s)q.
	\]
	It therefore occurs only at
	\[
	p=\frac{2N-\mu}{N-2s},
	\qquad
	q=\frac{N+2s-\mu}{N-2s}.
	\]
	At this pair, the scaling exponent in \eqref{eq1.2} is \(\alpha=(N-2s)/2\).
	
	For the moving-spheres theorem, we work in the Kelvin-monotone range
	\begin{equation}
		\label{eq1.3}
		p,q\ge1,
		\qquad
		p\le \frac{2N-\mu}{N-2s},
		\qquad
		q\le \frac{N+2s-\mu}{N-2s}.
	\end{equation}
	Equivalently, \eqref{eq1.3} consists of \(p,q\ge1\) together with
	\[
	\sigma_p\ge0,
	\qquad
	\sigma_q\ge0.
	\]
	
	We next fix the functional setting and normalization used in the decomposition theorem. Set
	\[
	\Ls(\R^N)
	=
	\left\{
	u\in L^1_{\loc}(\R^N):
	\int_{\R^N}\frac{|u(x)|}{1+|x|^{N+2s}}\,dx<+\infty
	\right\}.
	\]
	We use the Fourier convention
	\[
	\widehat f(\xi)=\int_{\R^N}e^{-ix\cdot\xi}f(x)\,dx,
	\qquad
	\widehat{(-\Delta)^s\varphi}(\xi)
	=|\xi|^{2s}\widehat{\varphi}(\xi),
	\]
	and set
	\[
	c_{N,s}
	=
	\frac{\Gamma\left(\frac{N-2s}{2}\right)}
	{2^{2s}\pi^{N/2}\Gamma(s)},
	\qquad
	\Phi_s(x)=c_{N,s}|x|^{-(N-2s)}.
	\]
	Then
	\[
	(-\Delta)^s\Phi_s=\delta_0
	\quad\text{in }\mathcal D'(\R^N).
	\]
	We call \(u\) a positive punctured solution of \eqref{eq1.1} if
	\[
	u\in \Ls(\R^N)\cap C^{2s+\beta}_{\loc}(\R^N\setminus\{0\})
	\]
	for some \(\beta>0\), with \(2s+\beta\notin\N\), if \(u>0\) in \(\R^N\setminus\{0\}\), if \(\Hs[u]\) is finite at every point of \(\R^N\setminus\{0\}\), and if \eqref{eq1.1} holds pointwise there. The condition \(u\in\Ls(\R^N)\) is included so that the fractional Laplacian is well defined in the standard whole-space framework; see \cite{Silvestre2006}.
	
	Our first theorem identifies the distributional defect created by the puncture.
	
	\begin{theorem}
		\label{Thm1.1}
		Let
		\[
		0<s<1,\qquad N>2s,\qquad 0<\mu<N,\qquad p,q>0.
		\]
		Let \(u\) be a positive punctured solution of \eqref{eq1.1}, and set
		\[
		F=\Hs[u]u^q.
		\]
		After assigning any value to \(F\) at the origin, assume that the Hartree source satisfies
		\begin{equation}
			\label{eq:source-condition}
			\int_{\R^N}
			\frac{F(y)}{1+|y|^{N-2s}}\,dy<+\infty.
		\end{equation}
		Assume also that the singularity is at most of fundamental-solution order:
		\begin{equation}
			\label{eq:fundamental-bound}
			\limsup_{x\to0}|x|^{N-2s}u(x)<+\infty.
		\end{equation}
		Define the Riesz potential of the Hartree source by
		\begin{equation}
			\label{eq:riesz-potential}
			V(x)=c_{N,s}\int_{\R^N}\frac{F(y)}{|x-y|^{N-2s}}\,dy,
			\qquad x\in\R^N\setminus\{0\}.
		\end{equation}
		Then there exists \(m\ge0\) such that
		\begin{equation}
			\label{eq:decomposition}
			u(x)=V(x)+m\Phi_s(x),
			\qquad x\in\R^N\setminus\{0\}.
		\end{equation}
		Consequently,
		\[
		(-\Delta)^s u=F+m\delta_0
		\quad\text{in }\mathcal D'(\R^N).
		\]
	\end{theorem}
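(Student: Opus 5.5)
Our plan is to compare \(u\) with the Riesz potential \(V\) of the source, identify the difference as a distribution supported at the origin, and then use the two growth constraints to remove everything except a nonnegative multiple of \(\Phi_s\).

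\textbf{Step 1: properties of \(V\).} We begin by checking that \(V\) is well defined and has the expected regularity.
\begin{itemize}
\item By \eqref{eq:source-condition}, \(V\) is finite almost everywhere and lies in \(L^1_{\loc}(\R^N)\).
\item Splitting the integral into \(|y|\le 2|x|\) and \(|y|>2|x|\) shows that \(V\in\Ls(\R^N)\).
\item Since \(F\in L^1_{\loc}(\R^N)\), we have \((-\Delta)^sV=F\) in \(\mathcal D'(\R^N)\). To see this, test against \(\varphi\in C_c^\infty\) and use Fubini together with \(\Phi_s*(-\Delta)^s\varphi=\varphi\); the function \((-\Delta)^s\varphi\) decays like \(|x|^{-N-2s}\), which makes the Fubini step legitimate.
\item Because \(F\) is locally H\"older away from \(0\) (as \(u\in C^{2s+\beta}_{\loc}\) and \(\Hs[u]\) is locally H\"older there), \(V\) is \(C^{2s+\beta'}\) near each point \(x_0\neq0\). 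Hence \((-\Delta)^sV=F\) also holds pointwise on \(\R^N\setminus\{0\}\).
\end{itemize}

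\textbf{Step 2: the difference is supported at the origin.} Set \(w=u-V\in\Ls(\R^N)\). Then \((-\Delta)^sw=0\) pointwise in \(\R^N\setminus\{0\}\), and hence in \(\mathcal D'(\R^N\setminus\{0\})\). So \(T:=(-\Delta)^sw\), defined in \(\mathcal D'(\R^N)\) by duality, is supported in \(\{0\}\). The duality makes sense because \((-\Delta)^s\varphi=O(|x|^{-N-2s})\) and \(w\in\Ls\). It follows that \(T=\sum_{|\gamma|\le k}c_\gamma\partial^\gamma\delta_0\).

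Next we exclude the derivatives. The function \(\Phi_\gamma:=\partial^\gamma\Phi_s\sim|x|^{-(N-2s)-|\gamma|}\) solves \((-\Delta)^s\Phi_\gamma=\partial^\gamma\delta_0\). Therefore
\[
h:=w-\sum_\gamma c_\gamma\Phi_\gamma
\]
is \(s\)-harmonic in \(\mathcal D'(\R^N)\), at least where the pairing makes sense. Here we need care: \(\Phi_\gamma\) is not in \(L^1_{\loc}\) for large \(|\gamma|\), so we work with tempered distributions and Fourier transforms. On the Fourier side \(|\xi|^{2s}\widehat h=0\) away from \(\xi=0\), so \(\widehat h\) is supported at the origin and \(h\) is a polynomial. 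Since \(h\) lies in \(\Ls\) locally away from \(0\), and growth at infinity forces degree \(<2s\), \(h\) is affine (constant if \(2s\le1\)).

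Now we use the behaviour at the origin. The positivity \(u>0\) and the bound \eqref{eq:fundamental-bound} give \(|w|\le u+V\). We also need \(V=o(|x|^{-(N-2s)})\) at \(0\), which we expect from dominated convergence using \(F\in L^1_{\loc}\) near \(0\); this step needs care and may require an averaged version of the estimate. Together these say that \(|x|^{N-2s}w\) is bounded near \(0\). The terms \(\Phi_\gamma\) with \(|\gamma|\ge1\) are homogeneous of strictly higher order and are linearly independent. Averaging \(w\) against suitable spherical test functions on shells \(|x|\sim r\) as \(r\to0\) therefore kills every \(c_\gamma\) with \(|\gamma|\ge1\).

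\textbf{Step 3: removing \(h\) and signing \(m\).} At this point \(u=V+m\Phi_s+h\) with \(h\) affine. To see that \(h\equiv0\) we use the structure of \(u\). We have \(u\ge0\), and \(V\) and \(\Phi_s\) are \(o(|x|)\) at infinity in an averaged sense. If \(h\) had a nonzero linear part, then \(u\) would be negative somewhere far out, contradicting positivity. So \(h\) is a constant \(c\ge0\).

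The main obstacle is to rule out a positive constant \(c\). Our plan is to use the source condition: if \(u\ge c>0\) near infinity, then \(\Hs[u]\ge c^p\int_{|y|>R}|x-y|^{-\mu}\,dy=+\infty\) because \(\mu<N\), contradicting finiteness of \(\Hs[u]\). Hence \(c=0\).

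Finally, \(m\ge0\). If \(m<0\), then near the origin \(u=V+m\Phi_s\) would be negative, since \(V=o(\Phi_s)\) there at least along a sequence or in average, contradicting \(u>0\). The distributional identity \((-\Delta)^s u=F+m\delta_0\) then follows from Step 1 and \((-\Delta)^s\Phi_s=\delta_0\).
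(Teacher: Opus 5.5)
Your proposal is correct and follows essentially the paper's route: the Riesz potential \(V\), the point-supported distribution \((-\Delta)^s(u-V)\), a Fourier argument reducing \(u-V\) to \(m\Phi_s\) plus derivatives of \(\Phi_s\) plus a polynomial of degree \(<2s\), then behaviour at the origin, positivity, growth and finiteness of \(\Hs[u]\) to discard everything except \(m\Phi_s\), with \(m\ge0\) coming from the vanishing annular averages of \(|x|^{N-2s}V\) at \(0\). Two of your choices are harmless tactical variants: you kill all derivative terms at once with a two-sided annular-average bound \(|u-V|\le u+V\), where the paper uses \(L^1_{\loc}\) plus a one-sided cone bound, and you exclude the linear part by positivity, where the paper uses divergence of \(\Hs[u]\).

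The one step to tighten is the assertion that the constant satisfies \(c\ge0\). This needs \(\frac1{|A_R|}\int_{A_R}V\,dx\to0\) on the annuli \(A_R=\{R<|x|<2R\}\) as \(R\to\infty\); the bound \(o(|x|)\) you invoke only handles the linear part. The required decay follows from the weighted source condition by Tonelli and dominated convergence, exactly as in the paper. Likewise, your claim that \(|x|^{N-2s}(u-V)\) is bounded near \(0\) should be stated in annular-average form, since no pointwise bound on \(V\) near the origin is available at that stage.
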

	
	The coefficient \(m\) in Theorem~\ref{Thm1.1} is the mass of the distributional defect; in particular, it is not the coefficient of an unnormalized power. More importantly, the representation \eqref{eq:decomposition} is derived from the punctured equation rather than imposed as a separate integral formulation. It is the starting point for the symmetry argument.
	
	For \(x\ne0\) and \(0<\lambda<|x|\), consider the Kelvin transform
	\[
	u_{x,\lambda}(y)
	=
	\left(\frac{\lambda}{|y-x|}\right)^{N-2s}
	u\left(x+\frac{\lambda^2(y-x)}{|y-x|^2}\right).
	\]
	Under this transformation, the Riesz-potential part of \eqref{eq:decomposition} produces the weights \(\sigma_p\) and \(\sigma_q\), while the term \(m\Phi_s\) contributes with the favorable nonnegative sign. To ensure that the limiting sphere reaches the puncture, we additionally impose the non-removable blow-up condition
	\begin{equation}
		\label{eq:blowup}
		\lim_{x\to0}u(x)=+\infty.
	\end{equation}
	
	\begin{theorem}
		\label{Thm1.2}
		Assume the hypotheses of Theorem~\ref{Thm1.1}. Suppose in addition that
		\eqref{eq1.3} and \eqref{eq:blowup} hold. Then \(u\) is radially symmetric
		about the origin. More precisely, there exists a positive function
		\(U:(0,+\infty)\to(0,+\infty)\) such that
		\[
		u(x)=U(|x|)
		\quad \text{for every }x\in\R^N\setminus\{0\},
		\]
		and \(U\) is strictly decreasing.
	\end{theorem}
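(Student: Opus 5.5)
We argue by the method of moving spheres centered at points \(x\ne0\), working with the integral representation \eqref{eq:decomposition} from Theorem~\ref{Thm1.1}. Fix \(x\in\R^N\setminus\{0\}\) and set \(w_\lambda=u_{x,\lambda}-u\) on \(\Sigma_\lambda=\{y:|y-x|>\lambda\}\setminus\{0\}\) for \(0<\lambda<|x|\); note that the reflected puncture \(0^{x,\lambda}=x+\lambda^2(0-x)/|x|^2\) lies inside \(B_\lambda(x)\).

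\textbf{Step 1: comparison identity.} Using the change of variables \(z\mapsto z^{x,\lambda}\) and the identity \(|y^{x,\lambda}-z^{x,\lambda}|=\lambda^2|y-z|/(|y-x||z-x|)\), we obtain for \(y\in\Sigma_\lambda\)
\[
u_{x,\lambda}(y)-u(y)=c_{N,s}\int_{\Sigma_\lambda}\Bigl(\frac{1}{|y-z|^{N-2s}}-\frac{(\lambda/|y-x|)^{N-2s}}{|y^{x,\lambda}-z|^{N-2s}}\Bigr)\bigl(F_{x,\lambda}(z)-F(z)\bigr)dz+m\bigl(\Phi_{s,x,\lambda}-\Phi_s\bigr)(y),
\]
where \(F_{x,\lambda}(z)=(\lambda/|z-x|)^{N+2s}F(z^{x,\lambda})\). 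The kernel in brackets is positive on \(\Sigma_\lambda\). Moreover \(\Phi_{s,x,\lambda}(y)=c(\lambda/|x|)^{N-2s}|y-0^{x,\lambda}|^{-(N-2s)}\), so a direct check shows \(\Phi_{s,x,\lambda}-\Phi_s\ge0\) on \(\Sigma_\lambda\); this is the favorable sign. The same inversion applied to \(\Hs[u]\) gives
\[
F_{x,\lambda}(z)=\Bigl(\frac{\lambda}{|z-x|}\Bigr)^{\sigma_q}\Bigl[\int\frac{(\lambda/|w-x|)^{\sigma_p}u_{x,\lambda}^p(w)}{|z-w|^\mu}dw\Bigr]u_{x,\lambda}^q(z).
\]
By \eqref{eq1.3}, both weights are \(\ge1\) on \(\Sigma_\lambda\). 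Splitting the \(w\)-integral into \(\Sigma_\lambda\) and its reflection and using \(p,q\ge1\) with the mean value theorem, we show that the negative part of \(F_{x,\lambda}-F\) on \(\Sigma_\lambda^-=\{w_\lambda<0\}\) is bounded by
\[
C\bigl(\Hs[u]u^{q-1}|w_\lambda|+u^q\textstyle\int_{\Sigma_\lambda^-}\frac{u^{p-1}|w_\lambda|}{|z-w|^\mu}\bigr),
\]
that is, the defect is generated only by the bad set.

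\textbf{Step 2: starting the spheres.} For \(\lambda\) small, the blow-up condition is not needed. Near \(x\), regularity gives \(u_{x,\lambda}\ge u\) in an annulus, and for large \(|y|\) the decay \(u_{x,\lambda}(y)\sim\lambda^{N-2s}u(x)|y|^{2s-N}\) compares with \(u\ge m\Phi_s+V\). A lower bound \(u\ge c|y|^{2s-N}\) at infinity follows from \(V\), since \(F\not\equiv0\). We then show \(\Sigma_\lambda^-=\emptyset\) via Hardy–Littlewood–Sobolev and weighted \(L^r\) norms on \(\Sigma_\lambda^-\), as in Dai–Liu–Qin. This uses the integrability \eqref{eq:source-condition} and the bound \eqref{eq:fundamental-bound}, which together give smallness of \(\|u\|\) on small-measure sets. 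This is the small-set estimate.

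\textbf{Step 3: the limiting radius.} Define \(\bar\lambda(x)=\sup\{\mu'\le|x|:w_\lambda\ge0\ \forall\lambda<\mu'\}\). We claim \(\bar\lambda(x)=|x|\). If \(\bar\lambda<|x|\), then the puncture's reflection \(0^{x,\bar\lambda}\) is a regular point, and \(u_{x,\bar\lambda}\) is bounded near \(0\) while \(u\to+\infty\) by \eqref{eq:blowup}. Hence \(w_{\bar\lambda}\) stays strictly negative-safe near \(0\) from the correct side. The strict positivity of the kernel then gives \(w_{\bar\lambda}>0\) in \(\Sigma_{\bar\lambda}\), unless \(F_{x,\bar\lambda}\equiv F\) and \(m=0\), which is excluded because the identity would force the reflected singularity. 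A narrow-region principle near \(\partial B_{\bar\lambda}(x)\), together with compactness on the remaining sets (using the decay lower bound at infinity), lets us move slightly further, a contradiction. We expect this step to be the main obstacle, specifically controlling the nonlocal Hartree defect uniformly in a neighborhood of \(\bar\lambda\) near \(0\) and at infinity. We would try the localized bound from Step 1 together with the HLS inequality on \(\Sigma_\lambda^-\), whose measure tends to \(0\).

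\textbf{Step 4: conclusion.} We have \(u_{x,\lambda}\ge u\) on \(\Sigma_\lambda\) for all \(0<\lambda<|x|\) and all \(x\ne0\). The standard calculus lemma for moving spheres with a fixed obstacle point (as in Li–Zhu and Jin–Li–Xu) then yields that \(u\) is radial about \(0\) and nonincreasing in \(|y|\). Strict decrease follows from the strict inequality \(w_\lambda>0\) obtained in Step 3 via the positive kernel.
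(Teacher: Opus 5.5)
\textbf{Main problem: the comparison is oriented the wrong way.} Your architecture matches the paper's: off-center Kelvin transforms, the Riesz decomposition with its point-mass term, the two Kelvin weights, a defect generated only by the bad set, narrow regions, and blow-up at $0$ to push the critical radius to $|x|$. But the direction of the comparison is reversed throughout, and this is not just a naming convention.

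On $\Sigma_\lambda=\{|y-x|>\lambda\}$ the weights $(\lambda/|z-x|)^{\sigma_q}$ and $(\lambda/|w-x|)^{\sigma_p}$ are $\le1$, not $\ge1$. Likewise $\Phi_{s,x,\lambda}\le\Phi_s$ there, not $\ge$. With your own formula this reads $\lambda|y|\le|y-x|\,|y^{x,\lambda}|$, which follows from $|y-x|^2|y^{x,\lambda}|^2-\lambda^2|y|^2=(|x|^2-\lambda^2)(|y-x|^2-\lambda^2)\ge0$. For instance, $\Phi_{s,x,\lambda}/\Phi_s\to(\lambda/|x|)^{N-2s}<1$ at infinity.

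The true statement, Proposition~\ref{Prop3.9}, is $u_{x,\lambda}\le u$ on $\Sigma_{x,\lambda}$. Your conclusion $u_{x,\lambda}\ge u$ fails for every solution under consideration. Your own Steps 2 and 3 contradict it: $u\to+\infty$ at $0$ while $u_{x,\lambda}$ stays bounded, and $u_{x,\lambda}=O(\lambda^{N-2s}|y|^{2s-N})$ against $u\ge c|y|^{2s-N}$ at infinity.

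The orientation is essential in Step 1. Because the weights are $\le1$, the monotonicity runs only as $u_{x,\lambda}\le u\Rightarrow\mathcal H_{x,\lambda}[u]\le\Hs[u]$ and $F_\lambda\le F$, using the split \eqref{eq3.11} with its nonnegative kernel. Correspondingly, only the positive part of $F_\lambda-F$ is generated by $\{u_{x,\lambda}>u\}$. In your orientation, $(\lambda/|w-x|)^{\sigma_p}u_{x,\lambda}^p-u^p$ can be negative even where $u_{x,\lambda}\ge u$. So the claimed localization of the defect fails unless $\sigma_p=\sigma_q=0$. The argument must be rewritten with $u-u_{x,\lambda}\ge0$ as the target and $m(\Phi_s-\Phi_{s,x,\lambda})\ge0$ as the favorable term. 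Once that is done, citing a Li--Zhu/Jin--Li--Xu calculus lemma in Step 4 is a legitimate substitute for the paper's direct limits in Lemmas~\ref{Lem3.10}--\ref{Lem3.11}.

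\textbf{Two further gaps remain after the correction.}

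\emph{The small-set estimate.} An HLS/weighted-$L^r$ argument in the style of Dai--Liu--Qin needs global integrability of $u$ that the hypotheses do not give. Here $u$ is only in $\Ls(\R^N)$, may be of order $|y|^{2s-N}$ at $0$, and has no decay assumption at infinity. Smallness of $u$ on small sets also does not control the second defect term, whose outer $z$-integral runs over all of $\Sigma_\lambda$. The paper instead works in $L^\infty$ on compact sets $Q\Subset\R^N\setminus\{0\}$ (Lemmas~\ref{Lem3.4}--\ref{Lem3.5}) and supplies the missing bound in Lemma~\ref{Lem3.3}:
\begin{itemize}
\item $\Hs[u](z)\ge c(1+|z|)^{-\mu}$ gives $u^q(z)(1+|z|)^{-\mu}\le CF(z)$;
\item the decomposition gives $c_{N,s}\int F(z)|y-z|^{2s-N}dz\le u(y)$, which is bounded on $Q$;
\item $\int_E|z-\xi|^{-\mu}d\xi\le C|E|^{(N-\mu)/N}$, and $\le C|E|(1+|z|)^{-\mu}$ for large $|z|$.
\end{itemize}
Together these yield a modulus $\omega_Q(|E|)$.

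\emph{Strict decrease.} It does not follow from strict sphere inequalities. With $x=ae$ and $\lambda^2=(a-r_1)(a-r_2)$, strictness only gives $U(r_2)<\bigl((a-r_1)/(a-r_2)\bigr)^{(N-2s)/2}U(r_1)$, and the factor tends to $1$ as $a\to\infty$. The paper proves strictness separately in Lemma~\ref{Lem3.12}. It uses a hyperplane-reflection identity for $V$ with $F$ radial and nonincreasing. Equality is ruled out because it would make $F$ periodic along a ray, hence constant, contradicting \eqref{eq:source-condition}.

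For the continuation step you flagged as the main obstacle, Lemma~\ref{Lem3.7} gives the needed uniform far-field control. It combines a quantitative bound $u-u_{x,\lambda_*}\ge c_0|y|^{2s-N}$ with the uniform convergence $|y|^{N-2s}u_{x,\lambda}(y)\to\lambda^{N-2s}u(x)$ for $\lambda$ near $\lambda_*$.
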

	
	The preceding symmetry theorem relies on the Kelvin-monotone range \eqref{eq1.3}. The explicit homogeneous calculation below is independent of that restriction and uses only the convergence conditions stated in the theorem.
	
	\begin{theorem}
		\label{Thm1.3}
		Let
		\[
		0<s<1,
		\qquad
		N>2s,
		\qquad
		0<\mu<N,
		\qquad
		p,q>0,
		\qquad
		p+q>1.
		\]
		Set
		\begin{equation}
			\label{eq:hom-alpha}
			\alpha
			=
			\frac{N-\mu+2s}{p+q-1}
		\end{equation}
		and assume that
		\begin{equation}
			\label{eq:hom-convergence}
			0<\alpha<N-2s,
			\qquad
			0<\alpha p<N,
			\qquad
			\mu+\alpha p>N.
		\end{equation}
		Define
		\begin{equation}
			\label{eq:lambda-coefficient}
			\Lambda_{N,s}(\alpha)
			=
			2^{2s}
			\frac{
				\Gamma\left(\frac{N-\alpha}{2}\right)
				\Gamma\left(\frac{\alpha+2s}{2}\right)
			}{
				\Gamma\left(\frac{\alpha}{2}\right)
				\Gamma\left(\frac{N-\alpha-2s}{2}\right)
			}
			>0
		\end{equation}
		and
		\begin{equation}
			\label{eq:hartree-coefficient}
			\mathcal C_{N,\mu,\alpha,p}
			=
			\pi^{\frac{N}{2}}
			\frac{
				\Gamma\left(\frac{N-\mu}{2}\right)
				\Gamma\left(\frac{N-\alpha p}{2}\right)
				\Gamma\left(\frac{\mu+\alpha p-N}{2}\right)
			}{
				\Gamma\left(\frac{\mu}{2}\right)
				\Gamma\left(\frac{\alpha p}{2}\right)
				\Gamma\left(\frac{2N-\mu-\alpha p}{2}\right)
			}
			>0.
		\end{equation}
		Then there is a unique \(A>0\) for which
		\[
		u_*(x)=A|x|^{-\alpha}
		\]
		solves \eqref{eq1.1} in \(\R^N\setminus\{0\}\). It is determined by
		\begin{equation}
			\label{eq:hom-coefficient}
			A^{p+q-1}
			=
			\frac{\Lambda_{N,s}(\alpha)}
			{\mathcal C_{N,\mu,\alpha,p}}.
		\end{equation}
		Conversely, suppose that \(u(x)=B|x|^{-\beta}\), with \(B,\beta>0\), is a positive radial homogeneous solution and that
		\[
		0<\beta<N-2s,\qquad
		0<\beta p<N,\qquad
		\mu+\beta p>N.
		\]
		Then \(\beta=\alpha\) and \(B=A\).
	\end{theorem}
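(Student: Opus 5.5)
The plan is to compute both sides of \eqref{eq1.1} explicitly for a pure power $A|x|^{-\beta}$, using two standard closed-form integrals. Matching the powers of $|x|$ will force $\beta=\alpha$, and matching the coefficients will force $A$; the direct and converse statements then follow from one computation.

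\emph{Fractional Laplacian of a power.} For $0<\beta<N-2s$, the function $|x|^{-\beta}$ lies in $\Ls(\R^N)$, since $\beta<N$ gives local integrability and $\beta+N+2s>N$ gives integrability at infinity. It is smooth away from the origin. The standard Fourier computation gives
\[
(-\Delta)^s|x|^{-\beta}=\Lambda_{N,s}(\beta)\,|x|^{-\beta-2s}\qquad\text{pointwise in }\R^N\setminus\{0\}.
\]
To justify it I use $\widehat{|x|^{-\beta}}=c_\beta|\xi|^{\beta-N}$ with $c_\beta=2^{N-\beta}\pi^{N/2}\Gamma(\frac{N-\beta}{2})/\Gamma(\frac{\beta}{2})$. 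Multiplying by $|\xi|^{2s}$ gives $c_\beta|\xi|^{\beta+2s-N}$, and inverting this with $\beta+2s<N$ produces exactly the Gamma ratio in \eqref{eq:lambda-coefficient}. The constant is positive because all four Gamma arguments are positive when $0<\beta<N-2s$. The distributional identity coincides with the pointwise singular-integral definition away from $0$: test against $\varphi\in C_c^\infty(\R^N\setminus\{0\})$ and use that $u\in\Ls$ and $u$ is smooth on $\supp\varphi$.

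\emph{Hartree potential of a power.} For $0<\beta p<N$ and $\mu+\beta p>N$, the integral $\int|y|^{-\beta p}|x-y|^{-\mu}\,dy$ converges. Indeed, the exponents $\beta p<N$ and $\mu<N$ handle the two local singularities, and the total decay $\beta p+\mu>N$ handles infinity. By homogeneity and rotation invariance the integral equals $\mathcal C_{N,\mu,\beta,p}|x|^{N-\mu-\beta p}$. The constant comes from the classical Riesz composition formula $I_a*I_b=I_{a+b}$, applied with $a=N-\beta p$, $b=N-\mu$ and $a+b<N$. Equivalently, one can compute the constant from the Fourier transforms of the two powers, which yields the stated Gamma expression. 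Each Gamma argument is positive under \eqref{eq:hom-convergence}, so $\mathcal C>0$. This step is the only place where bookkeeping of constants could go wrong, so I will verify the formula directly from the Fourier-transform constants $c_{\beta p}$ and $c_\mu$.

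\emph{Matching.} Substituting $u=B|x|^{-\beta}$ into \eqref{eq1.1} gives
\[
B\Lambda_{N,s}(\beta)|x|^{-\beta-2s}=B^{p+q}\,\mathcal C_{N,\mu,\beta,p}\,|x|^{N-\mu-\beta p-\beta q}
\]
for all $x\ne0$. Equating exponents gives $\beta(p+q-1)=N-\mu+2s$. Since $p+q>1$, this means $\beta=\alpha$. Equating coefficients gives $B^{p+q-1}=\Lambda/\mathcal C$, which has a unique positive root $A$ because both constants are positive.

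For existence, I take $\beta=\alpha$ and $B=A$. Then \eqref{eq:hom-convergence} guarantees every convergence condition used above. Moreover $u_*\in C^\infty(\R^N\setminus\{0\})\cap\Ls$ and $\Hs[u_*]$ is finite away from $0$, so $u_*$ is a positive punctured solution. Uniqueness of $A$ and the converse follow from the same matching, since the converse hypotheses are exactly the convergence conditions for $\beta$.
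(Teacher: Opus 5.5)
Your proposal is correct and follows the paper's own route. The paper computes $(-\Delta)^s|x|^{-\alpha}$ from the Fourier transforms of Riesz kernels (Lemma~\ref{Lem4.1}) and evaluates the Hartree potential of a power by the Riesz composition formula with constant $d_{N,\mu}d_{N,\alpha p}/d_{N,\mu+\alpha p-N}$ (Lemma~\ref{Lem4.2}); it then matches exponents and coefficients to obtain $\beta=\alpha$ and the unique positive root $A$. Your parametrization $a=N-\beta p$, $b=N-\mu$ is equivalent, and your constant bookkeeping reproduces \eqref{eq:lambda-coefficient} and \eqref{eq:hartree-coefficient}.
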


	The rest of the paper is organized as follows. Section~\ref{sec:prelim} proves Theorem~\ref{Thm1.1} through weighted Riesz estimates and a point-supported distribution argument. Section~\ref{sec:moving} uses this decomposition in an off-center moving-spheres proof of Theorem~\ref{Thm1.2}. Section~\ref{sec:profiles} proves Theorem~\ref{Thm1.3}. 
	
	\section{Preliminaries and Riesz decomposition}
	\label{sec:prelim}
	
	This section proves Theorem~\ref{Thm1.1}. We first establish the weighted
	Riesz estimates and the point-supported distribution lemma needed to
	identify the singular part.
	
	For \(u\in\Ls(\R^N)\) sufficiently regular near \(x\), the fractional Laplacian is defined by
	\[
	(-\Delta)^s u(x)
	=
	C_{N,s}\PV
	\int_{\R^N}
	\frac{u(x)-u(y)}{|x-y|^{N+2s}}\,dy.
	\]
	For nonsmooth functions, the operator is understood in the distributional sense:
	\[
	\langle (-\Delta)^s u,\varphi\rangle
	=
	\int_{\R^N}u(x)(-\Delta)^s\varphi(x)\,dx,
	\qquad
	\varphi\in C_c^\infty(\R^N).
	\]
	This pairing is well defined for \(u\in\Ls(\R^N)\), since
	\[
	|(-\Delta)^s\varphi(x)|
	\le
	\frac{C_\varphi}{1+|x|^{N+2s}}
	\quad \text{for }x\in\R^N .
	\]
	
	\begin{lemma}
		\label{Lem2.1}
		Set \(\gamma=N-2s\). There exists \(C>0\), depending only on \(N\) and \(s\), such that
		\begin{equation}
			\label{eq2.1}
			\int_{\R^N}
			\frac{dz}{|z-y|^\gamma(1+|z|^{N+2s})}
			\le
			\frac{C}{1+|y|^\gamma}
			\qquad\text{for every }y\in\R^N.
		\end{equation}
		Moreover, for every \(\varphi\in C_c^\infty(\R^N)\), there exists
		\(C_\varphi>0\) such that
		\begin{equation}
			\label{eq2.2}
			\int_{\R^N}
			\frac{|(-\Delta)^s\varphi(z)|}{|z-y|^\gamma}\,dz
			\le
			\frac{C_\varphi}{1+|y|^\gamma}
			\qquad\text{for every }y\in\R^N.
		\end{equation}
	\end{lemma}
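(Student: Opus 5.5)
We prove \eqref{eq2.1} first and then obtain \eqref{eq2.2} from it at once. For \eqref{eq2.2}, the bound recorded just before the lemma, \(|(-\Delta)^s\varphi(z)|\le C_\varphi(1+|z|^{N+2s})^{-1}\), shows that the integrand in \eqref{eq2.2} is dominated by \(C_\varphi\) times the integrand of \eqref{eq2.1}. So \eqref{eq2.2} follows from \eqref{eq2.1} with the constant \(C_\varphi C\).

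For \eqref{eq2.1}, note first that the integral is finite and bounded uniformly in \(y\). Split \(\R^N\) into \(\{|z-y|<1\}\) and its complement. On the first set, the weight is at most \(1\) and \(|z-y|^{-\gamma}\) is integrable there, because \(\gamma=N-2s<N\). On the second set, \(|z-y|^{-\gamma}\le1\) and \((1+|z|^{N+2s})^{-1}\) is integrable over \(\R^N\). Hence the left side is bounded by a constant depending only on \(N,s\), and this settles \(|y|\le2\), where \(1+|y|^\gamma\le 1+2^\gamma\).

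For \(|y|\ge2\), we need decay of order \(|y|^{-\gamma}\). Split the integral into three regions.
\begin{itemize}
\item[(i)] On \(\{|z|<|y|/2\}\) we have \(|z-y|\ge|y|/2\), so the contribution is at most \(2^\gamma|y|^{-\gamma}\int_{\R^N}(1+|z|^{N+2s})^{-1}dz\).
\item[(ii)] On \(\{|z-y|<|y|/2\}\) we have \(|z|\ge|y|/2\), so the weight is at most \(C|y|^{-N-2s}\). The remaining integral is \(\int_{|w|<|y|/2}|w|^{-\gamma}dw=C|y|^{2s}\), so this piece is bounded by \(C|y|^{-N}\le C|y|^{-\gamma}\).
\item[(iii)] On the rest, both \(|z|\ge|y|/2\) and \(|z-y|\ge|y|/2\). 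Then \(|z-y|^{-\gamma}\le 2^\gamma|y|^{-\gamma}\), and the weight is integrable, giving again \(C|y|^{-\gamma}\).
\end{itemize}
Summing the three pieces and using \(|y|^{-\gamma}\le 2(1+|y|^\gamma)^{-1}\) for \(|y|\ge1\) yields \eqref{eq2.1}.

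No step is a genuine obstacle. The only point needing care is region (ii), where the local singularity of \(|z-y|^{-\gamma}\) meets the tail of the weight. There the exponent count \(-(N+2s)+2s=-N\le-\gamma\) is exactly what is needed, and it uses \(s>0\).
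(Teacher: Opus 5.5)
Your proof is correct and follows essentially the same route as the paper. For bounded \(y\) you give a uniform bound; for large \(|y|\) you use the same three-region split \(\{|z|<|y|/2\}\), \(\{|z-y|<|y|/2\}\), and the remainder; and you obtain \eqref{eq2.2} from \eqref{eq2.1} by domination. The only differences are minor: the paper proves the decay \(|(-\Delta)^s\varphi(z)|\le C_\varphi(1+|z|)^{-N-2s}\) in \eqref{eq2.3} (using \(\operatorname{supp}\varphi\) and the absence of a principal value far away) rather than quoting the bound stated before the lemma, and it splits \(\{|z|\le4\}\) versus \(\{|z|>4\}\) for \(|y|\le2\).
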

	
	\begin{proof}
		We first prove \eqref{eq2.1}. If \(|y|\le2\), split the integral into
		\(\{|z|\le4\}\) and \(\{|z|>4\}\). The singularity on the first set is
		uniformly integrable because \(0<\gamma<N\). On the second set,
		\(|z-y|\ge |z|/2\), and hence the integrand is bounded by
		\(C|z|^{-2N}\). Thus the integral is uniformly bounded for \(|y|\le2\).
		
		Let \(R=|y|>2\), and decompose \(\R^N\) into
		\[
		E_1=\{|z|\le R/2\},\qquad
		E_2=\{|z-y|\le R/2\},\qquad
		E_3=\R^N\setminus(E_1\cup E_2).
		\]
		On \(E_1\), one has \(|z-y|\ge R/2\), so
		\[
		\int_{E_1}
		\frac{dz}{|z-y|^\gamma(1+|z|^{N+2s})}
		\le CR^{-\gamma}.
		\]
		On \(E_2\), one has \(|z|\ge R/2\), and therefore
		\[
		\int_{E_2}
		\frac{dz}{|z-y|^\gamma(1+|z|^{N+2s})}
		\le
		CR^{-N-2s}\int_{|w|\le R/2}|w|^{-\gamma}\,dw
		\le CR^{-N}.
		\]
		Finally, \(|z-y|\ge R/2\) and \(|z|\ge R/2\) on \(E_3\), whence
		\[
		\int_{E_3}
		\frac{dz}{|z-y|^\gamma(1+|z|^{N+2s})}
		\le
		CR^{-\gamma}
		\int_{|z|\ge R/2}\frac{dz}{1+|z|^{N+2s}}
		\le CR^{-N}.
		\]
		Since \(N>\gamma\), these estimates prove \eqref{eq2.1}.
		
		We next recall the decay of the fractional Laplacian of a test function:
		\begin{equation}
			\label{eq2.3}
			|(-\Delta)^s\varphi(z)|
			\le C_\varphi(1+|z|)^{-N-2s}.
		\end{equation}
		Indeed, \((-\Delta)^s\varphi\) is bounded on every fixed ball. If
		\(\supp\varphi\subset B_R\) and \(|z|>2R\), then \(\varphi(z)=0\) and no
		principal value is needed, so
		\[
		|(-\Delta)^s\varphi(z)|
		\le
		C_{N,s}\int_{B_R}
		\frac{|\varphi(w)|}{|z-w|^{N+2s}}\,dw
		\le C_\varphi |z|^{-N-2s}.
		\]
		Since \((1+|z|)^{N+2s}\) and \(1+|z|^{N+2s}\) are comparable,
		\eqref{eq2.2} follows from \eqref{eq2.1} and \eqref{eq2.3}.
	\end{proof}
	
	\begin{lemma}
		\label{Lem2.2}
		Let \(F\ge0\) be locally integrable and satisfy
		\[
		\int_{\R^N}\frac{F(y)}{1+|y|^{N-2s}}\,dy<+\infty.
		\]
		Define
		\[
		V(x)=\int_{\R^N}\Phi_s(x-y)F(y)\,dy
		=
		c_{N,s}\int_{\R^N}\frac{F(y)}{|x-y|^{N-2s}}\,dy.
		\]
		Then \(V\in\Ls(\R^N)\) and
		\begin{equation}
			\label{eq2.4}
			(-\Delta)^sV=F
			\quad\text{in }\mathcal D'(\R^N).
		\end{equation}
		If \(F\) is locally bounded in \(\R^N\setminus\{0\}\), then
		\[
		V\in C(\R^N\setminus\{0\})
		\quad\text{and}\quad
		V(x)<+\infty\quad\text{for every }x\ne0.
		\]
	\end{lemma}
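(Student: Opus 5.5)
The plan is to get all three assertions of Lemma~\ref{Lem2.2} from Tonelli's theorem combined with the weighted estimates of Lemma~\ref{Lem2.1}, followed by a Fourier-side identity for Riesz potentials of test functions.

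\emph{Step 1: \(V\in\Ls(\R^N)\).} Since \(F\ge0\), Tonelli gives
\[
\int_{\R^N}\frac{V(x)}{1+|x|^{N+2s}}\,dx
=c_{N,s}\int_{\R^N}F(y)\int_{\R^N}\frac{dx}{|x-y|^{N-2s}(1+|x|^{N+2s})}\,dy .
\]
By \eqref{eq2.1} the inner integral is at most \(C(1+|y|^{N-2s})^{-1}\). The weighted hypothesis on \(F\) then shows the whole expression is finite. In particular \(V<\infty\) a.e. and \(V\in L^1_{\loc}\).

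\emph{Step 2: the distributional equation \eqref{eq2.4}.} Fix \(\varphi\in C_c^\infty(\R^N)\). By \eqref{eq2.2}, the function \(F(y)|(-\Delta)^s\varphi(x)|\,|x-y|^{-(N-2s)}\) is integrable on \(\R^N\times\R^N\). Fubini therefore gives
\[
\int V\,(-\Delta)^s\varphi\,dx=\int F(y)\,\big(\Phi_s*(-\Delta)^s\varphi\big)(y)\,dy .
\]
It remains to show \(\Phi_s*(-\Delta)^s\varphi=\varphi\) pointwise. On the Fourier side, \(\widehat{\Phi_s}=|\xi|^{-2s}\) with the normalization \(c_{N,s}\); this is the content of \((-\Delta)^s\Phi_s=\delta_0\). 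The transform of \((-\Delta)^s\varphi\) is \(|\xi|^{2s}\widehat\varphi\). The product is \(\widehat\varphi\), which is Schwartz.

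To justify the convolution identity, note that \((-\Delta)^s\varphi\) is integrable (by \eqref{eq2.3}) and \(\Phi_s\) is a tempered distribution. I would argue as follows: \(\Phi_s*\psi\) for \(\psi=(-\Delta)^s\varphi\) is a bounded continuous function, since it is dominated by \eqref{eq2.2}. Its distributional Fourier transform is \(|\xi|^{-2s}\cdot|\xi|^{2s}\widehat\varphi=\widehat\varphi\). The singular factor \(|\xi|^{-2s}\) is locally integrable because \(2s<N\), so the product is a legitimate \(L^1_{\loc}\) function times a continuous function.

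Alternatively, and more cleanly, I would write \(\langle\Phi_s*\psi,\chi\rangle=\langle\Phi_s,\tilde\psi*\chi\rangle\) for Schwartz \(\chi\) and use Parseval with the locally integrable symbol. This step, the rigorous identification \(\Phi_s*(-\Delta)^s\varphi=\varphi\) given that \(\psi\) is only polynomially decaying and not Schwartz, is the main technical point. If needed, I would approximate \(\Phi_s\) by \(\Phi_s e^{-\epsilon|x|}\), or use the semigroup/Gamma-function representation \(|x|^{-(N-2s)}\propto\int_0^\infty t^{s-1}p_t(x)\,dt\) with heat kernels \(p_t\). That representation gives \(\Phi_s*\psi=\int_0^\infty t^{s-1}\Gamma(s)^{-1}\,e^{t\Delta}(-\Delta)^s\varphi\,dt\), which equals \((-\Delta)^{-s}(-\Delta)^s\varphi=\varphi\) by spectral calculus.

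\emph{Step 3: continuity off the origin.} Assume \(F\) is locally bounded in \(\R^N\setminus\{0\}\). Fix \(x_0\ne0\) and let \(r=|x_0|/4\). Split \(V=V_1+V_2\), where \(V_1\) integrates over \(B_{2r}(x_0)\) and \(V_2\) over the complement.

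For \(V_1\): \(F\le M\) on \(B_{2r}(x_0)\), so \(V_1\) is the Newtonian-type potential of a bounded, compactly supported function. Since \(N-2s<N\), \(V_1\) is finite and continuous, by uniform integrability of \(|z|^{-(N-2s)}\) near \(0\).

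For \(V_2\): take \(x\in B_r(x_0)\) and \(y\notin B_{2r}(x_0)\). Then \(|x-y|\ge r\) and \(|x-y|\ge c(1+|y|)\), up to constants depending on \(x_0\). Hence the integrand is dominated by \(C F(y)(1+|y|^{N-2s})^{-1}\), which is integrable by hypothesis. Dominated convergence then gives continuity of \(V_2\) on \(B_r(x_0)\).

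Together these show \(V\in C(\R^N\setminus\{0\})\) and \(V(x)<\infty\) for every \(x\ne0\).
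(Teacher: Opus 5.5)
Your proposal is correct and follows the paper's proof essentially step for step. You use Tonelli with \eqref{eq2.1} for $V\in\Ls(\R^N)$, then Fubini justified by \eqref{eq2.2} together with the identity $\int\Phi_s(x-y)(-\Delta)^s\varphi(x)\,dx=\varphi(y)$, and finally a near/far splitting with dominated convergence for finiteness and continuity off the origin. The only differences are in level of detail: the paper reads the convolution identity directly off $(-\Delta)^s\Phi_s=\delta_0$ plus translation invariance, which under its pairing definition is exactly that identity, whereas your heat-kernel subordination argument actually verifies it; and your two-region split is a slightly streamlined version of the paper's three-region argument with $\delta\to0$, $R\to\infty$.
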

	
	\begin{proof}
		The nonnegativity of \(F\) permits Tonelli's theorem. Lemma~\ref{Lem2.1}
		gives
		\begin{align*}
			\int_{\R^N}\frac{V(x)}{1+|x|^{N+2s}}\,dx
			&=
			c_{N,s}\int_{\R^N}F(y)
			\left(
			\int_{\R^N}
			\frac{dx}{|x-y|^{N-2s}(1+|x|^{N+2s})}
			\right)dy\\
			&\le
			C\int_{\R^N}\frac{F(y)}{1+|y|^{N-2s}}\,dy
			<+\infty.
		\end{align*}
		Thus \(V\in\Ls(\R^N)\).
		
		Let \(\varphi\in C_c^\infty(\R^N)\). Before exchanging the integrations,
		we use \eqref{eq2.2} to check that
		\begin{align*}
			&c_{N,s}\int_{\R^N}F(y)
			\left(
			\int_{\R^N}
			\frac{|(-\Delta)^s\varphi(x)|}{|x-y|^{N-2s}}\,dx
			\right)dy\\
			&\hspace{4em}\le
			C_\varphi\int_{\R^N}
			\frac{F(y)}{1+|y|^{N-2s}}\,dy
			<+\infty.
		\end{align*}
		Fubini's theorem is therefore applicable. Since
		\((-\Delta)^s\Phi_s=\delta_0\) with the Fourier normalization fixed in
		Section~\ref{sec:intro}, translation invariance gives
		\[
		\int_{\R^N}\Phi_s(x-y)(-\Delta)^s\varphi(x)\,dx
		=\varphi(y).
		\]
		Consequently,
		\[
		\int_{\R^N}V(x)(-\Delta)^s\varphi(x)\,dx
		=
		\int_{\R^N}F(y)\varphi(y)\,dy,
		\]
		which proves \eqref{eq2.4}.
		
		It remains to justify the pointwise assertion. Fix \(x\ne0\) and choose
		\(0<\rho<|x|/4\). On \(B_\rho(x)\), the source is bounded and the Riesz
		kernel is locally integrable. On bounded sets separated from \(x\), the
		kernel is bounded and \(F\in L^1_{\loc}\). For \(|y|>2|x|\), one has
		\(|x-y|\ge |y|/2\), and the remaining tail is controlled by the weighted
		source integral. Hence \(V(x)<+\infty\).
		
		We finally prove continuity away from the origin. Let \(x_j\to x\ne0\).
		Choose \(0<\delta<|x|/8\), and take \(j\) sufficiently large that
		\(|x_j-x|<\delta/2\). Local boundedness of \(F\), together with
		\[
		\int_{B_{3\delta}(0)}
		\frac{dz}{|z|^{N-2s}}
		\le C\delta^{2s},
		\]
		shows that the contributions near \(x\) and \(x_j\) are
		\(O(\delta^{2s})\), uniformly in \(j\). On
		\(B_R\setminus B_{2\delta}(x)\), dominated convergence applies because
		\(F\in L^1_{\loc}(\R^N)\) and, for large \(j\), both kernels are
		uniformly separated from their singularities. If \(R\) is sufficiently
		large, then
		\[
		\frac{1}{|x_j-y|^{N-2s}}+\frac{1}{|x-y|^{N-2s}}
		\le \frac{C}{|y|^{N-2s}}
		\quad\text{for }|y|>R,
		\]
		uniformly for large \(j\), and the weighted source condition makes this
		tail uniformly small. The resulting three-region estimate, followed by
		\(\delta\to0\) and \(R\to\infty\), gives \(V(x_j)\to V(x)\).
	\end{proof}
	
	\begin{lemma}
		\label{Lem2.3}
		Let \(h\in\Ls(\R^N)\) satisfy
		\[
		(-\Delta)^s h=0
		\quad \text{in }\R^N\setminus\{0\}
		\]
		in the distributional sense. Assume that
		\[
		h(x)\le C|x|^{-(N-2s)}
		\quad \text{for }0<|x|<r_0
		\]
		for some \(r_0>0\). Then
		\[
		h(x)=m\Phi_s(x)+P(x)
		\quad\text{in }\R^N\setminus\{0\},
		\]
		where \(m\in\R\) and \(P\) is a polynomial of degree strictly less than
		\(2s\). In particular,
		\[
		P(x)=b\quad\text{if }0<s\le\frac12,
		\]
		whereas
		\[
		P(x)=a\cdot x+b\quad\text{if }\frac12<s<1.
		\]
	\end{lemma}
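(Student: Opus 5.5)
The plan is to show first that the distribution \(T=(-\Delta)^s h\in\mathcal D'(\R^N)\) is supported at the origin and has order zero, so that \(T=m\delta_0\). Then \(h-m\Phi_s\) is \(s\)-harmonic on all of \(\R^N\), and we identify it by a Fourier/Liouville argument. Since \(h\in\Ls(\R^N)\), the pairing \(\langle T,\varphi\rangle=\int h(-\Delta)^s\varphi\) is well defined, and by hypothesis \(T\) vanishes on test functions supported in \(\R^N\setminus\{0\}\). Hence \(\supp T\subset\{0\}\), and by the structure theorem \(T=\sum_{|\beta|\le k}c_\beta\partial^\beta\delta_0\) for some finite \(k\).

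To eliminate the derivative terms, we test against the rescaled functions \(\varphi_\varepsilon(x)=\psi(x/\varepsilon)x^\beta/\beta!\), where \(\psi\in C_c^\infty(B_1)\) and \(\psi\equiv1\) near \(0\). Then \(\langle T,\varphi_\varepsilon\rangle=\pm c_\beta\) for every \(\varepsilon\), while scaling gives \((-\Delta)^s\varphi_\varepsilon(x)=\varepsilon^{|\beta|-2s}g(x/\varepsilon)\) with \(|g(z)|\le C(1+|z|)^{-N-2s}\), by \eqref{eq2.3}.

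The main obstacle is that the hypothesis only gives a one-sided bound \(h\le C|x|^{-(N-2s)}\), whereas we need the two-sided bound \(\int_{B_{r_0}}|h||x|^{-(N-2s)}\ldots\) for the estimate. If we have \(|h(x)|\le C|x|^{2s-N}\) near \(0\), then
\[
\Big|\int h\,(-\Delta)^s\varphi_\varepsilon\Big|\le \varepsilon^{|\beta|-2s}\Big(\int_{B_{r_0}}C|x|^{2s-N}|g(x/\varepsilon)|\,dx+O(\varepsilon^{N+2s})\Big)=O(\varepsilon^{|\beta|}),
\]
which forces \(c_\beta=0\) for \(|\beta|\ge1\). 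To handle the sign issue, we first show that \(h^-\) is harmless: away from the origin \(h\) is smooth (\(s\)-harmonic functions in \(\Ls\) are \(C^\infty\) on open sets), and locally near \(0\) we can use \(h\in L^1_{\loc}\). We then run the same scaling test with nonnegative test functions of the form \(\psi_\varepsilon=\psi(\cdot/\varepsilon)\) and with combinations that isolate the top-order coefficient. Concretely, consider \(\tilde h=C\Phi_s/c_{N,s}-h\ge0\) near \(0\); it satisfies \((-\Delta)^s\tilde h=T'\) supported at \(0\), with \(\tilde h\) nonnegative near the origin. Arguing as in Bôcher-type lemmas, and using the \(L^1_{\loc}\) smallness \(\int_{B_\varepsilon}\tilde h=o(1)\) together with the sign to control \(\int \tilde h\,|g(x/\varepsilon)|\) against its values near \(0\), we get that \(\int_{B_{r}}\tilde h|x|^{-?}\) is bounded, via a mean-value comparison. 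If this route proves delicate, the fallback is to use the positive-part control: \(\int \tilde h\,(-\Delta)^s\varphi_\varepsilon\) with \(|g|\le C(1+|z|)^{-N-2s}\) is bounded by \(\varepsilon^{|\beta|-2s}\int\tilde h(x)(1+|x|/\varepsilon)^{-N-2s}dx\). This quantity is \(O(\varepsilon^{|\beta|-2s}\varepsilon^{N}\cdot\text{avg})\), and the averages of a nonnegative function whose fractional Laplacian is a finite combination of derivatives of \(\delta_0\) can be computed from the testing identity itself with \(\beta=0\), giving the bound \(O(\varepsilon^{2s-N})\). This yields \(T=m\delta_0\).

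Finally, set \(P=h-m\Phi_s\in\Ls(\R^N)\), which satisfies \((-\Delta)^sP=0\) in \(\mathcal D'(\R^N)\). Since \(P\in\Ls\), it is a tempered distribution, and \(|\xi|^{2s}\widehat P=0\) in the appropriate sense, obtained by testing with \(\varphi\) whose Fourier transform vanishes near \(0\). Hence \(\supp\widehat P\subset\{0\}\), so \(P\) is a polynomial. The integrability \(\int|P|/(1+|x|^{N+2s})<\infty\) forces \(\deg P<2s\), which gives a constant when \(s\le\frac12\) and an affine function when \(s>\frac12\). The Fourier step needs care because \(|\xi|^{2s}\) is not smooth at \(0\), but restricting to test functions with \(\widehat\varphi\in C_c^\infty(\R^N\setminus\{0\})\) avoids this.
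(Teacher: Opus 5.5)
\textbf{There is a genuine gap.} The elimination of the derivative terms in \(T\) is not proved, and that is exactly where the one-sided hypothesis has to enter. Your reduction to \(T=\sum_{|\beta|\le k}c_\beta\partial^\beta\delta_0\), the scaling identity \(\langle T,\varphi_\varepsilon\rangle=(-1)^{|\beta|}c_\beta\), and the closing Fourier step are sound.

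\textbf{Where the argument stops.} Your scaling test needs no sign information when \(|\beta|\ge2s\). Since \(|g|\le C\), \(g(x/\varepsilon)\to0\) for \(x\neq0\), and \(h\in L^1_{\loc}\), dominated convergence gives \(\int h(x)g(x/\varepsilon)\,dx\to0\). Hence \(c_\beta=0\) for every \(|\beta|\ge2s\), which settles the case \(s\le1/2\) completely. The only real case is \(|\beta|=1\) with \(s>1/2\). There you need \(\int h(x)g_j(x/\varepsilon)\,dx=o(\varepsilon^{2s-1})\) with \(g_j=(-\Delta)^s(x_j\psi)\). For radial \(\psi\) this kernel is odd, so it sees only the odd part of \(h\), and the bound \(h\le C|x|^{-(N-2s)}\) does not control that.

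Neither of your repairs closes this.
\begin{itemize}
\item The ``mean-value comparison'' route is not an argument; even the weight exponent is left unspecified.
\item The fallback claims that averages of \(\tilde h\) can be read off from the \(\beta=0\) identity. But \((-\Delta)^s\psi\) must change sign, since its integral is \(\widehat{(-\Delta)^s\psi}(0)=0\). It is positive on the plateau of \(\psi\) and behaves like \(-|z|^{-N-2s}\) off the support. So the identity only bounds \(\int_{B_{\varepsilon/2}}\tilde h\) by \(C\varepsilon^{2s}\), plus a shell term over \(B_\varepsilon\setminus B_{\varepsilon/2}\), plus a tail \(C\varepsilon^{N+2s}\int_{|x|>\varepsilon}|\tilde h||x|^{-N-2s}\,dx\).
\item For a profile \(\tilde h\sim|x|^{-N+\delta}\), both sides of that inequality are of order \(\varepsilon^{\delta}\). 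So it cannot yield \(\int_{B_\varepsilon}\tilde h=O(\varepsilon^{2s})\).
\end{itemize}
What you are implicitly invoking is a fractional B\^ocher theorem for a function that is nonnegative only near the origin, and that is not supplied.

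\textbf{How to repair it.} Use the one-sided bound only after \(h\) is known explicitly. By the remark above, for \(s>1/2\) you are left with \(T=m\delta_0+a\cdot\nabla\delta_0\).
\begin{itemize}
\item Set \(g=h-m\Phi_s-a\cdot\nabla\Phi_s\). It still lies in \(\Ls\), because \(|\nabla\Phi_s|\sim|x|^{-(N-2s)-1}\) is locally integrable exactly when \(s>1/2\). Also \((-\Delta)^sg=0\) in \(\mathcal D'(\R^N)\).
\item Your own Fourier step then gives \(g=P\) with \(\deg P<2s\), so \(h=m\Phi_s+a\cdot\nabla\Phi_s+P\).
\item If \(a\neq0\), then \(a\cdot\nabla\Phi_s=-c_{N,s}(N-2s)(a\cdot x)|x|^{-(N-2s)-2}\ge c|x|^{-(N-2s)-1}\) on the cone \(\{a\cdot x\le-\tfrac12|a||x|\}\). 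This dominates \(m\Phi_s+P\) there and contradicts \(h\le C|x|^{-(N-2s)}\).
\end{itemize}
The paper does this in one stroke. It applies the Fourier-support argument to the tempered distribution \(h-\sum c_\alpha D^\alpha\Phi_s\), which need not lie in \(\Ls\). It then removes orders \(k\ge2s\) by local integrability and the order-one term by the same cone argument.

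In either version, make explicit that \(\int h(-\Delta)^s\vartheta=\langle T,\vartheta\rangle\) extends from \(C_c^\infty\) to Schwartz \(\vartheta\) with \(\widehat\vartheta\in C_c^\infty(\R^N\setminus\{0\})\). This follows from a cutoff \(\chi(\cdot/R)\) together with a bound \((1+|x|^{N+2s})|(-\Delta)^s\vartheta(x)|\le C\,p_L(\vartheta)\) for some Schwartz seminorm \(p_L\).
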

	
	\begin{proof}
		Set \(\gamma=N-2s\). Since \(h\in\Ls(\R^N)\), it defines a tempered
		distribution. The distribution
		\[
		T=(-\Delta)^s h
		\]
		is supported at the origin. We first make the point-support reduction
		explicit. A compactly supported distribution has finite order, say \(M\):
		for a fixed compact neighborhood \(K\) of the origin, its continuity gives
		\[
		|\langle T,\zeta\rangle|
		\le C_K\max_{|\alpha|\le M}\|D^\alpha\zeta\|_{L^\infty(K)}
		\quad\text{for }\zeta\in C_c^\infty(K).
		\]
		Choose a cutoff \(\chi\) equal to one near the origin. For a test function
		\(\psi\), let \(P_M\psi\) be its Taylor polynomial of order \(M\) at the
		origin and put \(R=\chi(\psi-P_M\psi)\). The function \(R\) and all its
		derivatives of order at most \(M\) vanish at the origin. Since
		\(\supp T\subset\{0\}\), one has
		\(\langle T,R\rangle=\langle T,\chi(\cdot/\varepsilon)R\rangle\) for every
		sufficiently small \(\varepsilon\). Taylor's formula and the finite-order
		bound give
		\[
		\bigl|\langle T,\chi(\cdot/\varepsilon)R\rangle\bigr|
		\le C\varepsilon ,
		\]
		which tends to zero. Hence \(\langle T,R\rangle=0\), while
		\(\langle T,\psi\rangle=\langle T,\chi\psi\rangle\) because \(\chi=1\)
		near \(\supp T\). It follows that
		\(\langle T,\psi\rangle=\langle T,\chi P_M\psi\rangle\), which depends only
		on the derivatives \(D^\alpha\psi(0)\), \(|\alpha|\le M\). Therefore
		\begin{equation}
			\label{eq2.5}
			T=\sum_{|\alpha|\le M}c_\alpha D^\alpha\delta_0
			\quad\text{in }\mathcal D'(\R^N).
		\end{equation}
		Both sides are tempered distributions.
		
		Regard each \(D^\alpha\Phi_s\) as the distributional derivative of
		\(\Phi_s\in\mathcal S'(\R^N)\), and set
		\[
		g=h-\sum_{|\alpha|\le M}c_\alpha D^\alpha\Phi_s
		\quad\text{in }\mathcal S'(\R^N).
		\]
		We identify the Fourier support of \(g\) directly, without multiplying an
		arbitrary tempered distribution by \(|\xi|^{2s}\). We first justify the
		Schwartz test that will be used in this argument. There is a
		finite Schwartz seminorm \(p_L\) such that
		\begin{equation}
			\label{eq2.6}
			\sup_{x\in\R^N}
			(1+|x|^{N+2s})|(-\Delta)^s\vartheta(x)|
			\le C p_L(\vartheta)
			\qquad\text{for }\vartheta\in\mathcal S(\R^N).
		\end{equation}
		Indeed, on a fixed ball the principal-value integral is controlled by
		second derivatives and an integrable tail. For large \(|x|\), split the
		integral into \(|y|\le|x|/2\), \(|y-x|<|x|/2\), and the remaining region.
		The first part is bounded by
		\(C|x|^{-N-2s}\|\vartheta\|_{L^1}\) for the \(\vartheta(y)\) term, while
		\[
		|\vartheta(x)|
		\int_{|y|\le |x|/2}|x-y|^{-N-2s}\,dy
		\le C|x|^{-2s}|\vartheta(x)|
		\le Cp_N(\vartheta)|x|^{-N-2s}.
		\]
		Here \(p_N(\vartheta):=\sup_z(1+|z|)^N|\vartheta(z)|\).
		In the second region, the
		principal-value Taylor remainder is controlled by rapidly decreasing first
		and second derivatives; and in the third both the kernel separation and
		rapid decay apply. These estimates use only finitely many Schwartz
		seminorms and prove \eqref{eq2.6}.
		
		Let \(\chi_R(x)=\chi(x/R)\), where \(\chi=1\) near the origin. Since
		\(\chi_R\vartheta\to\vartheta\) in \(\mathcal S\), \eqref{eq2.6} and
		\(h\in\Ls(\R^N)\) give
		\[
		\int_{\R^N}h(-\Delta)^s(\chi_R\vartheta)\,dx
		\longrightarrow
		\int_{\R^N}h(-\Delta)^s\vartheta\,dx.
		\]
		On the other hand, the definition of \(T=(-\Delta)^sh\) gives
		\[
		\int_{\R^N}h(-\Delta)^s(\chi_R\vartheta)\,dx
		=\langle T,\chi_R\vartheta\rangle.
		\]
		Because \(\supp T\subset\{0\}\), the right-hand side equals
		\(\langle T,\vartheta\rangle\) for all sufficiently large \(R\). Therefore
		\begin{equation}
			\label{eq2.7}
			\int_{\R^N}h(-\Delta)^s\vartheta\,dx
			=\langle T,\vartheta\rangle
			\qquad\text{for every }\vartheta\in\mathcal S(\R^N).
		\end{equation}
		
		We now avoid multiplying an arbitrary tempered distribution by a
		nonsmooth function at the origin. Let
		\(\psi\in C_c^\infty(\R^N\setminus\{0\})\) and choose
		\(\vartheta\in\mathcal S(\R^N)\) by
		\[
		\widehat{\vartheta}(\xi)
		=(2\pi)^N|\xi|^{-2s}\psi(-\xi).
		\]
		Its Fourier transform is compactly supported away from the origin. Hence
		\((-\Delta)^s\vartheta=\widehat\psi\), because
		\(\widehat{\widehat\psi}(\xi)=(2\pi)^N\psi(-\xi)\). On this frequency
		support, the identity
		\[
		\langle D^\alpha\Phi_s,(-\Delta)^s\vartheta\rangle
		=\langle D^\alpha\delta_0,\vartheta\rangle
		\]
		follows directly from \(\widehat{\Phi_s}=|\xi|^{-2s}\). Combining it with
		\eqref{eq2.5} and \eqref{eq2.7}, and using the distributional convention
		\(\langle\widehat g,\psi\rangle=\langle g,\widehat\psi\rangle\), gives
		\[
		\langle\widehat g,\psi\rangle
		=\langle g,(-\Delta)^s\vartheta\rangle=0.
		\]
		It follows that \(\supp\widehat g\subset\{0\}\). Applying the point-support
		argument used for \eqref{eq2.5} to \(\widehat g\), we conclude that \(g=P\)
		is a polynomial. Thus
		\begin{equation}
			\label{eq2.8}
			h=\sum_{|\alpha|\le M}c_\alpha D^\alpha\Phi_s+P
		\end{equation}
		as tempered distributions and as functions away from the origin.
		
		We now exclude every derivative of positive order. Let \(k\ge1\) be the
		largest order occurring in \eqref{eq2.8}. The sum of the order-\(k\) terms
		is a homogeneous function of degree \(-(\gamma+k)\) on the punctured
		space. It cannot vanish identically unless all its coefficients vanish:
		its Fourier transform away from the origin has the form
		\(Q_k(\xi)|\xi|^{-2s}\), where \(Q_k\) is the corresponding homogeneous
		polynomial. If \(k\ge2s\), then \(\gamma+k\ge N\). On a set of directions
		where the angular factor is bounded away from zero, the absolute radial
		integral is
		\[
		\int_0^{r_0}r^{N-1-\gamma-k}\,dr
		=
		\int_0^{r_0}r^{2s-k-1}\,dr
		=+\infty.
		\]
		Neither the lower-order homogeneous terms nor the polynomial can cancel
		this leading order. This contradicts \(h\in L^1_{\loc}(\R^N)\). Hence
		\(k<2s\).
		
		Since \(0<2s<2\), the only remaining positive order is \(k=1\), and this
		can occur only when \(s>1/2\). Such a term has the form
		\[
		a\cdot\nabla\Phi_s(x)
		=
		-c_{N,s}\gamma(a\cdot x)|x|^{-\gamma-2}.
		\]
		If \(a\ne0\), this expression is bounded below by
		\(c|x|^{-\gamma-1}\) on a cone. The remaining terms have lower singular
		order, so the assumed one-sided bound \(h(x)\le C|x|^{-\gamma}\) is
		violated on that cone. Therefore every derivative term of positive order
		vanishes, and
		\[
		h=m\Phi_s+P.
		\]
		
		Finally, \(\Phi_s\in\Ls(\R^N)\): its radial integral near the origin is a
		multiple of \(\int_0^1r^{2s-1}\,dr\), while its weighted tail is a multiple
		of \(\int_1^\infty r^{-N-1}\,dr\). Hence
		\(P=h-m\Phi_s\in\Ls(\R^N)\). If \(P\) has degree \(d\), its
		leading homogeneous part is bounded away from zero on some cone, and its
		tail contribution to the \(\Ls\)-norm dominates
		\[
		\int_1^\infty r^{d-2s-1}\,dr.
		\]
		Thus \(d<2s\). In the borderline case \(s=1/2\), an affine term gives the
		logarithmically divergent integral \(\int_1^\infty r^{-1}\,dr\).
		The stated alternatives for \(P\) follow.
	\end{proof}
	
	\begin{proof}[Proof of Theorem~\ref{Thm1.1}]
		First, since
		\(u\in C^{2s+\beta}_{\loc}(\R^N\setminus\{0\})\cap\Ls(\R^N)\), the fractional Laplacian
		\((-\Delta)^s u\) is locally continuous away from the origin. To see the
		uniform local control, fix \(K\Subset\R^N\setminus\{0\}\) and split its
		singular integral into \(|x-y|<d\), \(d\le|x-y|\le R\), and \(|y|>R\),
		where \(d>0\) is smaller than the distance from \(K\) to the origin. The
		\(C^{2s+\beta}\) Taylor remainder is integrable in the first part, local
		boundedness controls the second, and the \(\Ls\) weight controls the third,
		uniformly for \(x\in K\). The same bounds applied to differences give
		continuity by dominated convergence.
		
		The pointwise and distributional realizations agree away from the
		puncture. More precisely, for every
		\(\varphi\in C_c^\infty(\R^N\setminus\{0\})\),
		\[
		\int_{\R^N}u(x)(-\Delta)^s\varphi(x)\,dx
		=
		\int_{\R^N}(-\Delta)^su(x)\varphi(x)\,dx
		=
		\int_{\R^N}F(x)\varphi(x)\,dx.
		\]
		Indeed, insert a cutoff that equals one on a neighborhood of
		\(\supp\varphi\), symmetrize the truncated singular integral, and then
		pass to the principal-value limit. The near-diagonal term is controlled
		by the \(C_{\loc}^{2s+\beta}\) remainder, while the tail is controlled by
		\(u\in\Ls(\R^N)\). Hence
		\[
		F=\Hs[u]u^q=(-\Delta)^s u
		\]
		is locally bounded in \(\R^N\setminus\{0\}\). Moreover, since \(F\ge0\), the weighted condition
		\eqref{eq:source-condition} implies \(F\in L^1_{\loc}(\R^N)\).
		
		Lemma~\ref{Lem2.2} shows that \(V\) is finite in the punctured space,
		belongs to \(\Ls(\R^N)\), and satisfies
		\((-\Delta)^sV=F\) in \(\mathcal D'(\R^N)\).
		
		Set
		\[
		h=u-V .
		\]
		By the preceding identity, \(u\) solves \((-\Delta)^s u=F\)
		distributionally in \(\R^N\setminus\{0\}\), while
		\((-\Delta)^s V=F\) in \(\R^N\). Therefore we have
		\[
		(-\Delta)^s h=0
		\quad \text{in }\R^N\setminus\{0\}.
		\]
		Also \(h\in\Ls(\R^N)\). Since \(V\ge0\), the fundamental-order bound
		\eqref{eq:fundamental-bound} gives the one-sided estimate
		\[
		h(x)\le u(x)\le C|x|^{-(N-2s)}
		\quad\text{for }0<|x|<r_0.
		\]
		Lemma~\ref{Lem2.3} therefore gives
		\[
		h=m\Phi_s+P
		\]
		as distributions, and hence almost everywhere in the punctured space,
		for some \(m\in\R\) and a polynomial \(P\) of degree less than \(2s\).
		Since \(u\), \(V\), \(\Phi_s\), and \(P\) are continuous there, the
		identity holds for every \(x\ne0\).
		
		We first prove \(m\ge0\). Suppose, to the contrary, that \(m<0\). Set
		\(\gamma=N-2s\) and
		\(A_\rho=\{x:\rho<|x|<2\rho\}\). The Riesz potential of an
		\eqref{eq:source-condition}-weighted source has zero averaged fundamental mass at the
		origin:
		\begin{equation}
			\label{eq2.9}
			\frac{\rho^\gamma}{|A_\rho|}\int_{A_\rho}V(x)\,dx\to0
			\quad\text{as }\rho\to0^+ .
		\end{equation}
		Indeed, by Tonelli's theorem and scaling,
		\[
		\frac{\rho^\gamma}{|A_\rho|}\int_{A_\rho}V(x)\,dx
		=
		c_{N,s}\int_{\R^N}F(y)
		\left(
		\frac{\rho^\gamma}{|A_\rho|}\int_{A_\rho}\frac{dx}{|x-y|^\gamma}
		\right)dy.
		\]
		The factor in parentheses has the following uniform bound. If \(|y|\le4\rho\),
		then
		\[
		\frac{\rho^\gamma}{|A_\rho|}\int_{A_\rho}\frac{dx}{|x-y|^\gamma}
		\le C.
		\]
		If \(|y|>4\rho\), then
		\[
		\frac{\rho^\gamma}{|A_\rho|}\int_{A_\rho}\frac{dx}{|x-y|^\gamma}
		\le C\rho^\gamma |y|^{-\gamma}.
		\]
		Thus, for \(0<\rho<1\), the factor is bounded by
		\(C(1+|y|^\gamma)^{-1}\). For each fixed \(y\ne0\), it tends to zero as
		\(\rho\to0^+\). Hence \eqref{eq2.9} follows from dominated convergence and
		\eqref{eq:source-condition}.
		
		For each \(\rho\), choose a point in \(A_\rho\) at which \(V\) does not
		exceed its annular average. Since \(|x|\le2\rho\) on \(A_\rho\),
		\eqref{eq2.9} yields a sequence \(x_n\to0\) such that
		\(|x_n|^\gamma V(x_n)\to0\). Along this sequence,
		\[
		|x_n|^\gamma u(x_n)
		=
		|x_n|^\gamma V(x_n)+m c_{N,s}+|x_n|^\gamma P(x_n)
		\to m c_{N,s}<0,
		\]
		which contradicts \(u>0\). Therefore \(m\ge0\). The following argument
		eliminates \(P\).
		
		If \(s>1/2\), write \(P(x)=a\cdot x+b\). Suppose \(a\ne0\). There are a
		cone \(\Gamma\) of positive solid angle and \(c>0\) such that
		\[
		a\cdot y\ge c|y|
		\quad\text{for }y\in\Gamma.
		\]
		Since \(V\ge0\), \(m\Phi_s\ge0\), and \(b\) is fixed, for all sufficiently
		large \(y\in\Gamma\),
		\[
		u(y)\ge\frac c2|y|.
		\]
		For a fixed \(x\ne0\), this implies
		\[
		\Hs[u](x)
		\ge
		C\int_{\Gamma\cap\{|y|>R\}}|y|^{p-\mu}\,dy
		=
		C_\Gamma\int_R^\infty r^{N-1+p-\mu}\,dr
		=+\infty,
		\]
		because \(p>0\) and \(\mu<N\). This contradicts the finiteness of the
		Hartree potential. Hence \(a=0\). For \(s\le1/2\), Lemma~\ref{Lem2.3}
		already gives \(P=b\).
		
		We next exclude a positive constant. If \(b>0\), then
		\(u\ge b\) in the whole punctured space, and
		\[
		\Hs[u](x)
		\ge
		b^p\int_{\R^N}|x-y|^{-\mu}\,dy
		=+\infty.
		\]
		Therefore \(b\le0\).
		
		To exclude \(b<0\), we prove the annular-mean decay of \(V\). For
		\[
		A_R=\{x:R<|x|<2R\},
		\]
		scaling and the local integrability of the Riesz kernel give
		\[
		\frac1{|A_R|}\int_{A_R}|x-y|^{-\gamma}\,dx
		\le
		C
		\begin{cases}
			R^{-\gamma},& |y|\le4R,\\
			|y|^{-\gamma},& |y|>4R.
		\end{cases}
		\]
		Tonelli's theorem yields
		\[
		\frac1{|A_R|}\int_{A_R}V(x)\,dx
		\le
		CR^{-\gamma}\int_{|y|\le4R}F(y)\,dy
		+
		C\int_{|y|>4R}F(y)|y|^{-\gamma}\,dy.
		\]
		The second term tends to zero by \eqref{eq:source-condition}. For the first term, the
		functions
		\[
		R^{-\gamma}\mathbf1_{\{|y|\le4R\}}
		\]
		converge pointwise to zero and are bounded, for \(R\ge1\), by
		\(C(1+|y|^\gamma)^{-1}\). Dominated convergence gives
		\begin{equation}
			\label{eq2.10}
			\frac1{|A_R|}\int_{A_R}V(x)\,dx\longrightarrow0
			\quad\text{as }R\to+\infty.
		\end{equation}
		
		If \(b<0\), positivity of \(u\) gives
		\[
		V(x)+m\Phi_s(x)>-b.
		\]
		Since \(\Phi_s(x)\to0\) at infinity, this implies
		\(V(x)\ge-b/2>0\) for every sufficiently large \(|x|\), in contradiction
		with \eqref{eq2.10}. Thus \(b=0\), and \eqref{eq:decomposition} follows. Applying
		\((-\Delta)^s\) to \eqref{eq:decomposition} in distributions proves the final
		assertion.
	\end{proof}

	\section{Moving spheres and radial symmetry}
	\label{sec:moving}
	
	In this section we prove Theorem~\ref{Thm1.2}. We use the Riesz decomposition
	obtained in Theorem~\ref{Thm1.1} and the direct method of moving spheres.
	The argument follows the integral comparison method of
	\cite{ChenLiOu,LiMovingSpheres,ChenLiZhang2017} and the Hartree splitting
	used in \cite{DaiFangQin}, adapted to the present punctured setting.
	
	Set
	\[
	F=\Hs[u]u^q,
	\qquad
	\Phi(y)=\Phi_s(y)=c_{N,s}|y|^{-(N-2s)} .
	\]
	By Theorem~\ref{Thm1.1}, there exists a constant
	\[
	m\ge0
	\]
	such that
	\begin{equation}
		\label{eq3.1}
		u(y)
		=
		c_{N,s}
		\int_{\R^N}
		\frac{F(z)}{|y-z|^{N-2s}}\,dz
		+m\Phi(y),
		\qquad y\in\R^N\setminus\{0\}.
	\end{equation}
	The term \(m\Phi\) is nonnegative in the Kelvin comparison.
	
	Fix throughout this section a point
	\[
	x\in\R^N\setminus\{0\}.
	\]
	We write \(\gamma=N-2s\).
	For \(0<\lambda<|x|\) and \(y\ne x\), set
	\[
	y^\lambda
	=
	x+\frac{\lambda^2(y-x)}{|y-x|^2},
	\qquad
	u_\lambda(y)
	=
	\left(\frac{\lambda}{|y-x|}\right)^{N-2s}u(y^\lambda),
	\]
	and
	\[
	\Sigma_\lambda=\R^N\setminus B_\lambda(x),
	\qquad
	w_\lambda(y)=u(y)-u_\lambda(y).
	\]
	When the center needs to be displayed, we write
	\[
	y^{x,\lambda}=y^\lambda,\qquad
	u_{x,\lambda}=u_\lambda,\qquad
	\Sigma_{x,\lambda}=\Sigma_\lambda.
	\]
	We also set
	\[
	\Phi_\lambda(y)
	=
	\left(\frac{\lambda}{|y-x|}\right)^{N-2s}\Phi(y^\lambda).
	\]
	The Kelvin transform is not defined at the point
	\[
	x-\frac{\lambda^2}{|x|^2}x,
	\]
	whose inversion is the puncture. We assign arbitrary values to transformed
	quantities at this single point. Its distance from \(x\) is
	\(\lambda^2/|x|<\lambda\), so it lies strictly inside \(B_\lambda(x)\) and
	does not belong to \(\Sigma_\lambda\). All pointwise statements below are
	understood away from the puncture and the null exceptional points; none affects
	the integral identities.
	
	The two Kelvin weights are
	\[
	\sigma_p=2N-\mu-(N-2s)p,
	\qquad
	\sigma_q=N+2s-\mu-(N-2s)q.
	\]
	By \eqref{eq1.3},
	\[
	\sigma_p\ge0,
	\qquad
	\sigma_q\ge0.
	\]
	
	The elementary inversion identities used below are
	\[
	|y^\lambda-z^\lambda|
	=
	\frac{\lambda^2|y-z|}{|y-x||z-x|},
	\qquad
	dz^\lambda=
	\left(\frac{\lambda}{|z-x|}\right)^{2N}dz.
	\]
	The power and Jacobian factors can be checked separately. Since
	\[
	u(\xi^\lambda)
	=
	\left(\frac{|\xi-x|}{\lambda}\right)^\gamma u_\lambda(\xi),
	\]
	changing variables \(\eta=\xi^\lambda\) in
	\(\Hs[u](z^\lambda)\) gives
	\begin{align*}
		\Hs[u](z^\lambda)
		&=
		\int_{\R^N}
		\frac{u^p(\xi^\lambda)}
		{|z^\lambda-\xi^\lambda|^\mu}
		\left(\frac{\lambda}{|\xi-x|}\right)^{2N}d\xi\\
		&=
		\left(\frac{|z-x|}{\lambda}\right)^\mu
		\int_{\R^N}
		\frac{u_\lambda^p(\xi)}{|z-\xi|^\mu}
		\left(\frac{\lambda}{|\xi-x|}\right)^{
			2N-\mu-\gamma p}d\xi .
	\end{align*}
	Thus
	\[
	\sigma_p=2N-\mu-\gamma p
	\]
	and
	\[
	\mathcal H_{x,\lambda}[u](z)
	=
	\int_{\R^N}
	\frac{u_\lambda^p(\xi)}{|z-\xi|^\mu}
	\left(\frac{\lambda}{|\xi-x|}\right)^{\sigma_p}d\xi .
	\]
	Equivalently,
	\begin{equation}
		\label{eq3.2}
		\mathcal H_{x,\lambda}[u](z)
		=
		\left(\frac{\lambda}{|z-x|}\right)^\mu
		\Hs[u](z^\lambda).
	\end{equation}
	Moreover,
	\begin{align*}
		\left(\frac{\lambda}{|z-x|}\right)^{N+2s}F(z^\lambda)
		&=
		\left(\frac{\lambda}{|z-x|}\right)^{
			N+2s-\mu-\gamma q}
		\mathcal H_{x,\lambda}[u](z)u_\lambda^q(z)\\
		&=
		\left(\frac{\lambda}{|z-x|}\right)^{\sigma_q}
		\mathcal H_{x,\lambda}[u](z)u_\lambda^q(z),
	\end{align*}
	where
	\[
	\sigma_q=N+2s-\mu-\gamma q.
	\]
	Applying the inversion to the Riesz potential then yields
	\[
	u_\lambda(y)
	=
	c_{N,s}
	\int_{\R^N}
	\frac{\mathcal H_{x,\lambda}[u](z)u_\lambda^q(z)}
	{|y-z|^\gamma}
	\left(\frac{\lambda}{|z-x|}\right)^{\sigma_q}dz
	+m\Phi_\lambda(y).
	\]
	Consequently,
	\begin{equation}
		\label{eq3.3}
		F_\lambda(z)
		=
		\left(\frac{\lambda}{|z-x|}\right)^{N+2s}F(z^\lambda),
	\end{equation}
	where \(F_\lambda\) is defined below.
	
	We repeatedly use the local boundedness of the Hartree potential away from the origin. Since
	\(F=(-\Delta)^s u\) is locally bounded in \(\R^N\setminus\{0\}\) and
	\(u>0\) is continuous, \(u\) is bounded away from zero on compact subsets of
	\(\R^N\setminus\{0\}\). Hence
	\[
	\Hs[u]=\frac{F}{u^q}
	\]
	is locally bounded there. The transformed potential
	\(\mathcal H_{x,\lambda}[u]\) is locally bounded on reflected compact sets by
	\eqref{eq3.2}.
	
	\begin{lemma}
		\label{Lem3.1}
		For every \(y\in\Sigma_\lambda\setminus\{0\}\), one has
		\begin{equation}
			\label{eq3.4}
			w_\lambda(y)
			=
			c_{N,s}
			\int_{\Sigma_\lambda}
			K_{x,\lambda}(y,z)\left(F(z)-F_\lambda(z)\right)dz
			+
			R_\lambda(y),
		\end{equation}
		where
		\[
		F(z)=\Hs[u](z)u^q(z),
		\qquad
		F_\lambda(z)=\mathcal H_{x,\lambda}[u](z)u_\lambda^q(z)
		\left(\frac{\lambda}{|z-x|}\right)^{\sigma_q},
		\]
		\begin{equation}
			\label{eq3.5}
			K_{x,\lambda}(y,z)
			=
			\frac{1}{|y-z|^{N-2s}}
			-
			\left(\frac{\lambda}{|y-x|}\right)^{N-2s}
			\frac{1}{|y^\lambda-z|^{N-2s}},
		\end{equation}
		and
		\[
		R_\lambda(y)
		=
		m\bigl(\Phi(y)-\Phi_\lambda(y)\bigr).
		\]
		Moreover,
		\begin{equation}
			\label{eq3.6}
			K_{x,\lambda}(y,z)>0
			\quad \text{if } |y-x|>\lambda,
			\quad |z-x|>\lambda,
		\end{equation}
		and
		\begin{equation}
			\label{eq3.7}
			R_\lambda(y)\ge0
			\quad \text{for } y\in\Sigma_\lambda\setminus\{0\}.
		\end{equation}
		If
		\[
		u_\lambda\le u
		\quad \text{in }\Sigma_\lambda\setminus\{0\},
		\]
		then
		\begin{equation}
			\label{eq3.8}
			F_\lambda(z)\le F(z)
			\quad \text{for every }z\in\Sigma_\lambda\setminus\{0\}.
		\end{equation}
	\end{lemma}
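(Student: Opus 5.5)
The plan is to begin from the decomposition \eqref{eq3.1} and its Kelvin-transformed version derived just before the lemma,
\[
u_\lambda(y)=c_{N,s}\int_{\R^N}\frac{F_\lambda(z)}{|y-z|^{\gamma}}\,dz+m\Phi_\lambda(y).
\]
Subtracting the two gives $w_\lambda=c_{N,s}\int_{\R^N}|y-z|^{-\gamma}(F-F_\lambda)\,dz+R_\lambda$. Next, I split $\R^N=\Sigma_\lambda\cup B_\lambda(x)$. On $B_\lambda(x)$ I change variables $z=\zeta^\lambda$ with $\zeta\in\Sigma_\lambda$. This requires the identities $|y-\zeta^\lambda|=\frac{\lambda}{|\zeta-x|}|y^\lambda-\zeta|\cdot\frac{|y-x|}{\lambda}$ (a consequence of the inversion identity with $y=(y^\lambda)^\lambda$) and $dz=(\lambda/|\zeta-x|)^{2N}d\zeta$, together with \eqref{eq3.3} in the forms $F(\zeta^\lambda)=(|\zeta-x|/\lambda)^{N+2s}F_\lambda(\zeta)$ and $F_\lambda(\zeta^\lambda)=(|\zeta-x|/\lambda)^{N+2s}F(\zeta)$. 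The second form follows by applying \eqref{eq3.3} at $\zeta^\lambda$, since the inversion is an involution. The exponents balance because $2N-(N+2s)-\gamma=0$, so the inner integral becomes $-\int_{\Sigma_\lambda}(\lambda/|y-x|)^{\gamma}|y^\lambda-\zeta|^{-\gamma}(F-F_\lambda)(\zeta)\,d\zeta$, which yields \eqref{eq3.4}.

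Absolute integrability has to be checked before splitting. The integrals of $F$ and $F_\lambda$ against the Riesz kernel converge separately: for $F$ by \eqref{eq:source-condition} via Lemma~\ref{Lem2.2}, and for $F_\lambda$ because its integral is the Kelvin image of the convergent one. Hence the subtraction and the change of variables are legitimate by Tonelli. The exceptional point $x-\lambda^2x/|x|^2$ is null and lies inside $B_\lambda(x)$.

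For \eqref{eq3.6}, I use the standard identity
\[
|y-x|^2|y^\lambda-z|^2\lambda^{-2}-|y-z|^2=\frac{(|y-x|^2-\lambda^2)(|z-x|^2-\lambda^2)}{\lambda^2}.
\]
This holds because $|y-x||y^\lambda-z|/\lambda=|\,\lambda(y-x)/|y-x|-|y-x|(z-x)/\lambda\,|$, which expands directly. Both factors on the right are positive, so $\frac{|y-x|}{\lambda}|y^\lambda-z|>|y-z|$, and raising to the power $-\gamma<0$ gives $K_{x,\lambda}>0$.

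For \eqref{eq3.7}, I observe that $\Phi_\lambda(y)=c_{N,s}\lambda^{\gamma}|y-x|^{-\gamma}|y^\lambda|^{-\gamma}$. Applying the same identity with $z=0$ gives $\frac{|y-x|}{\lambda}|y^\lambda|\ge|y|$ whenever $|y-x|\ge\lambda$ and $|x|>\lambda$, and the latter holds since $\lambda<|x|$. Hence $\Phi_\lambda\le\Phi$, and $m\ge0$ gives $R_\lambda\ge0$.

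For \eqref{eq3.8}, assume $u_\lambda\le u$ on $\Sigma_\lambda\setminus\{0\}$. Since $\sigma_q\ge0$ and $|z-x|\ge\lambda$, we have $(\lambda/|z-x|)^{\sigma_q}\le1$, and $u_\lambda^q\le u^q$. It therefore remains to show $\mathcal H_{x,\lambda}[u](z)\le\Hs[u](z)$. I split both integrals over $\Sigma_\lambda$ and $B_\lambda(x)$, and reflect the $B_\lambda(x)$ parts via $\xi\mapsto\xi^\lambda$. Writing $t=\lambda/|\xi-x|\le1$ for $\xi\in\Sigma_\lambda$, this gives
\[
\Hs[u](z)-\mathcal H_{x,\lambda}[u](z)=\int_{\Sigma_\lambda}\Bigl(\frac1{|z-\xi|^\mu}-\frac{t^{\mu}}{|z^{\lambda'}\ldots|}\Bigr)\cdots .
\]
More concretely, the difference takes the form $\int_{\Sigma_\lambda}\bigl(|z-\xi|^{-\mu}-(\lambda/|\xi-x|)^{\mu}|z-\xi^\lambda|^{-\mu}\cdot(\ldots)\bigr)\bigl(u^p(\xi)-t^{\sigma_p}u_\lambda^p(\xi)\bigr)d\xi$. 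By the same distance identity the kernel $|z-\xi|^{-\mu}-\bigl(\frac{\lambda}{|\xi-x|}\bigr)^\mu|z-\xi^\lambda|^{-\mu}$ is positive, and $u^p\ge u_\lambda^p\ge t^{\sigma_p}u_\lambda^p$ because $\sigma_p\ge0$ and $t\le1$.

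I expect the main obstacle to be the bookkeeping of the weights in this Hartree reflection, that is, verifying that the inner-ball contributions pair up with exactly the weights $t^{\sigma_p}$ and $t^{2N-\mu-\gamma p-\sigma_p}=t^0$. My approach is to carry the powers of $t$ explicitly through the change of variables, using the derivation of \eqref{eq3.2}. The cross terms should combine as $(K_\mu\text{-kernel})\times(u^p-t^{\sigma_p}u_\lambda^p)$, and both factors are nonnegative.
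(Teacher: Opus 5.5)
Your proposal is correct and is essentially the paper's proof. It uses the same inner/outer splitting with $z=\zeta^\lambda$ for \eqref{eq3.4}, the same inversion distance identity for \eqref{eq3.6} and \eqref{eq3.7} (the paper's identity for \eqref{eq3.7} is yours at $z=0$), and the same Hartree splitting \eqref{eq3.9}--\eqref{eq3.11} for \eqref{eq3.8}. The factor you left as $(\ldots)$ is exactly $1$: the identity $|z-\xi^\lambda|=|z-x|\,|z^\lambda-\xi|/|\xi-x|$ shows that your kernel $|z-\xi|^{-\mu}-(\lambda/|\xi-x|)^{\mu}|z-\xi^\lambda|^{-\mu}$ coincides with the paper's $|z-\xi|^{-\mu}-(\lambda/|z-x|)^{\mu}|z^\lambda-\xi|^{-\mu}$.
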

	
	\begin{proof}
		Apply the inversion to the Riesz potential part of \eqref{eq3.1}. Splitting the integral over \(B_\lambda(x)\) and \(\Sigma_\lambda\), and using \(z=\zeta^\lambda\) in the inner part, gives
		\[
		\left(\frac{\lambda}{|y-x|}\right)^{N-2s}
		\int_{\R^N}\frac{F(z)}{|y^\lambda-z|^{N-2s}}\,dz
		=
		\int_{\R^N}\frac{F_\lambda(z)}{|y-z|^{N-2s}}\,dz,
		\]
		with \(F_\lambda\) as above. Subtracting this identity, together with the transformed term \(m\Phi_\lambda\), from \eqref{eq3.1} yields \eqref{eq3.4} after the same inner--outer splitting. Formula \eqref{eq3.5} is the reflected Riesz kernel. The positivity \eqref{eq3.6} follows from
		\[
		|y-z|<\frac{|y-x|}{\lambda}|y^\lambda-z|,
		\qquad y,z\in\Sigma_\lambda,
		\]
		with strict inequality when both points are outside \(\partial B_\lambda(x)\).
		
		We prove \eqref{eq3.7}. For the fundamental solution part, observe that
		\[
		|y-x|^2|y^\lambda|^2-\lambda^2|y|^2
		=
		(|x|^2-\lambda^2)(|y-x|^2-\lambda^2)\ge0
		\]
		whenever \(0<\lambda<|x|\) and
		\(y\in\Sigma_\lambda\setminus\{0\}\). The first factor is positive because
		\(\lambda<|x|\), and the second is nonnegative because
		\(|y-x|\ge\lambda\). Hence
		\[
		|y-x|\,|y^\lambda|\ge \lambda |y|,
		\]
		which is equivalent to
		\[
		\Phi(y)\ge \Phi_\lambda(y).
		\]
		Since \(m\ge0\), this proves \(R_\lambda\ge0\).
		The comparison is strict when \(|y-x|>\lambda\); consequently
		\(R_\lambda(y)>0\) in the open exterior whenever \(m>0\).
		
		It remains to prove \eqref{eq3.8}. Since the inversion maps \(B_\lambda(x)\) onto \(\Sigma_\lambda\), a direct change of variables gives, for \(z\in\Sigma_\lambda\),
		\begin{align}
			\label{eq3.9}
			\Hs[u](z)
			&=\int_{\Sigma_\lambda}\frac{u^p(\xi)}{|z-\xi|^\mu}\,d\xi
			+\int_{\Sigma_\lambda}
			\left(\frac{\lambda}{|z-x|}\right)^\mu
			\frac{u_\lambda^p(\xi)}{|z^\lambda-\xi|^\mu}
			\left(\frac{\lambda}{|\xi-x|}\right)^{\sigma_p}d\xi,\\
			\label{eq3.10}
			\mathcal H_{x,\lambda}[u](z)
			&=\int_{\Sigma_\lambda}
			\frac{u_\lambda^p(\xi)}{|z-\xi|^\mu}
			\left(\frac{\lambda}{|\xi-x|}\right)^{\sigma_p}d\xi
			+\int_{\Sigma_\lambda}
			\left(\frac{\lambda}{|z-x|}\right)^\mu
			\frac{u^p(\xi)}{|z^\lambda-\xi|^\mu}d\xi .
		\end{align}
		Subtracting \eqref{eq3.10} from \eqref{eq3.9} yields
		\begin{align}
			\label{eq3.11}
			\Hs[u](z)-\mathcal H_{x,\lambda}[u](z)
			=\int_{\Sigma_\lambda}
			\left[
			\frac{1}{|z-\xi|^\mu}
			-
			\left(\frac{\lambda}{|z-x|}\right)^\mu
			\frac{1}{|z^\lambda-\xi|^\mu}
			\right]
			\left[
			u^p(\xi)-u_\lambda^p(\xi)
			\left(\frac{\lambda}{|\xi-x|}\right)^{\sigma_p}
			\right]d\xi .
		\end{align}
		For \(z,\xi\in\Sigma_\lambda\), the first bracket is nonnegative. If
		\(u_\lambda\le u\) in \(\Sigma_\lambda\setminus\{0\}\), then
		\[
		u_\lambda^p(\xi)
		\left(\frac{\lambda}{|\xi-x|}\right)^{\sigma_p}
		\le u_\lambda^p(\xi)\le u^p(\xi),
		\]
		because \(p>0\) and \(\sigma_p\ge0\). Thus
		\[
		\mathcal H_{x,\lambda}[u](z)\le \Hs[u](z).
		\]
		Similarly, \(q>0\) and \(\sigma_q\ge0\) give
		\[
		F_\lambda(z)
		\le
		\Hs[u](z)u_\lambda^q(z)
		\left(\frac{\lambda}{|z-x|}\right)^{\sigma_q}
		\le \Hs[u](z)u^q(z)=F(z),
		\]
		which proves \eqref{eq3.8}. This argument also covers the zero-weight
		endpoints.
	\end{proof}
	
	\begin{lemma}
		\label{Lem3.2}
		Let \(0<\lambda<|x|\). Let
		\(Q\subset\R^N\setminus\{0\}\) be compact and let
		\(\Omega\subset Q\cap\Sigma_\lambda\) be measurable. Assume that
		\[
		w_\lambda\ge0
		\quad\text{in }\Sigma_\lambda\setminus\Omega .
		\]
		Set
		\[
		W=(u_\lambda-u)^+,
		\qquad
		\Omega^-=\{z\in\Sigma_\lambda:w_\lambda(z)<0\}.
		\]
		Assume that \(u\), \(u_\lambda\), \(\Hs[u]\), and
		\(\mathcal H_{x,\lambda}[u]\) are bounded on
		\(Q\cap\Sigma_\lambda\). Then
		\(\Omega^-\subset\Omega\), and there exists \(C_Q>0\), depending only on
		these bounds and on \(p,q\), such that for almost every
		\(z\in\Sigma_\lambda\),
		\begin{equation}
			\label{eq3.12}
			\bigl(F_\lambda(z)-F(z)\bigr)^+
			\le
			C_Q W(z)
			+
			C_Q u^q(z)
			\int_{\Omega^-} \frac{W(\xi)}{|z-\xi|^\mu}\,d\xi .
		\end{equation}
	\end{lemma}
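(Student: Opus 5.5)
The plan is to compare $F_\lambda$ and $F$ factor by factor: first the transformed Hartree potentials, using the splitting \eqref{eq3.11}, and then the exterior powers, using a mean-value bound on $\Omega^-$. The inclusion $\Omega^-\subset\Omega$ is immediate, since $w_\lambda\ge0$ on $\Sigma_\lambda\setminus\Omega$. Consequently $W=0$ off $\Omega^-$, and every point where $W>0$ lies in $Q\cap\Sigma_\lambda$, where the assumed bounds hold. Throughout I use $p,q\ge1$ from \eqref{eq1.3}, which is the standing range of this section. If one insists on general $p,q>0$, the mean-value step additionally needs a positive lower bound for $u$ on $Q$, which is available by continuity and positivity of $u$ on the compact set $Q\subset\R^N\setminus\{0\}$.

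\emph{Step 1 (drop the weight).} Since $\sigma_q\ge0$ and $|z-x|\ge\lambda$ on $\Sigma_\lambda$, we have $(\lambda/|z-x|)^{\sigma_q}\le1$. Writing $\mathcal H=\Hs[u]$ and $\mathcal H_\lambda=\mathcal H_{x,\lambda}[u]$, this gives
\[
F_\lambda-F\le(\mathcal H_\lambda-\mathcal H)\,u_\lambda^q+\mathcal H\,(u_\lambda^q-u^q).
\]
Taking positive parts,
\[
(F_\lambda-F)^+\le(\mathcal H_\lambda-\mathcal H)^+u_\lambda^q+\mathcal H\,(u_\lambda^q-u^q)^+ .
\]

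\emph{Step 2 (exterior power).} The second term vanishes off $\Omega^-$. On $\Omega^-\subset Q\cap\Sigma_\lambda$, the mean-value theorem gives
\[
0<u_\lambda^q-u^q\le q\,u_\lambda^{q-1}W .
\]
Since $u_\lambda$ and $\mathcal H$ are bounded on $Q\cap\Sigma_\lambda$, this term is at most $C_QW$.

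\emph{Step 3 (Hartree difference).} Use \eqref{eq3.11} with the sign reversed:
\[
(\mathcal H_\lambda-\mathcal H)(z)=\int_{\Sigma_\lambda}\kappa(z,\xi)\Bigl[u_\lambda^p(\xi)\Bigl(\frac{\lambda}{|\xi-x|}\Bigr)^{\sigma_p}-u^p(\xi)\Bigr]d\xi,
\]
where the bracket kernel satisfies $0\le\kappa(z,\xi)\le|z-\xi|^{-\mu}$ for $z,\xi\in\Sigma_\lambda$. Since $\sigma_p\ge0$, the integrand is bounded above by $\kappa\,(u_\lambda^p-u^p)^+$, and this is supported in $\Omega^-$. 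On $\Omega^-$, the mean-value theorem and boundedness of $u_\lambda$ give $u_\lambda^p-u^p\le p\,u_\lambda^{p-1}W\le C_QW$. Hence
\[
(\mathcal H_\lambda-\mathcal H)^+(z)\le C_Q\int_{\Omega^-}\frac{W(\xi)}{|z-\xi|^\mu}\,d\xi .
\]

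\emph{Step 4 (convert $u_\lambda^q$ to $u^q$).} It remains to replace the factor $u_\lambda^q(z)$ by $u^q(z)$, as the statement requires.
\begin{itemize}[leftmargin=2em]
\item If $z\notin\Omega^-$, then $u_\lambda(z)\le u(z)$, so $u_\lambda^q(z)\le u^q(z)$ directly.
\item If $z\in\Omega^-$, write $u_\lambda^q=u^q+(u_\lambda^q-u^q)$. The first piece gives the desired nonlocal term. For the second, use $(\mathcal H_\lambda-\mathcal H)^+\le\mathcal H_\lambda\le C$ on $Q\cap\Sigma_\lambda$, together with Step 2's bound $u_\lambda^q-u^q\le C_QW$. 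This produces an extra contribution of at most $C_QW(z)$.
\end{itemize}
Combining Steps 1--4 yields \eqref{eq3.12} for almost every $z\in\Sigma_\lambda$, with the pointwise exceptions being the puncture and its inverse image.

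The only delicate point is this last conversion from $u_\lambda^q(z)$ to $u^q(z)$. It forces the use of the bound on $\mathcal H_{x,\lambda}[u]$ on $Q\cap\Sigma_\lambda$, rather than merely the nonnegativity of the kernel in \eqref{eq3.11}.
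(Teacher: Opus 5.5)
Your proof is correct and follows the paper's argument. Both drop the Kelvin weights, which are at most $1$ on $\Sigma_\lambda$, both use the kernel identity \eqref{eq3.11} with its nonnegative kernel bounded by $|z-\xi|^{-\mu}$ to localize the positive density to $\Omega^-$, and both apply the mean-value bound with $p,q\ge1$. The only difference is the product splitting: the paper writes $F_\lambda-F\le a_\lambda\mathcal H_{x,\lambda}[u](u_\lambda^q-u^q)_+ + u^q(\mathcal H_{x,\lambda}[u]-\Hs[u])^+$, which attaches $u^q$ to the potential defect from the start, so your Step~4 conversion, and the bound on $\Hs[u]$ used in your Step~2, become unnecessary.
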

	
	\begin{proof}
		Write
		\[
		a_\lambda(z)=\left(\frac{\lambda}{|z-x|}\right)^{\sigma_q},
		\qquad
		b_\lambda(\xi)=\left(\frac{\lambda}{|\xi-x|}\right)^{\sigma_p}.
		\]
		Since \(z,\xi\in\Sigma_\lambda\) and \(\sigma_p,\sigma_q\ge0\), one has
		\(0<a_\lambda,b_\lambda\le1\). Hence
		\[
		F_\lambda-F
		=
		a_\lambda \mathcal H_{x,\lambda}[u]u_\lambda^q
		-
		\Hs[u]u^q
		\]
		satisfies
		\begin{equation}
			\label{eq3.13}
			F_\lambda-F
			\le
			a_\lambda \mathcal H_{x,\lambda}[u](u_\lambda^q-u^q)_+
			+
			u^q\bigl(\mathcal H_{x,\lambda}[u]-\Hs[u]\bigr)^+ .
		\end{equation}
		Since \(w_\lambda\ge0\) in \(\Sigma_\lambda\setminus\Omega\), one has
		\(\Omega^-=\{W>0\}\subset\Omega\). For \(r\ge1\) and \(0\le A,B\le M\),
		\[
		(A^r-B^r)^+\le rM^{r-1}(A-B)^+.
		\]
		This includes the endpoint \(r=1\). Since
		\(\mathcal H_{x,\lambda}[u]\) is bounded on \(Q\), the first term in
		\eqref{eq3.13} vanishes outside \(\Omega^-\) and satisfies
		\begin{equation}
			\label{eq3.14}
			a_\lambda \mathcal H_{x,\lambda}[u](u_\lambda^q-u^q)_+
			\le
			C_Q W .
		\end{equation}
		
		Next, by \eqref{eq3.11},
		\[
		\mathcal H_{x,\lambda}[u](z)-\Hs[u](z)
		=
		\int_{\Sigma_\lambda}
		A_\lambda(z,\xi)
		\left[
		b_\lambda(\xi)u_\lambda^p(\xi)-u^p(\xi)
		\right]d\xi ,
		\]
		where
		\[
		A_\lambda(z,\xi)
		=
		\frac1{|z-\xi|^\mu}
		-
		\left(\frac{\lambda}{|z-x|}\right)^\mu
		\frac1{|z^\lambda-\xi|^\mu}
		\ge0 .
		\]
		If \(\xi\in\Sigma_\lambda\setminus\Omega^-\), then
		\(u_\lambda(\xi)\le u(\xi)\),
		and hence
		\[
		b_\lambda(\xi)u_\lambda^p(\xi)-u^p(\xi)\le0 .
		\]
		Thus only the density on \(\Omega^-\) can generate a positive potential
		defect. Since \(p\ge1\) and \(u,u_\lambda\) are bounded on \(Q\),
		\[
		b_\lambda(\xi)u_\lambda^p(\xi)-u^p(\xi)
		\le
		u_\lambda^p(\xi)-u^p(\xi)
		\le
		C_Q W(\xi)
		\quad\text{on }\Omega^-.
		\]
		Using \(A_\lambda(z,\xi)\le |z-\xi|^{-\mu}\), we obtain
		\begin{equation}
			\label{eq3.15}
			\bigl(\mathcal H_{x,\lambda}[u](z)-\Hs[u](z)\bigr)^+
			\le
			C_Q\int_{\Omega^-} \frac{W(\xi)}{|z-\xi|^\mu}\,d\xi .
		\end{equation}
		The integral is finite: \(W\) is bounded on the compact set \(Q\), and
		\(\mu<N\). Combining \eqref{eq3.13}, \eqref{eq3.14}, and
		\eqref{eq3.15} gives \eqref{eq3.12}. Notice that the generating density,
		not the resulting convolution, is localized to \(\Omega^-\).
	\end{proof}
	
	\begin{lemma}
		\label{Lem3.3}
		Let \(Q\) be a compact subset of \(\mathbb R^N\setminus\{0\}\). Then there exists a nondecreasing
		modulus \(\omega_Q(t)\), with \(\omega_Q(t)\to0\) as \(t\to0^+\), such that
		for every measurable set \(E\subset Q\),
		\begin{equation}
			\label{eq3.16}
			\sup_{y\in Q}
			\int_{\mathbb R^N}
			\frac{u^q(z)}{|y-z|^{N-2s}}
			\left(
			\int_E \frac{d\xi}{|z-\xi|^\mu}
			\right)dz
			\le
			\omega_Q(|E|).
		\end{equation}
	\end{lemma}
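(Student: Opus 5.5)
By Tonelli's theorem I will bound the left-hand side of \eqref{eq3.16} by splitting the \(z\)-integral into a compact region around \(Q\) and its complement. On the first region the inner integral is small because of the rearrangement inequality. On the second it is small because \(z\) stays away from \(E\subset Q\). The only global information needed is a weighted integrability of \(u^q\), which I will extract from the source condition \eqref{eq:source-condition} and a lower bound on the Hartree potential.

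\emph{Step 1 (lower bound on \(\Hs[u]\)).} Fix a ball \(B_1(x_0)\Subset\R^N\setminus\{0\}\). Since \(u>0\) is continuous there, \(u\ge c_0>0\) on \(B_1(x_0)\). Then for every \(z\ne0\)
\[
\Hs[u](z)\ge c_0^p\int_{B_1(x_0)}\frac{d\xi}{|z-\xi|^\mu}\ge \frac{c}{(1+|z|)^{\mu}}.
\]
Hence \(u^q(z)\le C(1+|z|)^{\mu}F(z)\). Combining this with \(F\in L^1_{\loc}(\R^N)\) and \eqref{eq:source-condition}, which were both shown in the proof of Theorem~\ref{Thm1.1}, gives
\[
\int_{B_\delta}u^q\,dz<\infty,\qquad \int_{|z|>R}\frac{u^q(z)}{|z|^{N-2s+\mu}}\,dz\le C\int_{|z|>R}\frac{F(z)}{|z|^{N-2s}}\,dz<\infty .
\]
This step is the main point: the fundamental bound \eqref{eq:fundamental-bound} alone would not make \(u^q\) integrable at the origin when \(q(N-2s)\ge N\), so the integrability must come from \(F\).

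\emph{Step 2 (splitting).} Choose \(0<\delta<R\) so that \(Q\subset\{2\delta<|z|<R/2\}\), and set \(Q'=\{\delta\le|z|\le R\}\). On \(Q'\), \(u\) is bounded. The rearrangement inequality gives
\[
\int_E|z-\xi|^{-\mu}\,d\xi\le C|E|^{1-\mu/N}.
\]
Moreover \(\sup_{y\in Q}\int_{Q'}|y-z|^{-(N-2s)}\,dz<\infty\). So the contribution of \(Q'\) is at most \(C|E|^{1-\mu/N}\).

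\emph{Step 3 (complement of \(Q'\)).} For \(z\notin Q'\) and \(\xi\in E\subset Q\), the distance \(|z-\xi|\) is at least \(\delta\) and at least \(c|z|\). The same holds for \(|y-z|\) with \(y\in Q\). Hence the inner integral is at most \(C|E|\min\{1,|z|^{-\mu}\}\), and the kernel is at most \(C\min\{1,|z|^{-(N-2s)}\}\). The contribution of \(B_\delta\) is therefore at most \(C|E|\int_{B_\delta}u^q\). The contribution of \(\{|z|>R\}\) is at most \(C|E|\int_{|z|>R}u^q|z|^{-(N-2s)-\mu}\). Both integrals are finite by Step 1.

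Collecting the three pieces, I will set
\[
\omega_Q(t)=C_Q\bigl(t^{1-\mu/N}+t\bigr),
\]
which is nondecreasing and tends to zero as \(t\to0^+\) because \(\mu<N\). All constants are uniform in \(y\in Q\), so this proves \eqref{eq3.16}.
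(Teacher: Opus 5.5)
Your proof is correct, but it gets the needed integrability of $u^q$ by a different route than the paper.

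\textbf{The paper's route.} It starts from the same lower bound $\Hs[u](z)\ge c(1+|z|)^{-\mu}$. It then invokes the representation \eqref{eq3.1} together with $m\ge0$ to get
\[
c_{N,s}\int_{\R^N}\frac{F(z)}{|y-z|^{N-2s}}\,dz\le u(y)\le\sup_Q u,
\qquad y\in Q.
\]
This yields the global weighted bound \eqref{eq3.18} in one stroke, covering the origin, the diagonal $z=y$, and infinity together. The paper then splits only into $B_{R_Q}$ and its complement. On $B_{R_Q}$ it uses $J_E\le C|E|^{(N-\mu)/N}$ and $u^q\le C_QF$, so the Riesz potential of $F$ is again bounded by $u(y)$. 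On the complement it uses $J_E\le C_Q|E|(1+|z|)^{-\mu}$ and applies \eqref{eq3.18}.

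\textbf{Your route.} You split into three regions: near the origin, an annulus $Q'$ containing $Q$, and the far field. On $Q'$, $u$ is bounded and you integrate the Riesz kernel directly. Off $Q'$, both kernels are dominated by powers of $|z|$. The integrability of $u^q$ near $0$ and at infinity then comes directly from \eqref{eq:source-condition} via $u^q\le C(1+|z|)^{\mu}F$. One small correction: \eqref{eq:source-condition} is a standing hypothesis, not something proved in Theorem~\ref{Thm1.1}. Only $F\in L^1_{\loc}(\R^N)$ is derived there.

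\textbf{Comparison.} Your argument is more elementary and never uses the decomposition or the sign $m\ge0$, so it would stand even without Theorem~\ref{Thm1.1}. The paper's argument is shorter once \eqref{eq3.1} is available, and it fits the section's theme that the nonnegative term $m\Phi$ only helps. Both give the same modulus $C_Q\bigl(t^{(N-\mu)/N}+t\bigr)$. Your remark that \eqref{eq:fundamental-bound} alone cannot give $u^q\in L^1_{\loc}$ near the origin is accurate. It is exactly why both versions must draw on $F$, either directly through the source condition or through its Riesz potential.
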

	
	\begin{proof}
		Choose a closed ball \(B\) contained in
		\(\mathbb R^N\setminus\{0\}\). Since \(u>0\),
		\[
		M_B=\int_B u^p(\eta)\,d\eta>0 .
		\]
		For all \(z\in\mathbb R^N\), this gives
		\[
		\Hs[u](z)
		\ge
		c(1+|z|)^{-\mu}.
		\]
		Hence
		\begin{equation}
			\label{eq3.17}
			u^q(z)(1+|z|)^{-\mu}
			\le
			C F(z).
		\end{equation}
		By \eqref{eq3.1} and the nonnegativity of \(m\Phi\),
		\[
		c_{N,s}\int_{\mathbb R^N}\frac{F(z)}{|y-z|^{N-2s}}\,dz
		\le u(y).
		\]
		Since \(Q\Subset\R^N\setminus\{0\}\) and \(u\in C_{\loc}(\R^N\setminus\{0\})\), the function \(u\) is bounded on \(Q\). Thus \eqref{eq3.17} gives, uniformly for \(y\in Q\),
		\begin{equation}
			\label{eq3.18}
			\int_{\mathbb R^N}
			\frac{u^q(z)(1+|z|)^{-\mu}}{|y-z|^{N-2s}}\,dz
			\le
			C_Q .
		\end{equation}
		
		Let
		\[
		J_E(z)=\int_E |z-\xi|^{-\mu}\,d\xi .
		\]
		Since \(\mu<N\), the integral of the radially decreasing kernel
		\(|\cdot|^{-\mu}\) over a set of prescribed measure is maximized by a ball
		centered at the kernel singularity. Integrating over that ball in polar
		coordinates gives
		\begin{equation}
			\label{eq3.19}
			J_E(z)\le C|E|^{\frac{N-\mu}{N}}
			\qquad\text{for all }z\in\mathbb R^N.
		\end{equation}
		For completeness, if \(|B_r(z)|=|E|\), then
		\[
		\int_{E\setminus B_r(z)}|z-\xi|^{-\mu}\,d\xi
		\le r^{-\mu}|E\setminus B_r(z)|
		=r^{-\mu}|B_r(z)\setminus E|
		\le\int_{B_r(z)\setminus E}|z-\xi|^{-\mu}\,d\xi,
		\]
		which proves the maximizing-ball assertion directly.
		Because \(E\subset Q\) and \(Q\) is bounded, there is \(R_Q>0\) such that
		\begin{equation}
			\label{eq3.20}
			J_E(z)\le C_Q |E|(1+|z|)^{-\mu}
			\qquad\text{for } |z|\ge R_Q .
		\end{equation}
		On the bounded ball \(B_{R_Q}\), the lower bound
		\(\Hs[u]\ge c_Q>0\) follows again from the positivity of \(u\) on a fixed ball.
		Thus \(u^q\le C_Q F\) on \(B_{R_Q}\). Combining this with
		\eqref{eq3.1}, the nonnegativity of \(m\Phi\), and \eqref{eq3.19}, we get
		\[
		\sup_{y\in Q}
		\int_{B_{R_Q}}
		\frac{u^q(z)J_E(z)}{|y-z|^{N-2s}}\,dz
		\le
		C_Q |E|^{\frac{N-\mu}{N}} .
		\]
		On \(\mathbb R^N\setminus B_{R_Q}\), using \eqref{eq3.20} and
		\eqref{eq3.18}, we get
		\[
		\sup_{y\in Q}
		\int_{\mathbb R^N\setminus B_{R_Q}}
		\frac{u^q(z)J_E(z)}{|y-z|^{N-2s}}\,dz
		\le
		C_Q |E| .
		\]
		Therefore \eqref{eq3.16} holds with, for instance,
		\[
		\omega_Q(t)
		=
		C_Q\left(t^{\frac{N-\mu}{N}}+t\right).
		\]
		After replacing \(\omega_Q\) by its nondecreasing envelope, we may assume that
		\(\omega_Q\) is nondecreasing.
	\end{proof}
	
	\begin{lemma}
		\label{Lem3.4}
		Let \(0<\lambda_0<\lambda_1<|x|\) and \(R>0\). There exists
		\(\ell_0>0\) such that the following holds. Let
		\(\lambda\in[\lambda_0,\lambda_1]\), and let
		\[
		\Omega\subset \Sigma_\lambda\cap B_R
		\]
		be measurable and contained in
		\[
		\{y:\lambda<|y-x|<\lambda+\ell\}
		\]
		for some \(0<\ell\le\ell_0\). If
		\[
		w_\lambda\ge0
		\quad \text{in }\Sigma_\lambda\setminus\Omega,
		\]
		then
		\[
		w_\lambda\ge0
		\quad \text{in }\Omega.
		\]
	\end{lemma}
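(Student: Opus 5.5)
Assume, for contradiction, that \(\Omega^-=\{z\in\Sigma_\lambda:w_\lambda(z)<0\}\) has positive measure; note \(\Omega^-\subset\Omega\) by hypothesis. The plan is to show that \(W=(u_\lambda-u)^+\) satisfies an integral inequality on \(\Omega^-\) whose constant is small when \(|\Omega|\) is small, forcing \(W\equiv0\). First fix the compact set. Since \(\lambda\le\lambda_1<|x|\), the shell \(\{\lambda<|y-x|<\lambda+\ell_0\}\) stays at distance at least \(|x|-\lambda_1-\ell_0>0\) from the origin once \(\ell_0<(|x|-\lambda_1)/2\). So I take \(Q=\overline{B_R}\cap\{\lambda_0\le|y-x|\le\lambda_1+\ell_0\}\), which is a compact subset of \(\R^N\setminus\{0\}\) independent of \(\lambda\). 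The reflected set \(\{y^\lambda:y\in Q\}\) lies in \(\{\lambda_0^2/(\lambda_1+\ell_0)\le|\cdot-x|\le\lambda_1\}\) and also avoids the origin. Continuity of \(u\) and local boundedness of \(\Hs[u]\) (recorded before Lemma~\ref{Lem3.1}), together with \eqref{eq3.2}, then bound \(u\), \(u_\lambda\), \(\Hs[u]\), \(\mathcal H_{x,\lambda}[u]\) on \(Q\cap\Sigma_\lambda\) uniformly in \(\lambda\in[\lambda_0,\lambda_1]\). Hence Lemma~\ref{Lem3.2} applies with a constant \(C_Q\) uniform in \(\lambda\).

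Next, evaluate \eqref{eq3.4} at \(y\in\Omega^-\). Since \(R_\lambda\ge0\) by \eqref{eq3.7} and \(K_{x,\lambda}>0\) by \eqref{eq3.6}, we get
\[
W(y)=-w_\lambda(y)\le c_{N,s}\int_{\Sigma_\lambda}K_{x,\lambda}(y,z)\bigl(F_\lambda(z)-F(z)\bigr)^+dz\le c_{N,s}\int_{\Sigma_\lambda}\frac{(F_\lambda-F)^+(z)}{|y-z|^{N-2s}}\,dz .
\]
Inserting \eqref{eq3.12} and using \(W=0\) off \(\Omega^-\) splits the right side into two pieces. The first is \(C_Q\int_{\Omega^-}W(z)|y-z|^{-\gamma}dz\). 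The second is
\[
C_Q\int_{\R^N}\frac{u^q(z)}{|y-z|^\gamma}\int_{\Omega^-}\frac{W(\xi)}{|z-\xi|^\mu}\,d\xi\,dz .
\]

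The main step is to close this with a sup-norm estimate rather than an \(L^r\) (HLS) one, because the double-convolution term is exactly what Lemma~\ref{Lem3.3} controls. Set \(\|W\|:=\sup_{\Omega^-}W\), which is finite since \(u,u_\lambda\) are bounded on \(Q\).
\begin{itemize}
\item The first piece is at most \(C_Q\|W\|\int_{\Omega^-}|y-z|^{-\gamma}dz\le C\|W\|\,|\Omega|^{2s/N}\), by the rearrangement bound used in \eqref{eq3.19} with exponent \(\gamma<N\).
\item The second piece is at most \(C_Q\|W\|\,\omega_Q(|\Omega^-|)\le C_Q\|W\|\,\omega_Q(|\Omega|)\), by Lemma~\ref{Lem3.3} with \(E=\Omega^-\subset Q\) and \(y\in Q\).
\end{itemize}
Taking the supremum over \(y\in\Omega^-\) gives
\[
\|W\|\le C_Q\bigl(|\Omega|^{2s/N}+\omega_Q(|\Omega|)\bigr)\|W\| .
\]
Since \(|\Omega|\le|\{\lambda<|y-x|<\lambda+\ell\}|\le C(\lambda_1+\ell_0)^{N-1}\ell\), I choose \(\ell_0\) so small that the bracket is below \(1/2\) uniformly in \(\lambda\). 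Then \(\|W\|=0\), which contradicts the definition of \(\Omega^-\), and so \(w_\lambda\ge0\) in \(\Omega\).

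The expected obstacle is the uniformity of \(C_Q\) in \(\lambda\). In particular, \(\mathcal H_{x,\lambda}[u]\) must be bounded on \(Q\cap\Sigma_\lambda\) uniformly for \(\lambda\in[\lambda_0,\lambda_1]\). Through \eqref{eq3.2} this reduces to local boundedness of \(\Hs[u]\) on the fixed compact reflected annulus, which I will verify explicitly. A second point to handle is that \(W\) is only defined almost everywhere near the exceptional point; this point lies inside \(B_\lambda(x)\), so it does not affect \(\Sigma_\lambda\).
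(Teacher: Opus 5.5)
Your proposal is correct and follows the paper's proof essentially step for step. The shared steps are the compact set $Q$ built from a shell around $\partial B_\lambda(x)$, and the bound $W\le c_{N,s}\int K_{x,\lambda}(F_\lambda-F)^+$ obtained from \eqref{eq3.4} using $R_\lambda\ge0$ and $K_{x,\lambda}\le|y-z|^{-(N-2s)}$. Then Lemma~\ref{Lem3.2} and Lemma~\ref{Lem3.3} give $M\le C\bigl(|\Omega^-|^{2s/N}+\omega_Q(|\Omega^-|)\bigr)M$, and the small shell volume forces $M=0$. Your pointwise supremum even removes the need for the paper's final ``positive measure by continuity'' step, but you should then begin from $\Omega^-\neq\varnothing$ rather than $|\Omega^-|>0$.

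One small fix: you define $Q$ using $\ell_0$ and then choose $\ell_0$ from constants that depend on $Q$. As the paper does, first fix $Q$ with a width $\ell_*$ satisfying $\lambda_1+\ell_*<|x|$, and then take $\ell_0\le\ell_*$. All the bounds and $\omega_Q$ remain valid on subsets, so nothing else changes.
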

	
	\begin{proof}
		Choose \(\ell_*>0\) such that
		\[
		\lambda_1+\ell_*<|x|.
		\]
		Set
		\[
		Q=\overline{B_R}\cap\{y:\lambda_0\le |y-x|\le \lambda_1+\ell_*\}.
		\]
		Then \(Q\) is a compact subset of \(\mathbb R^N\setminus\{0\}\), since
		\[
		|y|\ge |x|-(\lambda_1+\ell_*)>0
		\quad\text{for }y\in Q.
		\]
		If
		\(0<\ell\le\ell_*\), then \(\Omega\subset Q\).
		
		For \(y\in Q\cap\Sigma_\lambda\),
		\[
		|y^\lambda-x|=\frac{\lambda^2}{|y-x|}\le\lambda\le\lambda_1.
		\]
		Thus \(y^\lambda\in\overline{B_{\lambda_1}(x)}\), which is a compact
		subset of \(\mathbb R^N\setminus\{0\}\). Since
		\(\lambda/|y-x|\le1\), the functions \(u\), \(u_\lambda\), \(\Hs[u]\),
		and, by \eqref{eq3.2}, \(\mathcal H_{x,\lambda}[u]\) are uniformly bounded
		on \(Q\cap\Sigma_\lambda\), uniformly for
		\(\lambda\in[\lambda_0,\lambda_1]\).
		
		Assume by contradiction that
		\[
		\Omega^-=\{y\in\Omega:w_\lambda(y)<0\}
		\]
		is nonempty. Set
		\[
		W=(u_\lambda-u)^+ .
		\]
		Then \(W=0\) on \(\Sigma_\lambda\setminus\Omega^-\). For \(y\in\Omega^-\), using
		\eqref{eq3.4} and the positivity of \(K_{x,\lambda}\), we have
		\begin{equation}
			\label{eq3.21}
			W(y)
			=
			-w_\lambda(y)
			\le
			c_{N,s}
			\int_{\Sigma_\lambda}
			K_{x,\lambda}(y,z)
			\bigl(F_\lambda(z)-F(z)\bigr)^+\,dz .
		\end{equation}
		By Lemma~\ref{Lem3.2},
		\[
		\bigl(F_\lambda(z)-F(z)\bigr)^+
		\le
		CW(z)
		+
		Cu^q(z)\int_{\Omega^-}\frac{W(\xi)}{|z-\xi|^\mu}\,d\xi .
		\]
		Since \(K_{x,\lambda}(y,z)\le |y-z|^{-(N-2s)}\), \eqref{eq3.21}
		gives
		\begin{equation}
			\label{eq3.22}
			\begin{aligned}
				W(y)
				&\le
				C\int_{\Omega^-}
				\frac{W(z)}{|y-z|^{N-2s}}\,dz                                      \\
				&\quad
				+
				C\int_{\Sigma_\lambda}
				\frac{u^q(z)}{|y-z|^{N-2s}}
				\left(
				\int_{\Omega^-}\frac{W(\xi)}{|z-\xi|^\mu}\,d\xi
				\right)dz .
			\end{aligned}
		\end{equation}
		Put
		\[
		M=\operatorname*{ess\,sup}_{\Omega^-}W.
		\]
		The first term is bounded by
		\[
		C|\Omega^-|^{\frac{2s}{N}}M .
		\]
		Indeed, the same ball-rearrangement calculation, now with kernel
		\(|\cdot|^{-(N-2s)}\), gives
		\[
		\sup_{y\in\R^N}
		\int_{\Omega^-}|y-z|^{-(N-2s)}\,dz
		\le C|\Omega^-|^{2s/N}.
		\]
		For the second term, Lemma~\ref{Lem3.3} gives
		\[
		C\omega_Q(|\Omega^-|)
		M .
		\]
		Taking the essential supremum over \(y\in\Omega^-\), we obtain
		\begin{equation}
			\label{eq3.23}
			M
			\le
			C\left(
			|\Omega^-|^{\frac{2s}{N}}
			+
			\omega_Q(|\Omega^-|)
			\right)
			M .
		\end{equation}
		The constant \(C\) is independent of \(\lambda\), \(\ell\), and \(\Omega\).
		
		Since \(\Omega\) is contained in a shell of width \(\ell\) inside \(B_R\),
		\[
		|\Omega|
		\le
		\omega_N\bigl((\lambda+\ell)^N-\lambda^N\bigr)
		\le
		\omega_N\bigl((\lambda_1+\ell)^N-\lambda_1^N\bigr)
		\longrightarrow0
		\quad\text{as }\ell\to0^+
		\]
		uniformly for \(\lambda\in[\lambda_0,\lambda_1]\). Choose
		\(0<\ell_0\le\ell_*\) so small that the coefficient on the right-hand side of
		\eqref{eq3.23} is strictly less than \(1\). Then
		\[
		M=0.
		\]
		Since \(w_\lambda\) is continuous on \(Q\cap\Sigma_\lambda\), a nonempty negative set
		\(\Omega^-\) is open relative to \(Q\), has positive measure, and satisfies
		\(M>0\). We have reached a contradiction. Hence
		\[
		\Omega^-=\varnothing,
		\]
		and therefore \(w_\lambda\ge0\) in \(\Omega\).
	\end{proof}
	
	\begin{lemma}
		\label{Lem3.5}
		Let \(L>1\) be fixed. There exists \(\lambda_1>0\) such that, for every
		\(0<\lambda<\lambda_1\), the following assertion holds. Let
		\[
		\Omega\subset \{y:\lambda<|y-x|<L\lambda\}.
		\]
		Assume that \(\Omega\) is measurable.
		If
		\[
		w_\lambda\ge0
		\quad \text{in }\Sigma_\lambda\setminus\Omega,
		\]
		then
		\[
		w_\lambda\ge0
		\quad \text{in }\Omega.
		\]
	\end{lemma}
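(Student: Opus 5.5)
Lemma~\ref{Lem3.5} is a small-sphere narrow-region principle, and I will prove it by the same contraction argument as Lemma~\ref{Lem3.4}. The difference is that smallness now comes from \(\lambda\to0\) rather than from a thin shell, and the compact set \(Q\) must be fixed independently of \(\lambda\). Fix \(L>1\), choose \(\lambda_1>0\) with \(L\lambda_1\le |x|/2\), and set
\[
Q=\overline{B_{|x|/2}(x)}.
\]
This is a compact subset of \(\R^N\setminus\{0\}\). For \(0<\lambda<\lambda_1\), every \(\Omega\subset\{\lambda<|y-x|<L\lambda\}\) lies in \(Q\cap\Sigma_\lambda\), and \(|\Omega|\le\omega_N L^N\lambda^N\to0\).

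The first step is to verify the hypotheses of Lemma~\ref{Lem3.2} uniformly in \(\lambda\). On \(Q\), the functions \(u\) and \(\Hs[u]\) are bounded because \(Q\) avoids the origin. For \(y\in Q\cap\Sigma_\lambda\) one has \(|y^\lambda-x|\le\lambda<\lambda_1\), so \(y^\lambda\in Q\) as well. Combined with \(\lambda/|y-x|\le1\), this bounds \(u_\lambda\) by \(\sup_Q u\), and by \eqref{eq3.2} it bounds \(\mathcal H_{x,\lambda}[u]\) by \(\sup_Q\Hs[u]\). These bounds do not depend on \(\lambda\). Hence the constant \(C_Q\) in \eqref{eq3.12} and the modulus \(\omega_Q\) of Lemma~\ref{Lem3.3} are also independent of \(\lambda\).

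Next, I argue by contradiction exactly as before. Suppose \(\Omega^-=\{w_\lambda<0\}\cap\Omega\neq\varnothing\) and put \(W=(u_\lambda-u)^+\). Lemma~\ref{Lem3.1} gives \(R_\lambda\ge0\) and \(K_{x,\lambda}>0\), so for \(y\in\Omega^-\) we obtain \eqref{eq3.21}. Inserting \eqref{eq3.12} and using \(K_{x,\lambda}\le|y-z|^{-(N-2s)}\) yields \eqref{eq3.22}. With \(M=\operatorname{ess\,sup}_{\Omega^-}W\), the ball-rearrangement bound controls the first term and Lemma~\ref{Lem3.3} with \(E=\Omega^-\subset Q\) controls the second, giving
\[
M\le C\bigl((\omega_NL^N\lambda^N)^{2s/N}+\omega_Q(\omega_NL^N\lambda^N)\bigr)M .
\]
After shrinking \(\lambda_1\), the coefficient is less than \(1\) for all \(\lambda<\lambda_1\), so \(M=0\). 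As in Lemma~\ref{Lem3.4}, continuity of \(w_\lambda\) on \(Q\cap\Sigma_\lambda\) shows that a nonempty \(\Omega^-\) is relatively open, has positive measure, and has \(M>0\). This is a contradiction.

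The main obstacle is making all constants uniform as \(\lambda\to0\) while the set \(\Omega\) moves with \(\lambda\). I expect the choice of the single fixed compact set \(Q\) above, which contains both \(\Omega\) and its reflection, to settle this. A secondary check is that Lemma~\ref{Lem3.2} allows \(\Omega\) to be arbitrary inside \(Q\cap\Sigma_\lambda\), which is the case, and that the first-term constant in \eqref{eq3.22} is absolute. The latter holds because \(W\le C_Q\) on \(Q\).
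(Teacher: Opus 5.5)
Your proposal is correct and is essentially the paper's proof. Both arguments fix a $\lambda$-independent compact set $Q$ around $x$ that contains the annulus and its Kelvin reflection; the paper takes $\overline{B_{2L\bar\lambda}(x)}$ and you take $\overline{B_{|x|/2}(x)}$, which makes no difference. Both then obtain uniform bounds on $u$, $u_\lambda$, $\Hs[u]$ and $\mathcal H_{x,\lambda}[u]$, so that Lemmas~\ref{Lem3.2} and~\ref{Lem3.3} apply with constants independent of $\lambda$ and $\Omega$, and close the contraction $M\le C(|\Omega^-|^{2s/N}+\omega_Q(|\Omega^-|))M$ using $|\Omega^-|=O(\lambda^N)$.

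One wording point: the constant in front of the first term of \eqref{eq3.22} is not absolute but $c_{N,s}C_Q$ times the rearrangement constant. What the argument needs, and what you do secure, is that it is independent of $\lambda$ and $\Omega$.
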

	
	\begin{proof}
		Choose \(\bar\lambda>0\) such that
		\[
		2L\bar\lambda<|x|,
		\]
		and set
		\[
		Q=\overline{B_{2L\bar\lambda}(x)}.
		\]
		Then \(Q\Subset\mathbb R^N\setminus\{0\}\).
		All constants below are allowed to depend on \(Q,L,x,u\), but not on
		\(\lambda\in(0,\bar\lambda)\) or on \(\Omega\).
		
		Let \(0<\lambda<\bar\lambda\). Then
		\[
		\Omega\subset\{y:\lambda<|y-x|<L\lambda\}\subset Q.
		\]
		For \(y\in Q\cap\Sigma_\lambda\), one has
		\(y^\lambda\in\overline{B_\lambda(x)}\subset
		\overline{B_{\bar\lambda}(x)}\). Therefore
		\[
		u_\lambda(y)
		\le
		\sup_{\overline{B_{\bar\lambda}(x)}}u,
		\]
		and, by \eqref{eq3.2},
		\[
		\mathcal H_{x,\lambda}[u](y)
		\le
		\sup_{\overline{B_{\bar\lambda}(x)}}\Hs[u].
		\]
		Thus \(u\), \(u_\lambda\), \(\Hs[u]\), and
		\(\mathcal H_{x,\lambda}[u]\) are uniformly bounded on
		\(Q\cap\Sigma_\lambda\).
		
		Assume by contradiction that
		\[
		\Omega^-=\{y\in\Omega:w_\lambda(y)<0\}
		\]
		is nonempty, and set
		\[
		W=(u_\lambda-u)^+ .
		\]
		Then \(W=0\) on \(\Sigma_\lambda\setminus\Omega^-\). Repeating the estimate in
		\eqref{eq3.21}--\eqref{eq3.22}, using Lemma~\ref{Lem3.2} and
		Lemma~\ref{Lem3.3}, gives
		\[
		M
		\le
		C\left(
		|\Omega^-|^{\frac{2s}{N}}
		+
		\omega_Q(|\Omega^-|)
		\right)
		M,
		\qquad
		M=\operatorname*{ess\,sup}_{\Omega^-}W.
		\]
		Since
		\[
		|\Omega^-|\le|\Omega|
		\le
		|B_{L\lambda}(x)\setminus B_\lambda(x)|
		=
		\omega_N(L^N-1)\lambda^N,
		\]
		we have \(|\Omega^-|\to0\) as \(\lambda\to0^+\). Choose
		\(0<\lambda_1\le\bar\lambda\) so small that, whenever
		\(0<\lambda<\lambda_1\), the coefficient on the right-hand side is strictly
		less than \(1\). Then
		\[
		M=0.
		\]
		As in Lemma~\ref{Lem3.4}, continuity of \(w_\lambda\) on
		\(Q\cap\Sigma_\lambda\) implies
		that a nonempty \(\Omega^-\) has positive measure and \(M>0\). This is a
		contradiction. Therefore
		\[
		\Omega^-=\varnothing,
		\]
		and hence \(w_\lambda\ge0\) in \(\Omega\).
	\end{proof}
	
	\begin{lemma}
		\label{Lem3.6}
		For every \(x\in\R^N\setminus\{0\}\), there exists \(\lambda_0(x)>0\) such that
		\[
		u_{x,\lambda}(y)\le u(y)
		\quad \text{for all }0<\lambda<\lambda_0(x),\ y\in\Sigma_{x,\lambda}\setminus\{0\}.
		\]
	\end{lemma}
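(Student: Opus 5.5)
The plan is to start the spheres by splitting \(\Sigma_\lambda\setminus\{0\}\) into a near shell \(\{\lambda<|y-x|<L\lambda\}\) and a far region \(\{|y-x|\ge L\lambda\}\). On the near shell we will use the narrow-region principle of Lemma~\ref{Lem3.5}. On the far region we will use a direct pointwise comparison based on the lower bound of \(u\) coming from the decomposition \eqref{eq3.1}. Throughout, \(x\ne0\) is fixed, and we take \(\lambda<|x|/4\), so that \(\overline{B_\lambda(x)}\) stays a fixed positive distance from the origin.

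\emph{Far region.} For \(y\) with \(|y-x|\ge L\lambda\), the reflected point \(y^\lambda\) lies in \(\overline{B_{\lambda/L}(x)}\subset\overline{B_{|x|/2}(x)}\), on which \(u\le M_x\). Hence
\[
u_\lambda(y)\le \Bigl(\frac{\lambda}{|y-x|}\Bigr)^{N-2s}M_x .
\]
We need a matching lower bound
\[
u(y)\ge c_x\,|y-x|^{-(N-2s)}\quad\text{for }|y-x|\ge r_x ,
\]
for \(y\ne0\), together with \(u\ge c_x'>0\) on the bounded part of the far region. For the first bound we use \eqref{eq3.1} and drop \(m\Phi\ge0\). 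Since \(F>0\) is continuous away from \(0\), it is bounded below on a fixed ball \(B\Subset\R^N\setminus\{0\}\), and therefore
\[
u(y)\ge c_{N,s}\int_B F(z)|y-z|^{-(N-2s)}\,dz\ge c(1+|y|)^{-(N-2s)} .
\]
The origin causes no trouble here: near \(0\), \(u\) is bounded below (it even blows up), so the same lower bound holds on the whole punctured space.

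Now fix \(L\) large and then \(\lambda\) small. On \(\{L\lambda\le|y-x|\le r\}\) for a fixed \(r\), we get \(u_\lambda\le L^{-(N-2s)}M_x\), which is below \(\inf u\) on that bounded region (minus the origin, where \(u\) is large anyway) once \(L\) is large. On \(\{|y-x|>r\}\), we get \(u_\lambda(y)\le\lambda^{N-2s}M_x|y-x|^{-(N-2s)}\le u(y)\) once \(\lambda^{N-2s}M_x\le c\). Combining the two, \(w_\lambda\ge0\) on \(\{|y-x|\ge L\lambda\}\setminus\{0\}\) for all \(\lambda\) small, with \(L\) fixed.

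\emph{Near shell.} Take \(\Omega=\{y:\lambda<|y-x|<L\lambda\}\). The far-region step gives \(w_\lambda\ge0\) on \(\Sigma_\lambda\setminus\Omega\), and \(\partial B_\lambda(x)\) is harmless because \(w_\lambda=0\) there. Lemma~\ref{Lem3.5}, with this \(L\), then gives \(w_\lambda\ge0\) in \(\Omega\) for \(\lambda<\lambda_1\). Taking \(\lambda_0(x)=\min\{\lambda_1,\text{far-region threshold},|x|/4\}\) proves the lemma.

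\emph{Main obstacle.} The delicate step is the far-region lower bound. The constant must be uniform out to infinity, and \(L\) has to be chosen before \(\lambda\) so that Lemma~\ref{Lem3.5}, whose \(\lambda_1\) depends on \(L\), applies. The first approach to try is the \(F\)-on-a-ball bound above, which gives exactly the \(|y|^{-(N-2s)}\) decay needed to beat \(u_\lambda\).
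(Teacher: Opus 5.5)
Your proof is correct and follows the paper's route: a pointwise comparison outside the shell $\{\lambda<|y-x|<L\lambda\}$, with $L$ fixed before $\lambda$, and then the narrow-region principle Lemma~\ref{Lem3.5} inside the shell. The only difference is that your global bound $u(y)\ge c(1+|y|)^{-(N-2s)}$ on $\R^N\setminus\{0\}$, read off from \eqref{eq3.1} via $|y-z|\le|y|+|z|$, replaces the paper's three separate steps (near the origin, on a compact middle region, and in the far field), and it shows this lemma does not actually need the blow-up hypothesis \eqref{eq:blowup}.
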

	
	\begin{proof}
		Choose \(r_0>0\) such that
		\[
		\overline{B_{4r_0}(x)}\subset\R^N\setminus\{0\}.
		\]
		Since \(u\) is positive and continuous there, there exist constants
		\(m_x,M_x>0\) such that
		\[
		0<m_x\le u\le M_x
		\quad \text{in }B_{4r_0}(x).
		\]
		Choose \(L>2\) such that
		\[
		M_xL^{-(N-2s)}\le \frac{m_x}{2}.
		\]
		If
		\[
		L\lambda\le |y-x|\le 2r_0,
		\]
		then \(y^\lambda\in B_\lambda(x)\subset B_{r_0}(x)\), and hence
		\[
		u_\lambda(y)
		\le
		M_x\left(\frac{\lambda}{|y-x|}\right)^{N-2s}
		\le \frac{m_x}{2}
		\le u(y).
		\]
		
		Next choose \(\delta_0>0\) so small that
		\[
		\overline{B_{\delta_0}(0)}
		\cap\overline{B_{2r_0}(x)}=\varnothing
		\]
		and \(u(y)\ge1\) for \(0<|y|\le\delta_0\), using \eqref{eq:blowup}.
		By the blow-up condition \eqref{eq:blowup},
		\[
		u(y)\to+\infty
		\quad\text{as }y\to0.
		\]
		If \(0<|y|\le\delta_0\), then \(|y-x|\) is bounded below by a positive
		constant. Hence, for every sufficiently small
		\(\lambda\in(0,\lambda_a]\), the points \(y^\lambda\) lie in
		\(\overline{B_{r_0}(x)}\), uniformly in \(y\), and
		\[
		\left(\frac{\lambda}{|y-x|}\right)^{N-2s}\le C\lambda^{N-2s}.
		\]
		After decreasing \(\lambda_a\) so that
		\(CM_x\lambda_a^{N-2s}\le1\), we have
		\[
		u_\lambda(y)\le u(y)
		\quad\text{for }0<|y|\le\delta_0,\quad 0<\lambda\le\lambda_a.
		\]
		
		Let \(R>1\) be large. On the compact set
		\[
		K_{\delta_0,R}
		=
		\{y:\delta_0\le |y|\le R,\ |y-x|\ge 2r_0\},
		\]
		we have
		\[
		m_{\delta_0,R}:=\min_{K_{\delta_0,R}}u>0.
		\]
		For \(y\in K_{\delta_0,R}\) and \(\lambda\) small,
		\(y^\lambda\in\overline{B_{r_0}(x)}\);
		hence
		\[
		u_\lambda(y)
		\le
		C\lambda^{N-2s}.
		\]
		After decreasing \(\lambda\), this gives
		\[
		u_\lambda(y)\le \frac12m_{\delta_0,R}\le u(y)
		\quad\text{on }K_{\delta_0,R}.
		\]
		
		It remains to treat the far field. Choose a closed ball
		\[
		B_*\subset\R^N\setminus\{0\}
		\]
		such that
		\[
		\int_{B_*}F(z)\,dz>0 .
		\]
		Then, by \eqref{eq3.1}, for \(|y|\) large,
		\[
		u(y)\ge c|y|^{-(N-2s)}.
		\]
		For the Kelvin transform, \(y^\lambda\in\overline{B_{r_0}(x)}\) when
		\(|y|\) is large and \(0<\lambda\le\lambda_a\), after decreasing
		\(\lambda_a\) if necessary. Hence
		\[
		u_\lambda(y)
		\le
		C\lambda^{N-2s}|y|^{-(N-2s)}.
		\]
		After decreasing the common upper bound for \(\lambda\), we obtain
		\[
		u_\lambda(y)\le u(y)
		\quad\text{for }|y|\ge R.
		\]
		
		The constants have been chosen in the order
		\[
		r_0,\quad L,\quad \delta_0,\quad R,\quad \lambda_0.
		\]
		Combining the preceding estimates, possible negative points of \(w_\lambda\)
		can only lie in the thin annulus
		\[
		\Omega_\lambda=\{y:\lambda<|y-x|<L\lambda\}.
		\]
		For sufficiently small \(\lambda\), Lemma~\ref{Lem3.5} applies to
		\(\Omega_\lambda\), and gives
		\[
		w_\lambda\ge0
		\quad\text{in }\Omega_\lambda.
		\]
		Therefore \(w_\lambda\ge0\) in \(\Sigma_\lambda\) for all sufficiently small
		\(\lambda>0\).
	\end{proof}
	
	\begin{lemma}
		\label{Lem3.7}
		Let \(0<\lambda_*<|x|\), and assume that
		\[
		w_{\lambda_*}\ge0
		\quad \text{in }\Sigma_{\lambda_*},
		\]
		with strict inequality in
		\(\Sigma_{\lambda_*}\setminus\partial B_{\lambda_*}(x)\). Then there exist
		\(R>0\) and \(\delta>0\) such that
		\[
		w_\lambda(y)\ge0
		\quad \text{for } |y|\ge R,
		\quad \lambda\in[\lambda_*,\lambda_*+\delta].
		\]
	\end{lemma}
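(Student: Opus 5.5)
We plan to prove Lemma~\ref{Lem3.7} by comparing the decay rates of \(u\) and \(u_\lambda\) at infinity through the integral identity \eqref{eq3.4}. Both functions behave like \(|y|^{-(N-2s)}\) at infinity, so the comparison must be made at the level of the limiting coefficients.

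\emph{Asymptotics of \(u_\lambda\).} As \(|y|\to\infty\), one has \(y^\lambda\to x\) uniformly for \(\lambda\in[\lambda_*,\lambda_*+\delta_1]\), with \(\lambda_*+\delta_1<|x|\). Continuity of \(u\) near \(x\) then gives
\[
|y|^{N-2s}u_\lambda(y)\to\lambda^{N-2s}u(x),
\]
uniformly in \(\lambda\) on this interval.

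\emph{Asymptotics of \(w_\lambda\).} We work with \eqref{eq3.4} for \(\lambda=\lambda_*\) and then pass to nearby \(\lambda\). For \(z\) in a fixed compact set \(B\subset\Sigma_{\lambda_*}\setminus\{0\}\) away from \(\partial B_{\lambda_*}(x)\), the kernel satisfies
\[
|y|^{N-2s}K_{x,\lambda}(y,z)\to 1-\left(\frac{\lambda}{|z-x|}\right)^{N-2s}>0
\quad\text{as }|y|\to\infty .
\]
Hypothesis \eqref{eq3.8} gives \(F-F_{\lambda_*}\ge0\) on \(\Sigma_{\lambda_*}\). This difference is not identically zero: otherwise the representation \eqref{eq3.4}, together with \(R_{\lambda_*}\ge0\), would force \(w_{\lambda_*}\) to vanish, contradicting the strict inequality. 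Because \(F-F_{\lambda_*}\) is continuous away from the origin, it is positive on some ball \(B\). Fatou's lemma then yields
\[
\liminf_{|y|\to\infty}|y|^{N-2s}w_{\lambda_*}(y)\ \ge\ c_0>0 .
\]

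\emph{Perturbation in \(\lambda\) (main obstacle).} We need the lower bound to hold uniformly for \(\lambda\in[\lambda_*,\lambda_*+\delta]\), which the \(\lambda=\lambda_*\) argument alone does not give. We would try the direct approach first. Both \(|y|^{N-2s}u(y)\) and \(|y|^{N-2s}u_\lambda(y)\) admit limits or liminfs at infinity, and the relevant quantities vary continuously in \(\lambda\).
\begin{enumerate}[label=(\roman*)]
\item The function \(u\) does not depend on \(\lambda\). Write \(\ell_u:=\liminf_{|y|\to\infty}|y|^{N-2s}u(y)\), which is bounded below by \eqref{eq3.1} via Fatou.
\item The previous step gives \(\ell_u\ge \lambda_*^{N-2s}u(x)+c_0\).
\item The uniform convergence in the first step gives
\[
\sup_{\lambda\in[\lambda_*,\lambda_*+\delta]}\ \limsup_{|y|\to\infty}\Bigl|\,|y|^{N-2s}u_\lambda(y)-\lambda^{N-2s}u(x)\Bigr|=0 .
\]
\item Choose \(\delta\) so small that \((\lambda_*+\delta)^{N-2s}u(x)<\lambda_*^{N-2s}u(x)+c_0/2\).
\end{enumerate}
Then choose \(R\) with \(|y|^{N-2s}u(y)>\lambda_*^{N-2s}u(x)+3c_0/4\) for \(|y|\ge R\), and, by uniformity, \(|y|^{N-2s}u_\lambda(y)<\lambda_*^{N-2s}u(x)+3c_0/4\) for all \(|y|\ge R\) and all \(\lambda\) in the interval. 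These give \(w_\lambda\ge0\) for \(|y|\ge R\).

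The delicate point is to make sure the liminf of \(|y|^{N-2s}u\) is achieved uniformly, so that a single \(R\) works for all \(\lambda\). The Fatou bound is only a liminf, but this suffices here because \(u\) is independent of \(\lambda\). The uniformity in the first step requires \(\lambda_*+\delta<|x|\), so that \(u\) is continuous on a neighbourhood of \(\overline{B_{\lambda}(x)}\); this holds after shrinking \(\delta\).
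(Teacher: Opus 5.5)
Your strategy matches the paper's proof. You first get a far-field bound \(\liminf_{|y|\to\infty}|y|^{N-2s}w_{\lambda_*}(y)\ge c_0>0\) from \eqref{eq3.4}. You then use the uniform convergence \(|y|^{N-2s}u_\lambda(y)\to\lambda^{N-2s}u(x)\) together with continuity in \(\lambda\). Your bookkeeping through \(\ell_u\) is just a rewriting of the paper's decomposition \(w_\lambda=w_{\lambda_*}+(u_{\lambda_*}-u_\lambda)\). Fatou's lemma is an acceptable substitute for the paper's uniform kernel limit on a compact set \(E\).

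There is one incorrect step. You claim \(F-F_{\lambda_*}\not\equiv0\), because otherwise \eqref{eq3.4} together with \(R_{\lambda_*}\ge0\) would force \(w_{\lambda_*}\) to vanish. That inference is valid only when \(m=0\). If \(m>0\) and \(F=F_{\lambda_*}\) almost everywhere, then \eqref{eq3.4} gives \(w_{\lambda_*}=R_{\lambda_*}\). This is strictly positive in the open exterior, so it is fully consistent with the strict-inequality hypothesis and gives no contradiction.

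The fix is the paper's case split. For \(m>0\), take \(c_0\) directly from the remainder: \(|y|^{N-2s}R_{\lambda_*}(y)\to mc_{N,s}\bigl[1-(\lambda_*/|x|)^{N-2s}\bigr]>0\), and the integral term in \eqref{eq3.4} is nonnegative. For \(m=0\), your nondegeneracy argument and the Fatou bound go through as written. With this correction, the rest of your argument completes the proof.
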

	
	\begin{proof}
		We obtain a positive far-field lower bound for \(w_{\lambda_*}\). By
		Lemma~\ref{Lem3.1}, \(F_{\lambda_*}\le F\). If \(m>0\), the claimed bound
		therefore follows directly from the nonnegative remainder
		\(R_{\lambda_*}\). Indeed,
		\[
		R_{\lambda_*}(y)\ge c|y|^{-(N-2s)}
		\quad\text{for } |y| \text{ large},
		\]
		because
		\[
		|y|^{N-2s}\bigl(\Phi(y)-\Phi_{\lambda_*}(y)\bigr)\to
		c_{N,s}\left[
		1-\left(\frac{\lambda_*}{|x|}\right)^{N-2s}
		\right]>0.
		\]
		
		It remains to consider the case \(m=0\). Then Lemma~\ref{Lem3.1} gives
		\(F_{\lambda_*}\le F\) in \(\Sigma_{\lambda_*}\). If
		\(F-F_{\lambda_*}=0\) almost everywhere there, then \eqref{eq3.4} would
		give \(w_{\lambda_*}\equiv0\), contrary to the assumed strict positivity.
		Thus \(F-F_{\lambda_*}\) is positive on a set of positive measure.
		Moreover, \(F\) is locally continuous away from the origin, because
		\(F=(-\Delta)^s u\) and
		\(u\in C_{\loc}^{2s+\beta}(\R^N\setminus\{0\})\). By \eqref{eq3.3}, the
		same is true for \(F_{\lambda_*}\) on compact sets avoiding the origin and
		the boundary sphere. Hence there are a compact set
		\[
		E\subset
		\{z:|z-x|>\lambda_*,\ z\ne0\}
		\]
		of positive measure, a number \(c_E>0\), and a positive distance between
		\(E\) and
		\(\partial B_{\lambda_*}(x)\cup\{0\}\), such that
		\[
		F(z)-F_{\lambda_*}(z)\ge c_E
		\quad\text{for }z\in E.
		\]
		Since \(E\) is compactly contained in \(\Sigma_{\lambda_*}\), one has,
		uniformly for \(z\in E\),
		\[
		|y|^{N-2s}K_{x,\lambda_*}(y,z)
		\longrightarrow
		1-\left(\frac{\lambda_*}{|x-z|}\right)^{N-2s}>0
		\quad\text{as }|y|\to+\infty.
		\]
		The positive limiting function has a positive minimum on \(E\). Hence
		\[
		K_{x,\lambda_*}(y,z)\ge c|y|^{-(N-2s)}
		\quad\text{for }z\in E,\ |y|\text{ sufficiently large},
		\]
		and therefore \eqref{eq3.4} gives the same lower bound for \(w_{\lambda_*}\). Thus, in all cases, there exist \(c_0>0\) and \(R_0>0\) such that
		\begin{equation}
			\label{eq3.24}
			w_{\lambda_*}(y)\ge c_0|y|^{-(N-2s)}
			\quad\text{for } |y|\ge R_0 .
		\end{equation}
		
		We compare \(u_\lambda\) with \(u_{\lambda_*}\) at infinity. Choose
		\(\delta_0>0\) such that \(\lambda_*+\delta_0<|x|\). For
		\(\lambda\in[\lambda_*,\lambda_*+\delta_0]\), set
		\[
		A_\lambda(y)=|y|^\gamma u_\lambda(y)
		=
		\left(\frac{\lambda |y|}{|y-x|}\right)^\gamma
		u\left(x+\frac{\lambda^2(y-x)}{|y-x|^2}\right).
		\]
		The convergence
		\[
		A_\lambda(y)\longrightarrow a_\lambda:=\lambda^\gamma u(x)
		\quad\text{as }|y|\to+\infty
		\]
		is uniform in this interval. Indeed,
		\[
		\sup_\lambda|y^\lambda-x|
		\le
		\frac{(\lambda_*+\delta_0)^2}{|y-x|}
		\longrightarrow0,
		\qquad
		\frac{|y|}{|y-x|}\longrightarrow1,
		\]
		and \(u\) is uniformly continuous near \(x\). Choose first
		\(0<\delta\le\delta_0\) so small that
		\[
		\bigl|a_\lambda-a_{\lambda_*}\bigr|
		\le \frac{c_0}{8}
		\quad\text{for } \lambda\in[\lambda_*,\lambda_*+\delta].
		\]
		Then increase \(R_0\), if necessary, so that
		\[
		\sup_{\lambda\in[\lambda_*,\lambda_*+\delta]}
		|A_\lambda(y)-a_\lambda|
		\le\frac{c_0}{8}
		\quad\text{for }|y|\ge R_0.
		\]
		The same estimate includes \(\lambda=\lambda_*\). Therefore
		\[
		|u_\lambda(y)-u_{\lambda_*}(y)|
		\le
		\frac{3c_0}{8}|y|^{-\gamma}
		\]
		for every \(|y|\ge R_0\) and every \(\lambda\in[\lambda_*,\lambda_*+\delta]\). Combining this with \eqref{eq3.24}, we get
		\[
		w_\lambda(y)
		=
		w_{\lambda_*}(y)+u_{\lambda_*}(y)-u_\lambda(y)
		\ge
		\frac{5c_0}{8}|y|^{-\gamma}
		\ge
		\frac{c_0}{2}|y|^{-\gamma}
		\]
		for \(|y|\ge R_0\) and \(\lambda\in[\lambda_*,\lambda_*+\delta]\). This proves the lemma.
	\end{proof}
	
	\begin{lemma}
		\label{Lem3.8}
		Let \(0<\lambda_*<|x|\). If
		\[
		u_\rho\le u
		\quad \text{in }\Sigma_\rho
		\quad \text{for every }0<\rho<\lambda_*,
		\]
		then there exists \(\delta>0\) such that
		\[
		u_\lambda\le u
		\quad \text{in }\Sigma_\lambda
		\quad \text{for every }0<\lambda<\lambda_*+\delta.
		\]
	\end{lemma}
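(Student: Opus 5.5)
By continuity in \(\rho\), the hypothesis passes to the limit: \(w_{\lambda_*}\ge0\) in \(\Sigma_{\lambda_*}\). The plan is first to upgrade this to strict positivity off the sphere. Then we use Lemma~\ref{Lem3.7} for the far field, uniform continuity on a compact annular region, and Lemma~\ref{Lem3.4} for the thin shell near \(\partial B_\lambda(x)\).

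\emph{Step 1 (strict inequality).} Lemma~\ref{Lem3.1} gives \(F_{\lambda_*}\le F\) on \(\Sigma_{\lambda_*}\setminus\{0\}\), and \(K_{x,\lambda_*}>0\) and \(R_{\lambda_*}\ge0\). Hence \eqref{eq3.4} yields \(w_{\lambda_*}\ge0\). Suppose \(w_{\lambda_*}(y_0)=0\) at some \(y_0\) with \(|y_0-x|>\lambda_*\). Then \(R_{\lambda_*}(y_0)=0\), which forces \(m=0\) by the strictness noted in the proof of \eqref{eq3.7}. It also forces \(F=F_{\lambda_*}\) almost everywhere in \(\Sigma_{\lambda_*}\), since the kernel is strictly positive there. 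From \eqref{eq3.4} this gives \(w_{\lambda_*}\equiv0\) on \(\Sigma_{\lambda_*}\), so \(u=u_{\lambda_*}\). Continuity of \(u\) near \(\lambda_*^2x/|x|^2\)... rather, evaluate at the point whose reflection is the puncture. The point \(0\) lies in \(\Sigma_{\lambda_*}\), because \(|0-x|=|x|>\lambda_*\). So \(u=u_{\lambda_*}\) near \(0\) would force \(u\) to stay bounded near \(0\), since \(u_{\lambda_*}(y)\to(\lambda_*/|x|)^{\gamma}u(x-\lambda_*^2x/|x|^2)\), which is finite. This contradicts the blow-up condition \eqref{eq:blowup}. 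Thus \(w_{\lambda_*}>0\) in \(\Sigma_{\lambda_*}\setminus\partial B_{\lambda_*}(x)\), except that \(0\) itself is excluded; near \(0\) we have \(w_{\lambda_*}\to+\infty\).

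\emph{Step 2 (far field and near the puncture).} Lemma~\ref{Lem3.7} supplies \(R,\delta_1\) with \(w_\lambda\ge0\) for \(|y|\ge R\) and \(\lambda\in[\lambda_*,\lambda_*+\delta_1]\). Near the origin, \(u_\lambda\) is bounded uniformly for \(\lambda\) close to \(\lambda_*\). Here we use \(\lambda_*+\delta<|x|\): the reflected points stay in a compact set avoiding \(0\), because \(0^\lambda\) lies at distance \(\lambda^2/|x|<\lambda\) from \(x\). Combined with \(u\to\infty\), this gives some \(\delta_0>0\) with \(w_\lambda\ge0\) in \(B_{\delta_0}\setminus\{0\}\).

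\emph{Step 3 (compact middle region and narrow shell).} Fix the constant \(\ell_0\) of Lemma~\ref{Lem3.4} for \(\lambda\in[\lambda_*,\lambda_*+\delta_1]\) and radius \(R\), and take \(\ell\le\ell_0/2\). On the compact set \(\{|y|\le R,\ |y|\ge\delta_0,\ |y-x|\ge\lambda_*+\ell\}\), which avoids \(0\), \(w_{\lambda_*}\) has a positive minimum. The map \((\lambda,y)\mapsto w_\lambda(y)\) is jointly continuous there, so \(w_\lambda>0\) on this set for \(\lambda\in[\lambda_*,\lambda_*+\delta]\) with \(\delta\) small and \(\delta<\ell\). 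The remaining possible negative set is contained in \(\{\lambda<|y-x|<\lambda_*+\ell\}\cap B_R\subset\{\lambda<|y-x|<\lambda+\ell_0\}\). Lemma~\ref{Lem3.4} then gives \(w_\lambda\ge0\) there as well. Values \(\lambda<\lambda_*\) are covered by the hypothesis.

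\emph{Main obstacle.} The delicate point is Step 1, which needs the strong-comparison dichotomy together with the blow-up condition to rule out \(u\equiv u_{\lambda_*}\). One must also check the joint continuity of \(w_\lambda\) in \((\lambda,y)\) and keep all constants uniform in \(\lambda\).
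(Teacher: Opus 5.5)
Your proposal is correct and follows the paper's proof essentially step for step. You pass to the limit \(\rho\uparrow\lambda_*\), and you get strict positivity off \(\partial B_{\lambda_*}(x)\) by forcing \(m=0\) and \(F=F_{\lambda_*}\) a.e., then contradicting \(w_{\lambda_*}\equiv0\) with the blow-up \eqref{eq:blowup}. After that you combine Lemma~\ref{Lem3.7} at infinity, uniform boundedness of \(u_\lambda\) near the origin, joint continuity on a compact middle region, and Lemma~\ref{Lem3.4} on the remaining thin shell, with only immaterial differences in how you choose \(\lambda_0\), the shell width and \(\delta\).
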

	
	\begin{proof}
		For \(j\ge1\), set
		\[
		K_j
		=
		\left\{
		y\in\R^N:
		j^{-1}\le|y|\le j,\quad
		|y-x|\ge\lambda_*+j^{-1}
		\right\}.
		\]
		As \(\rho\uparrow\lambda_*\), the reflected sets
		\(\{y^\rho:y\in K_j\}\) remain in a fixed compact subset of
		\(\R^N\setminus\{0\}\). The continuity of \(u\) therefore gives
		\[
		u_\rho\longrightarrow u_{\lambda_*}
		\quad\text{uniformly on }K_j.
		\]
		The assumed inequalities imply \(w_{\lambda_*}\ge0\) on every \(K_j\).
		Exhausting the interior of \(\Sigma_{\lambda_*}\setminus\{0\}\) and observing
		that inversion fixes the boundary sphere, we obtain
		\[
		w_{\lambda_*}\ge0
		\quad \text{in }\Sigma_{\lambda_*}.
		\]
		The inequality is strict in
		\(\Sigma_{\lambda_*}\setminus\partial B_{\lambda_*}(x)\). Indeed, assume that equality occurs at an interior point \(y_0\). By Lemma~\ref{Lem3.1}, we have
		\[
		F_{\lambda_*}\le F
		\quad\text{in }\Sigma_{\lambda_*}.
		\]
		Since \(K_{x,\lambda_*}>0\) and \(R_{\lambda_*}\ge0\), formula \eqref{eq3.4} implies both
		\[
		F-F_{\lambda_*}=0
		\quad\text{a.e. in }\Sigma_{\lambda_*}
		\]
		and
		\[
		R_{\lambda_*}(y_0)=0.
		\]
		If \(m>0\), then \(R_{\lambda_*}(y)>0\) for every interior point \(y\in\Sigma_{\lambda_*}\setminus\partial B_{\lambda_*}(x)\), a contradiction. Hence \(m=0\). In that case \eqref{eq3.4} gives \(w_{\lambda_*}\equiv0\) in \(\Sigma_{\lambda_*}\). This is impossible because, as \(y\to0\) through \(\Sigma_{\lambda_*}\setminus\{0\}\), one has \(u(y)\to+\infty\), whereas \(u_{\lambda_*}(y)\) remains bounded. Thus \(w_{\lambda_*}(y)\to+\infty\), contradicting \(w_{\lambda_*}\equiv0\).
		
		Choose \(\delta_0>0\) such that
		\[
		\lambda_*+\delta_0<|x|.
		\]
		By Lemma~\ref{Lem3.7}, after decreasing \(\delta_0\) if necessary, there is
		\(R>0\) such that
		\[
		w_\lambda(y)\ge0
		\quad\text{for } |y|\ge R,
		\quad \lambda\in[\lambda_*,\lambda_*+\delta_0].
		\]
		
		Near the origin, the reflected points
		\[
		y^\lambda=x+\frac{\lambda^2(y-x)}{|y-x|^2},
		\qquad
		\lambda\in[\lambda_*,\lambda_*+\delta_0],
		\]
		remain in a fixed compact subset of \(\R^N\setminus\{0\}\), because
		\(\lambda_*+\delta_0<|x|\). More precisely, their limiting distance from
		the origin is bounded below uniformly by
		\[
		|x|-\frac{(\lambda_*+\delta_0)^2}{|x|}>0.
		\]
		The Kelvin prefactor is uniformly bounded there.
		Thus \(u_\lambda(y)\) is uniformly bounded, whereas \(u(y)\to+\infty\).
		Hence there exists
		\(\rho_0>0\) such that
		\[
		w_\lambda(y)\ge0
		\quad\text{for }0<|y|<\rho_0,
		\quad \lambda\in[\lambda_*,\lambda_*+\delta_0].
		\]
		
		Apply Lemma~\ref{Lem3.4} with, for example,
		\[
		\lambda_0=\frac{\lambda_*}{2},
		\qquad
		\lambda_1=\lambda_*+\delta_0,
		\]
		and with the above \(R\). Let \(\ell_0>0\) be the corresponding width.
		Choose \(0<\delta\le\delta_0\) so small that
		\[
		\delta<\frac{\ell_0}{4}.
		\]
		Set
		\[
		K=
		\left\{
		y\in\R^N:
		\rho_0\le |y|\le R,\quad
		|y-x|\ge \lambda_*+\frac{3\ell_0}{4}
		\right\}.
		\]
		This is a compact subset of
		\(\Sigma_{\lambda_*}\setminus\partial B_{\lambda_*}(x)\). If \(K\ne
		\varnothing\), strict positivity gives
		\(\min_Kw_{\lambda_*}>0\); if \(K=\varnothing\), the following compact step
		is vacuous. The reflected images of \(K\) remain a positive distance from
		the puncture for all \(\lambda\) in the chosen interval. By uniform
		continuity in \((y,\lambda)\), after decreasing \(\delta\) if necessary, we have
		\[
		w_\lambda(y)\ge0
		\quad\text{for } y\in K,
		\quad \lambda\in[\lambda_*,\lambda_*+\delta].
		\]
		Since \(\delta<\ell_0/4\), the condition \(|y-x|\ge\lambda+\ell_0\) implies \(y\in K\) whenever \(y\in B_R\setminus B_{\rho_0}(0)\) and \(\lambda\in[\lambda_*,\lambda_*+\delta]\).
		Combining this with the origin and far-field estimates, any possible negative
		point of \(w_\lambda\) must lie in the narrow region
		\[
		\Omega_\lambda
		=
		\{y:\lambda<|y-x|<\lambda+\ell_0\}\cap B_R .
		\]
		Since \(w_\lambda\ge0\) in \(\Sigma_\lambda\setminus\Omega_\lambda\), the
		narrow region principle, Lemma~\ref{Lem3.4}, gives
		\[
		w_\lambda\ge0
		\quad\text{in }\Omega_\lambda.
		\]
		Therefore
		\[
		w_\lambda\ge0
		\quad\text{in }\Sigma_\lambda
		\]
		for every \(\lambda\in[\lambda_*,\lambda_*+\delta]\). Together with the
		assumption for \(0<\lambda<\lambda_*\), this proves the lemma.
	\end{proof}
	
	\begin{proposition}
		\label{Prop3.9}
		For every \(x\in\R^N\setminus\{0\}\) and every \(0<\lambda<|x|\),
		\[
		u_{x,\lambda}(y)\le u(y)
		\quad \text{for } y\in\Sigma_{x,\lambda}\setminus\{0\}.
		\]
	\end{proposition}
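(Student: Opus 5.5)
The plan is a standard continuity argument in \(\lambda\), using Lemma~\ref{Lem3.6} to start and Lemma~\ref{Lem3.8} to continue. Fix \(x\in\R^N\setminus\{0\}\) and define
\[
\bar\lambda(x)=\sup\Bigl\{\lambda\in(0,|x|):\ u_{x,\rho}\le u \text{ in }\Sigma_{x,\rho}\setminus\{0\}\text{ for every }0<\rho<\lambda\Bigr\}.
\]
By Lemma~\ref{Lem3.6}, the set on the right contains \((0,\lambda_0(x))\). Hence it is nonempty and \(\bar\lambda(x)\ge\lambda_0(x)>0\). The goal is to show \(\bar\lambda(x)=|x|\). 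Once this holds, every \(\lambda\in(0,|x|)\) lies below the supremum, so \(u_{x,\lambda}\le u\) in \(\Sigma_{x,\lambda}\setminus\{0\}\), which is the assertion.

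Suppose instead that \(\lambda_*:=\bar\lambda(x)<|x|\). By the definition of the supremum, \(u_\rho\le u\) in \(\Sigma_\rho\) for every \(0<\rho<\lambda_*\). These are exactly the hypotheses of Lemma~\ref{Lem3.8} with this \(\lambda_*\), since \(0<\lambda_*<|x|\).

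Lemma~\ref{Lem3.8} then yields \(\delta>0\) such that \(u_\lambda\le u\) in \(\Sigma_\lambda\) for all \(0<\lambda<\lambda_*+\delta\). We may shrink \(\delta\) so that \(\lambda_*+\delta<|x|\). Then \(\lambda_*+\delta\) belongs to the defining set, which contradicts the maximality of \(\lambda_*\). Therefore \(\bar\lambda(x)=|x|\).

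The substantive difficulties are all contained in the lemmas already proved. Within Lemma~\ref{Lem3.8}, the key points are:
\begin{itemize}
\item the strict inequality \(w_{\lambda_*}>0\) away from the sphere, which uses either \(m>0\) or the blow-up condition \eqref{eq:blowup};
\item the far-field control of Lemma~\ref{Lem3.7};
\item the narrow region principle of Lemma~\ref{Lem3.4}.
\end{itemize}
In the present step, the only care needed is bookkeeping. First, the condition \(\lambda_*<|x|\) guarantees that the reflected points \(y^\lambda\) for \(\lambda\le\lambda_*+\delta\) stay away from the puncture, as required in Lemma~\ref{Lem3.8}. Second, the inequality is asserted only on \(\Sigma_{x,\lambda}\setminus\{0\}\); the origin lies in \(\Sigma_{x,\lambda}\) because \(\lambda<|x|\), so it must be excluded. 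Finally, the exceptional Kelvin point \(x-\lambda^2x/|x|^2\) lies inside \(B_\lambda(x)\), so it plays no role.

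I expect no real obstacle beyond checking that the hypothesis of Lemma~\ref{Lem3.8} is stated for all \(\rho<\lambda_*\) rather than at \(\lambda_*\) itself. This is exactly what the definition of \(\bar\lambda(x)\) provides.
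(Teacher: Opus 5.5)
Your proposal is correct and follows the paper's proof essentially verbatim: you use the same supremum \(\bar\lambda(x)\), get positivity from Lemma~\ref{Lem3.6}, and reach a contradiction via Lemma~\ref{Lem3.8} if \(\bar\lambda(x)<|x|\). Your shrinking of \(\delta\) so that \(\lambda_*+\delta<|x|\), and your explicit exclusion of the origin from \(\Sigma_{x,\lambda}\), are harmless bookkeeping details that the paper leaves implicit.
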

	
	\begin{proof}
		Define
		\[
		\bar\lambda(x)
		=
		\sup\left\{
		\mu\in(0,|x|):
		u_{x,\lambda}\le u
		\text{ in }\Sigma_{x,\lambda}
		\text{ for every }0<\lambda<\mu
		\right\}.
		\]
		By Lemma~\ref{Lem3.6},
		\[
		\bar\lambda(x)>0.
		\]
		If \(\bar\lambda(x)<|x|\), then Lemma~\ref{Lem3.8} extends the comparison
		beyond \(\bar\lambda(x)\), contradicting the definition of
		\(\bar\lambda(x)\). Hence
		\[
		\bar\lambda(x)=|x|.
		\]
		This proves the proposition.
	\end{proof}
	
	\begin{lemma}
		\label{Lem3.10}
		\(u\) is radially symmetric
		about the origin.
	\end{lemma}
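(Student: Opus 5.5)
Proposition~\ref{Prop3.9} already gives the Kelvin comparison for every center \(x\ne0\) and every radius \(0<\lambda<|x|\). The plan is to let \(\lambda\uparrow|x|\), read off a one-sided inequality across a hyperplane, and then symmetrize it.

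\emph{Step 1: the limiting inequality at \(\lambda=|x|\).} For fixed \(y\in\Sigma_{x,|x|}\setminus\{0\}\) with \(|y-x|>|x|\), the point \(y^{x,\lambda}\) stays away from the origin as \(\lambda\uparrow|x|\). Indeed, \(y^{x,|x|}=0\) only when \(y-x=-(|x|^2/|y-x|^2)(y-x)\), which forces \(|y-x|=|x|\). So continuity of \(u\) lets us pass to the limit in \(u_{x,\lambda}(y)\le u(y)\), which gives
\[
\Bigl(\frac{|x|}{|y-x|}\Bigr)^{N-2s}u\Bigl(x+\frac{|x|^2(y-x)}{|y-x|^2}\Bigr)\le u(y),\qquad |y-x|>|x|,\ y\ne0 .
\]

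\emph{Step 2: the blow-up limit \(x\to\infty\).} Fix a unit vector \(e\) and take \(x=Re\) with \(R\to\infty\). Then \(B_R(Re)\) increases to the half space \(\{z\cdot e>0\}\), and the inversion about \(\partial B_R(Re)\) converges locally uniformly, away from the limiting sphere, to the reflection \(z\mapsto z-2(z\cdot e)e\). The Kelvin prefactor \(R/|y-Re|\) tends to \(1\). Passing to the limit by continuity of \(u\) yields
\[
u\bigl(y-2(y\cdot e)e\bigr)\le u(y)\qquad\text{for }y\cdot e<0,\ y\ne0 .
\]
The condition \(|y-Re|>R\) holds for large \(R\) exactly when \(y\cdot e<0\), up to the boundary. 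Since the reflection maps \(\{y\cdot e<0\}\setminus\{0\}\) to itself minus a point, we may reparametrize \(y\mapsto y^{\ast}\). Then \(u(y)\le u(y^\ast)\) for \(y\cdot e>0\), where \(y^\ast\) is the reflected point.

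\emph{Step 3: symmetrization.} Applying Step~2 to both \(e\) and \(-e\) gives \(u(y)\le u(y^\ast)\) and \(u(y^\ast)\le u(y)\) for points with \(y\cdot e>0\). Hence \(u\) is symmetric with respect to every hyperplane through the origin. Compositions of such reflections generate \(O(N)\), so \(u\) is radial.

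The main obstacle is justifying the limit in Step~2 uniformly. One must check that for each fixed \(y\) with \(y\cdot e<0\), the reflected point \(y^{Re,R}\) converges to \(y-2(y\cdot e)e\) and stays away from \(0\). This follows from an explicit expansion: with \(z=y-Re\),
\[
Re+\frac{R^2z}{|z|^2}\;\longrightarrow\; y-2(y\cdot e)e .
\]
The limit uses \(|z|^2=R^2-2Ry\cdot e+|y|^2\), and it is elementary but must be done carefully. Points on the hyperplane itself follow by continuity.

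As an alternative, one could avoid \(x\to\infty\) entirely by appealing to the standard calculus lemma of Li--Zhu type. Proposition~\ref{Prop3.9} for all centers and all radii below \(|x|\) characterizes functions that are radially nonincreasing about \(0\). However, the direct limiting argument above seems simpler to present.
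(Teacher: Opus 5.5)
Your proposal is correct and is essentially the paper's argument: the paper takes \(x=ae\) and \(\lambda=a-\varepsilon\) in Proposition~\ref{Prop3.9}, lets \(\varepsilon\to0^+\) and then \(a\to+\infty\) to get \(u(y-2(y\cdot e)e)\le u(y)\) for \(y\cdot e<0\), and then swaps \(e\) with \(-e\), exactly as in your Steps 1--3. Your expansion of \(y^{Re,R}\) makes explicit a limit the paper leaves implicit; just correct the slip that the reflection maps \(\{y\cdot e<0\}\) onto \(\{y\cdot e>0\}\), not onto itself.
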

	
	\begin{proof}
		Fix a unit vector \(e\). For \(a>0\) and \(0<\varepsilon<a\), take
		\[
		x=ae,
		\qquad
		\lambda=a-\varepsilon.
		\]
		If \(y\cdot e<0\) and \(a\) is large, then \(y\in\Sigma_{x,\lambda}\).
		Proposition~\ref{Prop3.9} gives
		\[
		\left(\frac{a-\varepsilon}{|y-ae|}\right)^{N-2s}
		u\left(ae+\frac{(a-\varepsilon)^2(y-ae)}{|y-ae|^2}\right)
		\le u(y).
		\]
		Letting \(\varepsilon\to0^+\) and then \(a\to+\infty\), we obtain
		\[
		u(y-2(y\cdot e)e)\le u(y)
		\quad \text{for }y\cdot e<0.
		\]
		Replacing \(e\) by \(-e\) gives the reverse inequality. Hence \(u\) is
		symmetric with respect to every hyperplane through the origin.
	\end{proof}
	
	\begin{lemma}
		\label{Lem3.11}
		The radial profile of \(u\)
		is nonincreasing.
	\end{lemma}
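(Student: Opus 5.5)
We plan to derive monotonicity directly from Proposition~\ref{Prop3.9}, using the radial symmetry from Lemma~\ref{Lem3.10}. Write \(u(y)=U(|y|)\). We must show \(U(r_2)\le U(r_1)\) whenever \(0<r_1<r_2\).

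The choice of center and radius is the following. Fix a unit vector \(e\) and take \(x=ae\) with \(a>0\) large, and \(\lambda<a\) close to \(a\). The sphere \(\partial B_\lambda(x)\) passes near the origin, and reflection across it approximates reflection across the hyperplane \(\{y\cdot e=(a-\lambda)/2\}\) near the origin. More directly, we choose parameters so that a given point \(y\) with \(|y|=r_2\) has its Kelvin image at distance \(r_1\) from the origin.

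The cleaner route uses the hyperplane limit, as in Lemma~\ref{Lem3.10}. Take \(x=ae\) and \(\lambda=a-2t\) with \(t>0\) fixed. As \(a\to+\infty\), the point \(y^\lambda\) converges to the reflection \(y-2(y\cdot e-t)e\) of \(y\) across \(\{y\cdot e=t\}\), and the prefactor \((\lambda/|y-ae|)^{N-2s}\) tends to \(1\). In addition, \(y\in\Sigma_{x,\lambda}\) for large \(a\) precisely when \(y\cdot e<t\). Proposition~\ref{Prop3.9} together with continuity of \(u\) away from the origin then gives
\[
u\bigl(y-2(y\cdot e-t)e\bigr)\le u(y)\qquad\text{for }y\cdot e<t,\ y\ne0,\ \text{reflected point}\ne0.
\]
Now let \(0<r_1<r_2\), set \(t=(r_1+r_2)/2>0\), and choose \(y=r_1e\). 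Then \(y\cdot e=r_1<t\), and the reflected point is \((2t-r_1)e=r_2e\). The inequality yields \(U(r_2)\le U(r_1)\).

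The main obstacle is justifying the limit. We must check the expansion
\[
y^\lambda=ae+\frac{\lambda^2(y-ae)}{|y-ae|^2}\longrightarrow y-2(y\cdot e-t)e
\]
locally uniformly in \(y\). Writing \(y-ae=-(a-y\cdot e)e+y'\), one has \(|y-ae|^2=a^2-2a\,y\cdot e+|y|^2\), so that \(\lambda^2/|y-ae|^2=1-4t/a+2y\cdot e/a+O(a^{-2})\). Multiplying by \(y-ae\) then gives the constant correction \(4t-2y\cdot e\) in the \(e\)-direction, which is exactly the stated reflection. We must also ensure that the limiting point stays away from the puncture, which holds for our choice since \(r_2e\ne0\). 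Continuity of \(u\) at \(r_2e\) then passes the inequality to the limit. Finally, we need the ordering condition \(0<\lambda<|x|\) required by Proposition~\ref{Prop3.9}; this holds since \(\lambda=a-2t<a\).
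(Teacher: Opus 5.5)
Your limiting hyperplane is off by a factor of two, so the final step does not give the inequality you claim. The sphere $\partial B_\lambda(ae)$ crosses the ray $\{re:r>0\}$ at $(a-\lambda)e$. Hence, as $a\to+\infty$, it approximates the hyperplane $\{y\cdot e=a-\lambda\}$, not $\{y\cdot e=(a-\lambda)/2\}$.

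Your own expansion confirms this. With $\lambda=a-2t$ the correction is $(4t-2y\cdot e)e$, so $y^\lambda\to y-2(y\cdot e-2t)e$, which is the reflection across $\{y\cdot e=2t\}$. The reflection across $\{y\cdot e=t\}$ would need the correction $(2t-2y\cdot e)e$. Similarly, $|y-ae|^2-\lambda^2=2a(2t-y\cdot e)+|y|^2-4t^2$, so for large $a$ membership in $\Sigma_{x,\lambda}$ is governed by $y\cdot e<2t$. Concretely, for $y=r_1e$ and $t=(r_1+r_2)/2$ you get $(r_1e)^{x,\lambda}\to(4t-r_1)e=(r_1+2r_2)e$. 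So what you actually prove is $U(r_1+2r_2)\le U(r_1)$, which is not the claim.

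Since $t>0$ is free, the repair is immediate. Take $\lambda=a-t$, or equivalently keep $\lambda=a-2t$ with $t=(r_1+r_2)/4. Then, exactly,
\[
(r_1e)^{x,\lambda}=\Bigl(2t-r_1-\frac{(t-r_1)^2}{a-r_1}\Bigr)e\longrightarrow r_2e,
\qquad
\Bigl(\frac{\lambda}{a-r_1}\Bigr)^{N-2s}=\Bigl(\frac{a-t}{a-r_1}\Bigr)^{N-2s}\longrightarrow1 .
\]
Also $r_1e\in\Sigma_{x,\lambda}$ because $r_1<t$, and continuity of $u$ at $r_2e\neq0$ finishes the argument.

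With this correction your proof is valid, but it is not the paper's. The paper takes $x=ae$ and $\lambda=\sqrt{(a-r_1)(a-r_2)}$, for which $(r_1e)^{x,\lambda}=r_2e$ holds exactly for every $a>r_2$. Proposition~\ref{Prop3.9} then gives $\bigl((a-r_2)/(a-r_1)\bigr)^{(N-2s)/2}U(r_2)\le U(r_1)$. Only this scalar prefactor has to be sent to $1$, with no expansion of the inversion map and no appeal to continuity of $u$.

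Your hyperplane-limit route gives more: the reflection inequality across every hyperplane $\{y\cdot e=t\}$ with $t>0$. For monotonicity alone, though, the exact on-axis choice is shorter.

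Note also that the ``more direct'' idea in your second paragraph has the roles reversed. The point of $\Sigma_{x,\lambda}$ must be the one at distance $r_1$ from the origin, with Kelvin image at distance $r_2$. A point at distance $r_2$ mapped to distance $r_1$ gives the inequality in the wrong direction. The correct version is precisely the paper's choice.
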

	
	\begin{proof}
		By Lemma~\ref{Lem3.10}, \(u(y)=U(|y|)\). Let \(0<r_1<r_2\), choose a unit vector
		\(e\), and take \(a>r_2\). Set
		\[
		x=ae,
		\qquad
		\lambda=\sqrt{(a-r_1)(a-r_2)}.
		\]
		Then \(0<\lambda<a\), \(r_1e\in\Sigma_{x,\lambda}\), and
		\[
		(r_1e)^{x,\lambda}=r_2e.
		\]
		Proposition~\ref{Prop3.9} yields
		\[
		\left(\frac{\lambda}{a-r_1}\right)^{N-2s}U(r_2)\le U(r_1).
		\]
		Letting \(a\to+\infty\), we obtain
		\[
		U(r_2)\le U(r_1).
		\]
		Thus \(U\) is nonincreasing.
	\end{proof}
	
	\begin{lemma}
		\label{Lem3.12}
		The radial profile of \(u\) is strictly decreasing.
	\end{lemma}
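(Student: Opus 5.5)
We argue by contradiction: if the profile \(U\) were constant on some interval, we will push the moving-spheres comparison to an equality case and apply the strong-comparison part of Lemma~\ref{Lem3.1} and formula \eqref{eq3.4}. By Lemma~\ref{Lem3.11}, \(U\) is nonincreasing. So if it fails to be strictly decreasing, there are \(0<r_1<r_2\) with \(U(r_1)=U(r_2)\), and then \(U\) is constant on \([r_1,r_2]\).

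Fix a unit vector \(e\) and \(a>r_2\). Set \(x=ae\) and \(\lambda=\sqrt{(a-r_1)(a-r_2)}\in(0,|x|)\), exactly as in the proof of Lemma~\ref{Lem3.11}. Then \(y_0=r_1e\) lies in the open exterior \(\{|y-x|>\lambda\}\), because \(a-r_1>\lambda\), and its reflection is \(r_2e\). Proposition~\ref{Prop3.9} gives \(w_{x,\lambda}\ge0\) on \(\Sigma_{x,\lambda}\). At \(y_0\),
\[
w_{x,\lambda}(y_0)=U(r_1)-\left(\frac{\lambda}{a-r_1}\right)^{N-2s}U(r_2)
=U(r_1)\left[1-\left(\frac{a-r_2}{a-r_1}\right)^{\frac{N-2s}{2}}\right]>0 .
\]
So a single sphere does not immediately give a contradiction. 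Instead, we exploit the strict positivity coming from \eqref{eq3.4}.

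The cleaner route is to show that \(w_{x,\lambda}>0\) at every interior point, which is what the proof of Lemma~\ref{Lem3.8} gives. Indeed, equality at an interior point forces \(F=F_\lambda\) a.e. and \(R_\lambda(y_0)=0\). The case \(m>0\) is then excluded by the strict positivity of \(R_\lambda\), and the case \(m=0\) is excluded by the blow-up at the origin. We do not use this to produce a contradiction directly. We use it to get a quantitative gap that survives the limit \(a\to\infty\).

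Write \(w_{x,\lambda}(y_0)\) via \eqref{eq3.4} as \(c_{N,s}\int_{\Sigma_\lambda}K_{x,\lambda}(y_0,z)(F-F_\lambda)\,dz+R_\lambda(y_0)\). After the substitution \(\lambda^2=(a-r_1)(a-r_2)\), the left side equals \(U(r_1)\bigl[1-\bigl((a-r_2)/(a-r_1)\bigr)^{\gamma/2}\bigr]\sim U(r_1)\frac{\gamma(r_2-r_1)}{2a}\). The right side we bound from below, using the Hartree structure and the identity \eqref{eq3.11}. The gap \(u^p-b_\lambda u_\lambda^p\ge u^p(1-(\lambda/|\xi-x|)^{\sigma_p})+(u^p-u_\lambda^p)\) is nonnegative, and \(F-F_\lambda\ge \Hs[u]\,(u^q-a_\lambda u_\lambda^q)\). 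The idea is to show that the right side is also of order \(1/a\), but with a coefficient involving \(U(r_1)-U(r_2)=0\) plus an extra strictly positive contribution. This contribution comes either from the mass term \(m\) or from the nonconstancy of \(U\) near the origin, where \(U\to\infty\). That nonconstancy is guaranteed by \eqref{eq:blowup}. Comparing the \(1/a\) coefficients would then yield a contradiction.

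A simpler alternative is to differentiate in the hyperplane limit. As \(a\to\infty\) the spheres become the reflection across \(\{y\cdot e=t\}\) with \(t=(r_1+r_2)/2\). The limit comparison of Proposition~\ref{Prop3.9} gives \(u(y)\ge u(y^{t})\) for \(y\cdot e<t\), where \(y^t\) is the reflection of \(y\) across that plane. The integral identity passes to the limit as the moving-plane version of \eqref{eq3.4}: \(u(y)-u(y^t)=c_{N,s}\int_{\{z\cdot e<t\}}\bigl(|y-z|^{-\gamma}-|y^t-z|^{-\gamma}\bigr)(F(z)-F(z^t))\,dz+m(\Phi(y)-\Phi(y^t))\). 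Here \(F(z)\ge F(z^t)\) by the same Hartree splitting with weights equal to \(1\). Taking \(y=r_1e\), the left side is \(0\). This forces \(F(z)=F(z^t)\) a.e. in the half space, and \(m=0\) because \(|r_1e|<|r_2e|\). Taking \(z\) near the origin, which lies in the half space since \(t>0\), gives \(F(z)=F(z^t)\) with \(F(z^t)\) bounded as \(z\to0\), while \(F(z)\ge c\,u^q(z)\to\infty\) by \eqref{eq3.17} and \eqref{eq:blowup}. This is a contradiction.

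The main obstacle is justifying the passage to the limit \(a\to\infty\) in \eqref{eq3.4}, that is, dominated convergence for the kernels and the reflected sources. We expect to handle it with the source condition \eqref{eq:source-condition} and Fatou's lemma, applied to the nonnegative integrand.
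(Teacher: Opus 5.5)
Your ``simpler alternative'' is a correct proof. It reaches strictness by a different route from the paper's. The paper proves that $V$ itself is strictly decreasing. In the equality case of the half-space identity \eqref{eq3.25} it gets $F(y)=F(Sy)$ a.e.\ on $P$ and upgrades this to pointwise equality using continuity of $F$ away from the origin. Composing with the reflection in $\{y\cdot e=0\}$ then makes $F$ invariant under translation by $2te$, so a radial nonincreasing $F$ would be constant, which is ruled out by \eqref{eq:source-condition}. Finally the paper adds the nonincreasing term $mc_{N,s}r^{-\gamma}$. You instead assume $U(r_1)=U(r_2)$ and apply the same reflection identity directly to $u=V+m\Phi$ at $y=r_1e$. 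Both nonnegative terms must then vanish, giving $m=0$ and $F(z)=F(z^t)$ a.e.\ on $P$. You contradict this on a small punctured ball around the origin: there $F\ge c\,u^q\to\infty$ by \eqref{eq3.17} and \eqref{eq:blowup}, while $F(z^t)$ stays bounded because $z^t$ lies near $2te\neq0$, where $F$ is continuous. Since this set has positive measure, you need neither the pointwise upgrade nor the periodicity argument, so your route is shorter. The price is that you use the blow-up hypothesis \eqref{eq:blowup}. The paper's strictness argument for $V$ uses only positivity and the source condition, so it does not depend on non-removability. Both are legitimate under the hypotheses of Theorem~\ref{Thm1.2}.

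The ``main obstacle'' you identify is not actually there. The half-space identity is not a limit of \eqref{eq3.4}. It follows directly from \eqref{eq3.1} by splitting $\R^N$ into $P=\{z\cdot e<t\}$ and its reflection, and the subtraction is legitimate because $V(r_1e)$ and $V(r_2e)$ are finite. Likewise, $u(\xi)\ge u(\xi^t)$ on $P$ follows at once from Lemma~\ref{Lem3.11}, since $|\xi|<|\xi^t|$ there, so you need no limiting form of Proposition~\ref{Prop3.9}. If you did pass to the limit, Fatou's lemma in the direction you indicate would suffice: the limiting integrand is nonnegative and the left-hand side $w_{x,\lambda}(r_1e)$ tends to $0$. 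The single-sphere $1/a$ strategy in your earlier paragraphs is only a heuristic and should be dropped.
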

	
	\begin{proof}
		We use the representation \eqref{eq3.1} after radial symmetry and
		nonincreasing behavior have been established. We first record a reflection
		identity for radial convolutions. Let \(0<r_1<r_2\), fix a unit vector
		\(e\), set \(t=(r_1+r_2)/2\), and let \(S\) be reflection in the hyperplane
		\(\{y:y\cdot e=t\}\). On the half-space
		\[
		P=\{y:y\cdot e<t\},
		\]
		one has
		\[
		|y|<|Sy|,
		\qquad
		|r_1e-y|<|r_2e-y|.
		\]
		If \(f\ge0\) is radial and nonincreasing and both convolutions below are
		finite, splitting the integrals over \(P\) and \(S(P)\) gives, for
		\(0<\nu<N\),
		\begin{align}
			\label{eq3.25}
			&\int_{\R^N}|r_1e-y|^{-\nu}f(y)\,dy
			-
			\int_{\R^N}|r_2e-y|^{-\nu}f(y)\,dy \notag\\
			&\quad=
			\int_P
			\left(
			|r_1e-y|^{-\nu}-|r_2e-y|^{-\nu}
			\right)
			\left(f(y)-f(Sy)\right)dy
			\ge0.
		\end{align}
		The identity is legitimate because the two original nonnegative integrals
		are finite.
		
		Rotational invariance of the kernel first shows that \(\Hs[u]\) is radial.
		Since \(u^p\) is radial and nonincreasing, \eqref{eq3.25} with
		\(\nu=\mu\) shows that
		\[
		H=\Hs[u]
		\]
		is radial and nonincreasing. Therefore
		\[
		F=Hu^q
		\]
		is positive, radial, and nonincreasing. Applying \eqref{eq3.25} with
		\(\nu=\gamma=N-2s\) gives
		\[
		V(r_1e)\ge V(r_2e).
		\]
		This inequality is strict. If equality held, the first factor in the
		integrand in \eqref{eq3.25} would be strictly positive almost everywhere
		on \(P\), and hence \(F(y)=F(Sy)\) almost everywhere. Radiality already
		makes \(F\) invariant under reflection in \(\{y:y\cdot e=0\}\). Composing
		the two reflections would make \(F\) invariant almost everywhere under
		translation by \(2te\). The difference \(F(y)-F(Sy)\) is continuous away
		from \(0\) and \(S^{-1}(0)\), because
		\(F=(-\Delta)^su\in C_{\loc}(\R^N\setminus\{0\})\). Its almost-everywhere
		vanishing therefore implies pointwise vanishing wherever both terms are
		defined. The resulting translation invariance can consequently be used on
		the positive ray. A radial nonincreasing function with a nonzero translation period
		is constant: its restriction to the ray
		\(\{re:r>0\}\) is both nonincreasing and periodic, and radiality then
		extends the constant value to every sphere. This is impossible here, because \(F>0\)
		and a positive constant violates \eqref{eq:source-condition}. Thus
		\[
		V(r_1e)>V(r_2e).
		\]
		Finally,
		\[
		U(r)=V(r)+mc_{N,s}r^{-\gamma}.
		\]
		The first term is strictly decreasing, and the second is nonincreasing
		when \(m\ge0\). Hence \(U(r_1)>U(r_2)\).
	\end{proof}
	
	\begin{proof}[Proof of Theorem~\ref{Thm1.2}]
		Proposition~\ref{Prop3.9} gives the moving spheres inequality. Lemma~\ref{Lem3.10}
		yields radial symmetry, Lemma~\ref{Lem3.11} gives nonincreasing behavior,
		and Lemma~\ref{Lem3.12} proves strict radial decrease.
	\end{proof}
	
	\section{Homogeneous singular solutions}
	\label{sec:profiles}
	
	This section proves Theorem~\ref{Thm1.3} by an explicit calculation in the
	radial homogeneous class. 
	
	\begin{lemma}
		\label{Lem4.1}
		Let \(0<s<1\), \(N>2s\), and
		\[
		0<\alpha<N-2s.
		\]
		Then
		\[
		(-\Delta)^s |x|^{-\alpha}
		=
		\Lambda_{N,s}(\alpha)|x|^{-\alpha-2s}
		\quad \text{in }\R^N\setminus\{0\},
		\]
		where \(\Lambda_{N,s}(\alpha)\) is defined by
		\eqref{eq:lambda-coefficient}.
	\end{lemma}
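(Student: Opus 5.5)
The plan is to compute \((-\Delta)^s|x|^{-\alpha}\) via the Fourier transform of homogeneous distributions, using the classical formula
\[
\widehat{|x|^{-\beta}}(\xi)=2^{N-\beta}\pi^{N/2}\frac{\Gamma\left(\frac{N-\beta}{2}\right)}{\Gamma\left(\frac{\beta}{2}\right)}|\xi|^{\beta-N},
\qquad 0<\beta<N,
\]
valid in \(\mathcal S'(\R^N)\) with the Fourier convention of Section~\ref{sec:intro}. Since \(0<\alpha<N-2s<N\), the function \(|x|^{-\alpha}\) is locally integrable and belongs to \(\Ls(\R^N)\): the tail weight gives \(\int_1^\infty r^{N-1-\alpha-N-2s}\,dr<\infty\). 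Hence \((-\Delta)^s|x|^{-\alpha}\) is well defined as a distribution, and pointwise on \(\R^N\setminus\{0\}\) by smoothness there.

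First I will work distributionally. The transform of \(|x|^{-\alpha}\) is \(c_\alpha|\xi|^{\alpha-N}\). Multiplying by \(|\xi|^{2s}\) gives \(c_\alpha|\xi|^{\alpha+2s-N}\), which is locally integrable because \(\alpha+2s>0\). Since \(\beta:=\alpha+2s\in(0,N)\), this is \(c_\alpha/c_{\alpha+2s}\) times the transform of \(|x|^{-\alpha-2s}\). The ratio is
\[
\frac{2^{N-\alpha}\Gamma(\frac{N-\alpha}{2})/\Gamma(\frac{\alpha}{2})}{2^{N-\alpha-2s}\Gamma(\frac{N-\alpha-2s}{2})/\Gamma(\frac{\alpha+2s}{2})}=\Lambda_{N,s}(\alpha),
\]
which matches \eqref{eq:lambda-coefficient}. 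Positivity is immediate because all Gamma arguments are positive.

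The main obstacle is justifying that multiplying the tempered distribution \(\widehat{|x|^{-\alpha}}\) by the nonsmooth symbol \(|\xi|^{2s}\) correctly represents \(\int |x|^{-\alpha}(-\Delta)^s\varphi\,dx\). I would avoid the multiplication issue by using Parseval directly. For \(\varphi\in C_c^\infty\), the transform of \((-\Delta)^s\varphi\) is \(|\xi|^{2s}\widehat\varphi\). This is not Schwartz, but it is continuous, rapidly decaying, and \(O(|\xi|^{2s})\) at the origin. The pairing \(\int |x|^{-\alpha}\psi\) with \(\psi=(-\Delta)^s\varphi=O(|x|^{-N-2s})\) equals \((2\pi)^{-N}\int c_\alpha|\xi|^{\alpha-N}\overline{\cdots}\) by the standard Gaussian-subordination proof of the Riesz formula. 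That formula extends to such \(\psi\) by approximation, using the decay bound of Lemma~\ref{Lem2.1}/\eqref{eq2.3} on one side and integrability of \(|\xi|^{\alpha-N+2s}\) near \(0\) on the other. Alternatively, one can write \(|x|^{-\alpha}=\Gamma(\alpha/2)^{-1}\int_0^\infty t^{\alpha/2-1}e^{-t|x|^2}dt\) and check everything on Gaussians. This yields \((-\Delta)^s|x|^{-\alpha}=\Lambda_{N,s}(\alpha)|x|^{-\alpha-2s}\) in \(\mathcal D'(\R^N)\).

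Finally, to pass to the pointwise statement on \(\R^N\setminus\{0\}\), I use the agreement of the pointwise and distributional realizations of \((-\Delta)^s\) for \(C^2\) functions in \(\Ls\) away from the puncture, as established in the proof of Theorem~\ref{Thm1.1}. Both sides are continuous there, so the identity holds at every \(x\neq0\).
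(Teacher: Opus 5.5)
Your proposal is correct and follows the same route as the paper. The paper also takes the Riesz-kernel formula $\widehat{|x|^{-\beta}}=d_{N,\beta}|\xi|^{\beta-N}$, multiplies by $|\xi|^{2s}$, and identifies $\Lambda_{N,s}(\alpha)=d_{N,\alpha}/d_{N,\alpha+2s}$, citing Stein for justification. Your Gaussian-subordination treatment of the pairing $\int|x|^{-\alpha}(-\Delta)^s\varphi\,dx$, and the passage from $\mathcal D'$ to pointwise equality by continuity away from the origin, supply details that the paper leaves to that reference.
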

	
	\begin{proof}
		With the Fourier convention fixed in Section~\ref{sec:intro}, for
		\(0<\beta<N\),
		\[
		\mathcal F\left(|x|^{-\beta}\right)(\xi)
		=
		d_{N,\beta}|\xi|^{\beta-N},
		\qquad
		d_{N,\beta}
		=
		2^{N-\beta}\pi^{\frac{N}{2}}
		\frac{\Gamma\left(\frac{N-\beta}{2}\right)}{\Gamma\left(\frac{\beta}{2}\right)},
		\]
		and therefore
		\[
		\mathcal F\left(( -\Delta)^s |x|^{-\alpha}\right)(\xi)
		=
		|\xi|^{2s}d_{N,\alpha}|\xi|^{\alpha-N}
		=
		d_{N,\alpha}|\xi|^{\alpha+2s-N}.
		\]
		Since
		\[
		\mathcal F\left(|x|^{-\alpha-2s}\right)(\xi)
		=
		d_{N,\alpha+2s}|\xi|^{\alpha+2s-N},
		\]
		we obtain
		\[
		(-\Delta)^s |x|^{-\alpha}
		=
		\frac{d_{N,\alpha}}{d_{N,\alpha+2s}}|x|^{-\alpha-2s}.
		\]
		The quotient is \(\Lambda_{N,s}(\alpha)\). All gamma-function arguments
		in \eqref{eq:lambda-coefficient} are positive under
		\(0<\alpha<N-2s\), so the coefficient
		is positive. The identity follows from the Riesz-kernel Fourier formula
		recorded in \cite[Chapter~V, Section~1]{Stein}.
	\end{proof}
	
	\begin{lemma}
		\label{Lem4.2}
		Let \(0<\mu<N\) and assume that
		\[
		0<\alpha p<N,
		\qquad
		\mu+\alpha p>N.
		\]
		Then
		\begin{equation}
			\label{eq4.1}
			\int_{\R^N}
			\frac{|y|^{-\alpha p}}{|x-y|^\mu}\,dy
			=
			\mathcal C_{N,\mu,\alpha,p}|x|^{N-\mu-\alpha p}
			\quad \text{for }x\ne0,
		\end{equation}
		where \(\mathcal C_{N,\mu,\alpha,p}\) is defined by
		\eqref{eq:hartree-coefficient}.
	\end{lemma}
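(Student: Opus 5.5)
The plan is to obtain \eqref{eq4.1} from the Riesz composition (semigroup) formula, which is the same Fourier-transform identity used in Lemma~\ref{Lem4.1}.

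First, the integral converges for every \(x\ne0\), and the right-hand side has the stated form. Near \(y=0\) the integrand behaves like \(|y|^{-\alpha p}\), which is integrable since \(\alpha p<N\). Near \(y=x\) it behaves like \(|x-y|^{-\mu}\), integrable since \(\mu<N\). At infinity it decays like \(|y|^{-\alpha p-\mu}\), integrable since \(\mu+\alpha p>N\). Rotation invariance shows the integral is radial in \(x\). The substitution \(y=|x|z\) shows it is homogeneous of degree \(N-\mu-\alpha p\). Hence the integral equals \(c\,|x|^{N-\mu-\alpha p}\), and only the constant \(c\) remains to be identified.

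To identify \(c\), write \(|y|^{-\alpha p}\) and \(|y|^{-\mu}\) as Riesz kernels:
\[
|y|^{-\beta}=\frac{d_{N,\beta}}{(2\pi)^N}\,\mathcal F^{-1}\bigl(|\xi|^{\beta-N}\bigr)\cdot(2\pi)^N/(2\pi)^N .
\]
The convolution of \(|\cdot|^{-\alpha p}\) with \(|\cdot|^{-\mu}\) then has Fourier transform
\[
d_{N,\alpha p}\,d_{N,\mu}\,|\xi|^{\alpha p+\mu-2N}
=\frac{d_{N,\alpha p}\,d_{N,\mu}}{d_{N,\alpha p+\mu-N}}\;\mathcal F\bigl(|x|^{-(\alpha p+\mu-N)}\bigr)(\xi).
\]
The exponent \(\alpha p+\mu-N\) lies in \((0,N)\) precisely because of the hypotheses, so \(d_{N,\alpha p+\mu-N}\) is defined. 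This gives
\[
c=\frac{d_{N,\alpha p}\,d_{N,\mu}}{d_{N,\alpha p+\mu-N}} .
\]
Inserting the formula for \(d_{N,\beta}\) from Lemma~\ref{Lem4.1}, the powers of \(2\) cancel: \(2^{N-\alpha p}\cdot 2^{N-\mu}/2^{2N-\alpha p-\mu}=1\). One factor \(\pi^{N/2}\) survives, and the gamma factors reduce to exactly \(\mathcal C_{N,\mu,\alpha,p}\). Every gamma argument in \eqref{eq:hartree-coefficient} is positive under the hypotheses, so \(\mathcal C_{N,\mu,\alpha,p}>0\).

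The main obstacle is justifying the convolution theorem for these non-integrable kernels. The convolution theorem cannot be applied to them directly. The fix is to work in \(\mathcal S'\): pair the convolution with a Schwartz test function \(\varphi\) and use Fubini, which is legitimate because the integrand is nonnegative and the convolution is locally integrable with the decay computed in the first step. Writing
\[
\langle |\cdot|^{-\mu}*|\cdot|^{-\alpha p},\varphi\rangle=\langle |\cdot|^{-\alpha p},\,|\cdot|^{-\mu}*\varphi\rangle,
\]
one uses the fact that \(\widehat{|\cdot|^{-\mu}*\varphi}=d_{N,\mu}|\xi|^{\mu-N}\widehat\varphi\) holds classically, since \(|\cdot|^{-\mu}\in L^1+L^\infty\). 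One then applies the Parseval identity for \(|x|^{-\alpha p}\) with a test function whose transform has an integrable singularity of type \(|\xi|^{\mu-N}\). This step can be made rigorous by a Gaussian approximation, replacing \(|\cdot|^{-\beta}\) by its heat-kernel subordination \(\Gamma(\beta/2)^{-1}\int_0^\infty t^{\beta/2-1}e^{-t|y|^2}dt\). Alternatively, one can simply cite the Riesz composition formula \(I_a*I_b=I_{a+b}\) for \(a+b<N\) from \cite[Chapter~V]{Stein}, with \(a=N-\mu\) and \(b=N-\alpha p\), after converting between the \(K_\mu\) and \(I_\eta\) normalizations given in Section~\ref{sec:intro}.
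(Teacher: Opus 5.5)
Your proposal is correct and follows essentially the paper's route: both determine the constant from the Fourier transforms of the two Riesz kernels (equivalently, the composition identity \(I_a*I_b=I_{a+b}\) cited from Stein), obtaining \(d_{N,\mu}d_{N,\alpha p}/d_{N,\mu+\alpha p-N}=\mathcal C_{N,\mu,\alpha,p}\). Your homogeneity reduction and the \(\mathcal S'\)/subordination justification spell out what the paper leaves to the citation; note only that your displayed formula for \(|y|^{-\beta}\) in terms of \(\mathcal F^{-1}\) is garbled (stray \((2\pi)^N/(2\pi)^N\) factors), though you never use it.
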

	
	\begin{proof}
		The restrictions \(\alpha p<N\), \(\mu<N\), and
		\(\mu+\alpha p>N\) give integrability near \(y=0\), near \(y=x\), and
		at infinity, respectively. The Riesz convolution formula is
		\[
		\int_{\R^N}
		\frac{1}{|x-y|^a|y|^b}\,dy
		=
		C_{N,a,b}|x|^{N-a-b},
		\]
		valid when
		\[
		0<a<N,
		\qquad
		0<b<N,
		\qquad
		a+b>N.
		\]
		It follows by applying the Fourier transform to the two Riesz kernels and
		using their semigroup identity. With the Fourier convention fixed above,
		\[
		C_{N,a,b}
		=
		\frac{d_{N,a}d_{N,b}}{d_{N,a+b-N}},
		\]
		where \(d_{N,\cdot}\) is the coefficient used in
		Lemma~\ref{Lem4.1}. Taking \(a=\mu\) and \(b=\alpha p\) gives
		\eqref{eq4.1} and the coefficient \eqref{eq:hartree-coefficient}; see
		\cite[Chapter~V, Section~1]{Stein} and \cite{LiebLoss}. Every
		gamma-function argument in
		\eqref{eq:hartree-coefficient} is positive under the stated restrictions.
	\end{proof}
	
	\begin{proof}[Proof of Theorem~\ref{Thm1.3}]
		By Lemma~\ref{Lem4.1},
		\[
		A(-\Delta)^s |x|^{-\alpha}
		=
		A\Lambda_{N,s}(\alpha)|x|^{-\alpha-2s}.
		\]
		By Lemma~\ref{Lem4.2},
		\[
		\Hs[u_*](x)
		=
		A^p\mathcal C_{N,\mu,\alpha,p}|x|^{N-\mu-\alpha p}.
		\]
		Therefore
		\[
		\Hs[u_*](x)u_*^q(x)
		=
		A^{p+q}\mathcal C_{N,\mu,\alpha,p}|x|^{N-\mu-\alpha(p+q)}.
		\]
		The equality \((-\Delta)^s u_*=\Hs[u_*]u_*^q\) holds for every
		\(x\ne0\) if and only if
		\[
		-\alpha-2s=N-\mu-\alpha(p+q)
		\]
		and
		\[
		A\Lambda_{N,s}(\alpha)
		=
		A^{p+q}\mathcal C_{N,\mu,\alpha,p}.
		\]
		The exponent identity is precisely \eqref{eq:hom-alpha}, and the coefficient
		identity is \eqref{eq:hom-coefficient}. Since \(p+q-1>0\) and both
		coefficients are positive, \eqref{eq:hom-coefficient} has exactly one
		positive solution \(A\).
		Moreover, \(0<\alpha<N-2s<N\) gives \(u_*\in L^1_{\loc}(\R^N)\), and
		\[
		\int_{\R^N}\frac{u_*(x)}{1+|x|^{N+2s}}\,dx<+\infty .
		\]
		Thus \(u_*\in\Ls(\R^N)\), and Lemma~\ref{Lem4.2} makes its Hartree
		potential finite away from the origin.
		
		For a general homogeneous ansatz \(B|x|^{-\beta}\) under the stated
		convergence conditions, the same calculation first forces
		\[
		-\beta-2s=N-\mu-\beta(p+q),
		\]
		so \(\beta=\alpha\), and then forces \(B=A\).
	\end{proof}
	
	\begin{remark}
		\label{rem:homogeneous-limit}
		Assume the parameter hypotheses of Theorem~\ref{Thm1.3}.
		Let \(u\) be a positive punctured solution and let
		\(\alpha\) be given by \eqref{eq:hom-alpha}. Suppose that, for a sequence
		\(r_n\to0^+\),
		\[
		r_n^\alpha u(r_nx)\longrightarrow U(x)
		\quad\text{locally in }\R^N\setminus\{0\}.
		\]
		If it is additionally known that \(U\) is nonzero, positive, radial, and
		homogeneous of degree \(-\alpha\), in the explicit sense that
		\[
		U(tx)=t^{-\alpha}U(x)
		\quad\text{for every }t>0\text{ and }x\in\R^N\setminus\{0\},
		\]
		that \(U\) satisfies the limiting Hartree equation, and that \(\alpha\)
		satisfies \eqref{eq:hom-convergence}, then Theorem~\ref{Thm1.3} gives
		\[
		U(x)=A|x|^{-\alpha},
		\]
		with \(A\) determined by \eqref{eq:hom-coefficient}. 
	\end{remark}
	
	\begin{remark}
		The restrictions in \eqref{eq:hom-convergence} are sharp for the calculation used here.
		At \(\alpha=0\), the ansatz is constant and its fractional Laplacian
		vanishes. At \(\alpha=N-2s\), it is the fundamental solution and its
		fractional Laplacian vanishes away from the puncture. The endpoint
		\(\alpha p=N\) gives a logarithmic divergence at the origin, while
		\(\mu+\alpha p=N\) gives a logarithmic divergence at infinity. If
		\(\alpha p\le0\) or \(\alpha p>N\), the Hartree integral also fails at
		infinity or at the origin, respectively. Finally, the exponent balance
		for a singular radial homogeneous ansatz \(B|x|^{-\beta}\), with
		\(\beta>0\), can be written as
		\[
		\beta(p+q-1)=N-\mu+2s>0.
		\]
		Consequently, if \(p+q\le1\), no positive radial homogeneous solution
		\(B|x|^{-\beta}\), with \(\beta>0\), exists under the stated convergence
		conditions.
	\end{remark}
	
	\section*{Acknowledgments}
	This work is supported by National Natural Science Foundation of China (12301145, 12561020, 12261107) and Yunnan Fundamental Research Projects (202401AU070123, 202601AT070048).
	
	\medskip
	{\bf Author Contributions:} All authors contributed equally to the writing and preparation of the manuscript.
	
	\medskip
	{\bf Data availability:}  Data sharing is not applicable to this article as no new data were created or analyzed in this study.
	
	\medskip
	{\bf Conflict of Interests:} The authors declare that they have no conflict of interest.

\end{document}